
\documentclass[11pt,oneside]{amsart}
\usepackage{amsmath}
\usepackage{amsfonts}
\usepackage{amssymb}
\usepackage{amsthm}
\usepackage[colorlinks]{hyperref} % for coloured links
\usepackage{wasysym}
\usepackage{subcaption}
\usepackage{graphicx}
\usepackage{braket}
\usepackage{dashrule}
\usepackage[all]{xy}
\usepackage{multirow}
\usepackage{enumitem}
\usepackage{tikz}
\usetikzlibrary{arrows,decorations.pathmorphing,backgrounds,positioning,fit,calc}
\usepackage{tikz-cd}
\usepackage{framed}

\usepackage{latexsym}
\usepackage{stix}

%%% Change defualt font to Palatino
\normalfont\upshape

%%% Definition, Theorems
\theoremstyle{plain}

\numberwithin{equation}{section}

\theoremstyle{remark}

\DeclareMathOperator{\spn}{Span}

\newcommand{\nocontentsline}[3]{}
\newcommand{\tocless}[2]{\bgroup\let\addcontentsline=\nocontentsline#1{#2}\egroup}
% Used to remove a section from the table of contents

% general

\newcommand{\NN}{\mathbb{N}}

%\newcommand{\GGm}{\mathbb{C}^{\times}}

 %{\! \mathbin{\text{\rotatebox[origin=c]{70}{\scalebox{1.2}{$\approx$}}}} \!}

%\newcommand{\env}{{/ / \kern-0.64em{e}}}

%\newcommand{\tkimp}{(T^*K)_{{\rm impl}}}

%\newcommand{\bmnd}{\calm(n,d)}
%\newcommand{\liek}{{\liek}}
%\newcommand{\lieks}{{\liek}^*}

%\newcommand{\diag}{{\rm diag}}
%\newcommand{\proj}{{\rm proj}}

%\input amssym.def
%\input amssym.tex
\newcommand{\nc}{\newcommand}
\nc{\bla}{\phantom{bbbbb}}

\newcommand{\beq}{\begin{equation}}
\newcommand{\eeq}{\end{equation}}
\newcommand{\barr}{\begin{array}}
\newcommand{\earr}{\end{array}}
\newcommand{\beqar}{\begin{eqnarray}}
\newcommand{\eeqar}{\end{eqnarray}}
\newtheorem{theorem}{Theorem}[section]
\newtheorem{corollary}[theorem]{Corollary}
\newtheorem{lemma}[theorem]{Lemma}
\newtheorem{prop}[theorem]{Proposition}
\newtheorem{defn}[theorem]{Definition}
\newtheorem{remark}[theorem]{Remark}

\newtheorem{exit}[theorem]{Example}

\newtheorem{thm}[theorem]{Theorem}

\newenvironment{rem}{\begin{remark}\rm}{\end{remark}}

%\newenvironment{defn}{\begin{definition}\rm}{\end{definition}}

% black board bold face
\newcommand{\RR}{{\mathbb R }}
\nc{\FF}{ {\mathbb F} }
\nc{\HH}{ {\mathbb H} }
\newcommand{\ZZ}{{\mathbb Z }}
%\newcommand{\PP}{ {\mathbb P } }

%\newcommand{\GG}{{\mathbb G }}

% calligraphic letters

%%% enveloping quotient symbol attempts:

% \newcommand{\foo}{\raisebox{-0.7ex}{\kern0.3ex{\turnbox{70}{\hdashrule[0ex][x]{3.1ex}{0.8pt}{0.6ex
%           0.4ex}}   
% \turnbox{70}{\hdashrule[0ex][x]{3.1ex}{0.8pt}{0.6ex 0.4ex}}} \kern0.5ex}
% }

% \newcommand{\foo}{\raisebox{-0.5ex}{\kern1pt{\turnbox{70}{\hdashrule[0ex][x]{16pt}{0.8pt}{3pt 2pt}}  
% \turnbox{70}{\hdashrule[0ex][x]{16pt}{0.8pt}{3pt 2pt}}} \kern2pt}
% }

% \newcommand{\lenv}{\mathbin{\text{\rotatebox[origin=c]{-20}{$ \kern-0.3em
% \rbag \! \rbag \kern-0.3em$}}}}

% \newcommand{\lenv}{\! \mathbin{\text{\rotatebox[origin=c]{-20}{$\downharpoonleft 
% \! \upharpoonright$}}} \!}

% \newcommand{\senv}{\! \mathbin{\text{\rotatebox[origin=c]{-20}{\scalebox{1.2}{$\downharpoonleft 
% \!  \upharpoonright$}}}} \!}

 %extra brackets to force \mathord

\newcommand{\grad}{{\text{grad}}}

\nc{\umax}{{U_{\max}}}

\newcommand{\liek}{{\mathfrak k}}

\newcommand{\lieks}{{\liek}^*}

\nc{\lieq}{{\mathfrak q}}
\nc{\liez}{{\mathfrak z}}
\nc{\lieqs}{{\lieq}^*}
\nc{\lieg}{{\mathfrak g}}
\nc{\liegs}{{\lieg}^*}
\nc{\liep}{{\mathfrak p}}
\nc{\lieps}{{\liep}^*}

\newcommand{\codim}{{\text{codim}}}
%***************************

%Replace greek letters by their roman equivalents with \
%Slightly nonstandard:  theta is \t, tau is \ta, no omicron

%\def\i{\iota}

%\def\u{\upsilon}

%\def\S{\Sigma}

\setlength{\textwidth}{6.3in}
\setlength{\textheight}{8.3in}
\setlength{\evensidemargin}{0.2in}
\setlength{\oddsidemargin}{0.2in}

\title{Morse theory without nondegeneracy}

\author{Frances Kirwan and Geoffrey Penington}

\address[Kirwan]{Mathematical Institute\\ Oxford University\\ Woodstock Road, Oxford OX2 6GG\\  UK} 
\email{kirwan@maths.ox.ac.uk}

\address[Penington]{Department of Physics\\
University of California\\ % at Berkeley\\ 366 LeConte Hall, MC 7300\\
Berkeley, CA 94720-7300, USA}
\email{geoffp@berkeley.edu}

\begin{document}

\begin{abstract}
We describe an extension of Morse theory to smooth functions on compact Riemannian manifolds, without any nondegeneracy assumptions except that the critical locus must have only finitely many connected components.
\end{abstract}

\maketitle

%\section{Introduction}

Let $M$ be a compact manifold (without boundary). Classical Morse theory \cite{Milnor} studies the topology of $M$ using a smooth real-valued function $f \colon M\to \RR$ on $M$ with nondegenerate (and therefore isolated)  critical points; the Morse inequalities relate the Betti numbers of $M$ to the numbers of critical points of $f$ with given Morse index. It is well known that it is possible to relax this nondegeneracy assumption, for example to allow the connected components of the critical locus to be submanifolds with nondegeneracy in the normal directions, so that $f$ is a Morse--Bott function  \cite{Bott1,Bott}. Our aim here is to remove the hypothesis of nondegeneracy, except for the much weaker requirement that the critical locus of $f$ has only finitely many connected components (or even that $f$ has only finitely many critical values: see Remark \ref{rem:OS} below). 

Morse theory has been generalised in many different ways in the decades since its introduction (cf.\! for example \cite{BH1, Farber, Jost, Nicolaescu, Shubin,  Wehrheim, Witten82}), and there are several different approaches used to obtain Morse inequalities for suitable smooth functions $f \colon M \to \RR$.  In the main the different approaches involve the choice of a Riemannian metric $g$ on $M$, and the associated gradient flow $\grad(f) $ of $f$, or at least some sort of pseudo-gradient vector field (and from this viewpoint Conley's index theory \cite{Conley, Salamon} is more general still, studying smooth flows which are not necessarily gradient flows).

(i) {\bf Attaching handles}. The original approach of Morse \cite{Milnor} for $f \colon M\to \RR$ with nondegenerate critical points was to study the topology of $f^{-1}(-\infty, a]$ as $a$ varies in $\RR$, and to show that this is unchanged as $a$ increases except when $a$ passes through a critical value, when a handle is attached for each critical point $p$ with $f(p)=a$.

(ii) {\bf Morse stratifications}. An approach which has been used to extend Morse theory to suitable smooth functions with non-isolated critical points \cite{Bott, Bottindom, K}  is to stratify $M$ according to the limiting behaviour of the gradient flow of $f$. Thus we get $M = \bigsqcup_{0\leqslant j \leqslant k} S_j$, where each $S_j$ is a locally closed submanifold of $M$ which retracts onto its intersection $C_j$ with the critical set $\text{Crit}(f)$ for $f$, and where $U_j =  \bigsqcup_{0\leqslant i \leqslant j} S_i$ is open in $M$.
For any field $\FF$, and under suitable orientability assumptions (which can be ignored if we use $\ZZ/2\ZZ$ coefficients), the Thom--Gysin sequences 
$$ \cdots \to H_{i +1 - \codim S_j }(S_j;\FF) \to
 H_i(U_{j-1};\FF)  \to  H_i(U_j;\FF) \to H_{i - \codim S_j }(S_j;\FF) \to  \cdots
$$
associated to the inclusions of $U_{j-1} = U_j \setminus S_j $ into $U_j$  (for $1 \leqslant j \leqslant k$),  combined with  homeomorphisms from neighbourhoods of $S_j$ in $U_j$ to neighbourhoods of the zero section in the normal bundles to the strata $S_j$, can be used to derive  Morse inequalities of the form
\begin{equation} \label{Morseineq} P_t(M) = \sum_{j=0}^k t^{\codim S_j} P_t(C_j)  \,\, - \,\, (1+t)R(t)
\end{equation}
where $P_t(M) = \sum_{i \geqslant 0} t^i \dim_\FF H_i(M;\FF)$ is the Poincar\'e polynomial of $M$ and $R(t)$ is a polynomial with nonnegative coefficients. Here $\codim S_j = \text{ind}_f(C_j)$ is the Morse index of $f$ along $C_j$.

(iii) {\bf Morse homology}. An approach which goes back to Milnor, Thom and Smale and which was reinvigorated by Witten \cite{Bismut,Smale,Smale2,MSchwarz,Witten82} is to use a Morse function $f \colon M \to \RR$ and a suitable Riemannian metric $g$ on $M$ to derive the Morse inequalities by defining a complex 
$$ \cdots \xrightarrow{\partial} C_{i+1}^{\text{Morse}}(f,g) \xrightarrow{\partial} C_{i}^{\text{Morse}}(f,g) \xrightarrow{\partial} C_{i-1}^{\text{Morse}}(f,g) \xrightarrow{\partial} \cdots
$$
in terms of the critical set $\text{Crit}(f)$ and showing that its homology is isomorphic to the homology of $M$. 
 Here Hodge theory can be used to describe the (de Rham) cohomology  $H^j(M;\RR) = (H_j(M;\RR))^*$ in terms of harmonic forms on $M$. Motivated by ideas from supersymmetry, Witten used the smooth function $f \colon M \to \RR$ to defined a modification $\Delta_t$ (depending on $t \in \RR$) of the Laplacian $\Delta$, with $\Delta_0 = \Delta$, and studied the spectrum of $\Delta_t$ as $t \to \infty$. Solutions to $\Delta_t = 0$ as $t \to \infty$ localise where $\text{d}\!f$ is small, i.e. near $\text{Crit}(f)$, and tunnelling effects lead to the definition of the Morse--Witten complex.

These approaches to the Morse inequalities 
have been extended in different ways from classical Morse (and Morse--Smale) functions to Morse--Bott and minimally degenerate functions \cite{AustinB, BH, Bott, Bottindom, Frau, K,ZZ}, to Novikov inequalities for closed 1-forms \cite{ BF,BF2,BF3,BF4,Novikov, Novikov2, Pazhitnov}, to allow $M$ to have boundary \cite{Akaho,BNR,ChangLiu, KM, Laudenbach} and in suitable circumstances  to be non-compact or infinite-dimensional \cite{AM, AD, BF,BF2, BF3, BF4, BS, Dask, EG, FS, Floer1, Floer2,Fukaya,Salamon}; some approaches using stratifications do not require $M$ to be a manifold \cite{GM,Quinn,Wilkin,Woolf}, and discrete versions of Morse theory (cf. \cite{Forman,KK,Nanda, Nanda2}) have also been studied.
For the main results in this article we will assume that $M$ is a (finite-dimensional) compact  Riemannian manifold without boundary, though we will briefly consider other situations. However our only hypothesis  on $f \colon M \to \RR$ will be that it is smooth and 
 that the critical locus $\text{Crit}(f)$ has only finitely many connected components; then its set of critical values $\text{Critval}(f)= f(\text{Crit}(f))$ is a finite union of compact connected subsets of $\RR$ and by Sard's theorem has measure zero, so is finite. %Indeed the converse is also true, by Lemma 3.8 of \cite{OS}: if $f$ has only finitely many critical values then $\text{Crit}(f)$ has only finitely many connected components.

In order to obtain such a generalisation of the classical Morse inequalities, we need more ingredients than those in (\ref{Morseineq}). We will use a \lq system of Morse neighbourhoods' for $f$ in the sense described below (or slightly more generally as given in Definition \ref{Mn}). For any smooth $f \colon M \to \RR$ it is possible to choose a system of Morse neighbourhoods (see Proposition \ref{Mnexist} below), and the resulting inequalities will be independent of this choice.

\begin{figure}[t]
\begin{subfigure}{0.45\textwidth}
\includegraphics[width = 0.8\linewidth]{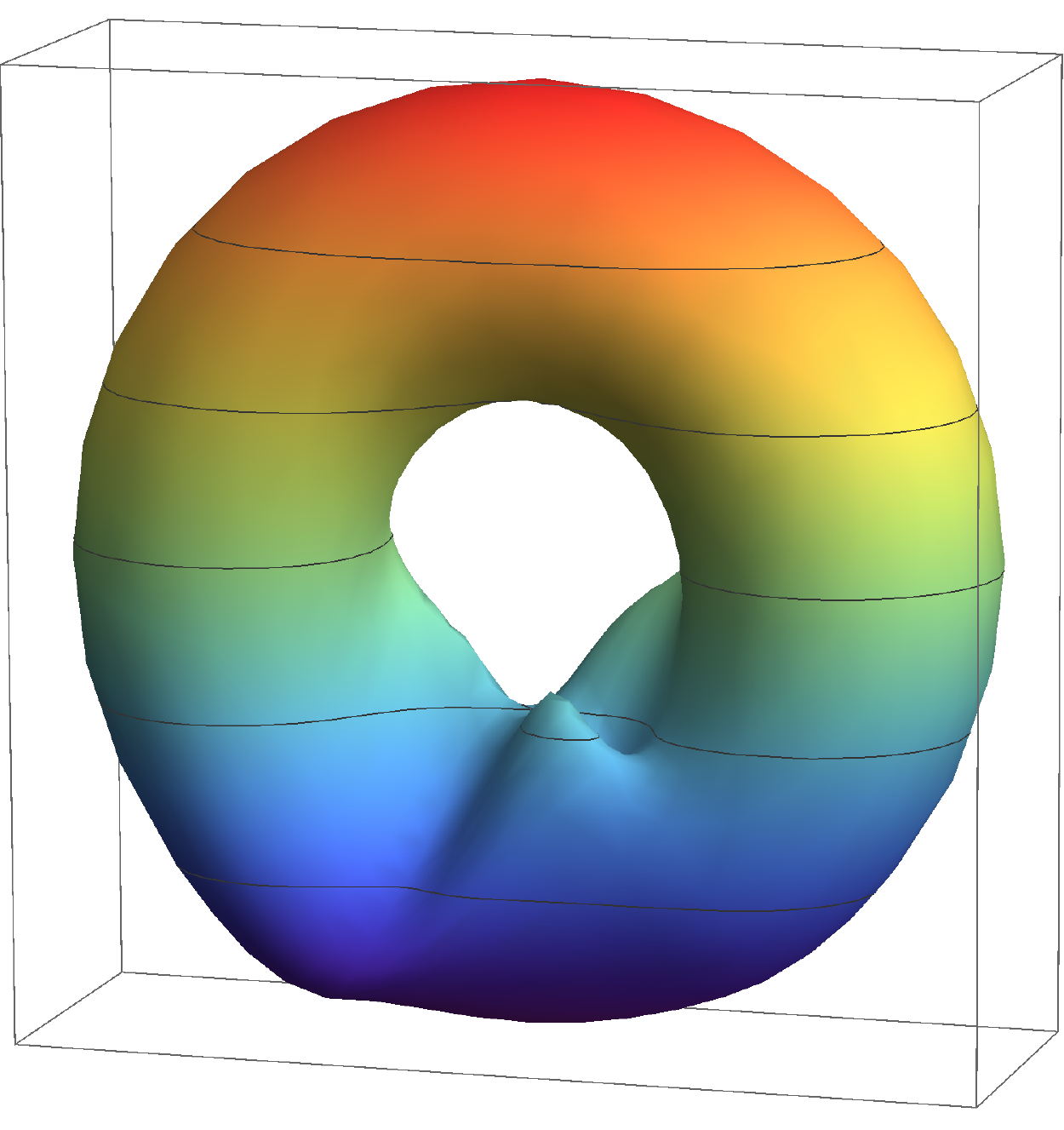}
\centering
\end{subfigure}
\begin{subfigure}{0.45\textwidth}
\includegraphics[width = 0.8\linewidth]{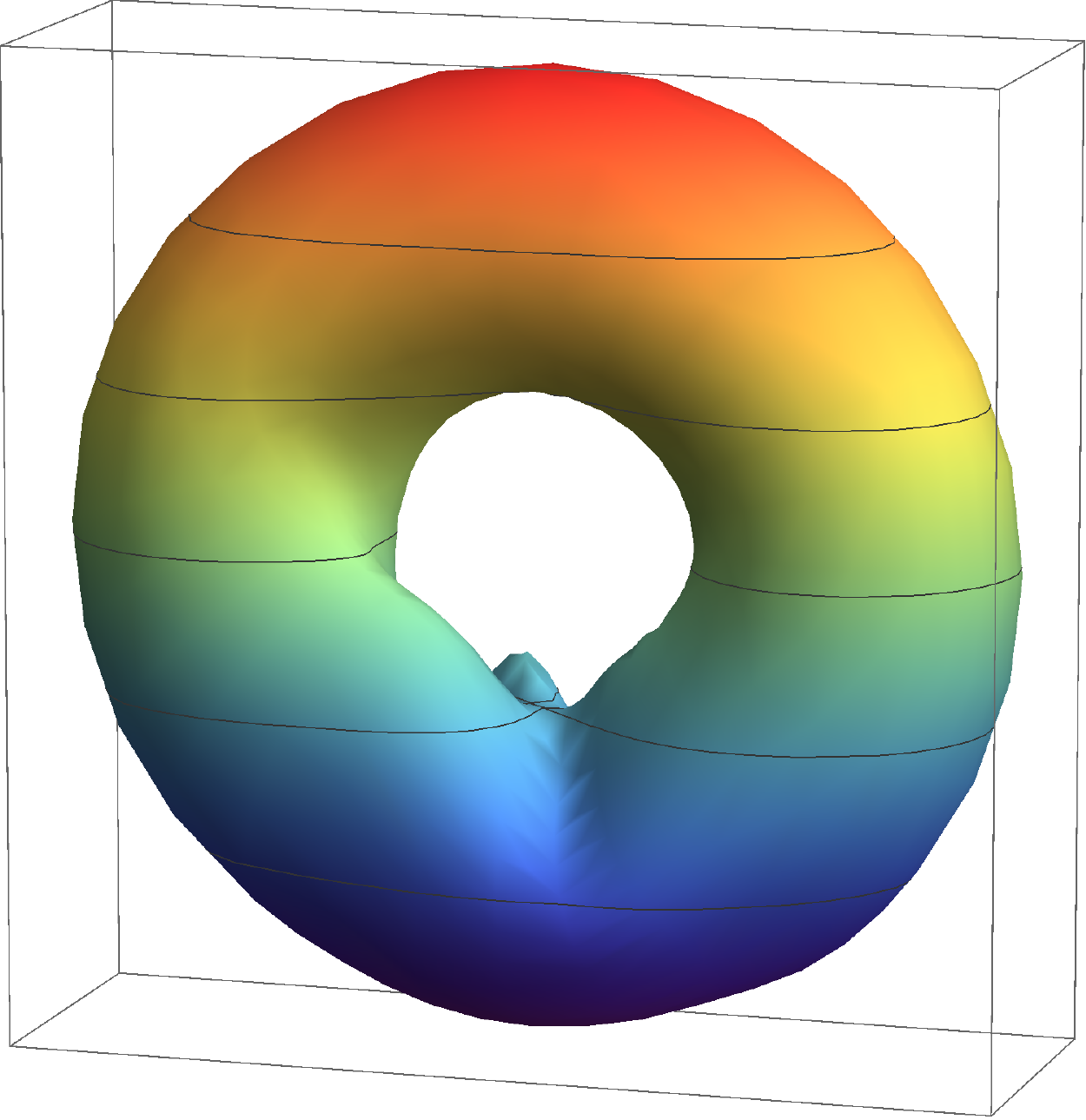}
\centering
\end{subfigure}
\caption{The height function on this torus in $\RR^3$ has two local maxima, one local minimum, one saddle point and a degenerate critical point given locally by $f(x,y) = xy(x-y)$. The right-hand figure is the same torus, viewed from the reverse side.}
\label{fig:funnytorus}
\end{figure}

So let $f \colon M \to \RR$ be a smooth function whose critical locus $\text{Crit}(f)$ has finitely many connected components.  For $c \in \text{Critval}(f)$ let $\text{Crit}_c(f)= f^{-1}(c) \cap \text{Crit}(f)$ and let $\mathcal{D}_c$ be the set of its connected components. 
Then $\mathcal{D} = \bigcup_{c \in \text{Critval}(f)} \mathcal{D}_c$ is the set of connected components of the critical set $\text{Crit}(f)$.
A system of (strict) Morse neighbourhoods for $f$ is given by
$$ \{ \mathcal{N}_{C,n} : C \in \mathcal{D} , \,\,\, n \geqslant 0 \} $$
such that 
if $C \in \mathcal{D}_c$ and $ n \geqslant 0$ then 

(a) $\mathcal{N}_{C,n}$ is a neighbourhood of $C$ in $M$ containing no other critical points for $f$, with $\mathcal{N}_{C,n+1} \subseteq (\mathcal{N}_{C,n})^{\circ}$ (where $(\mathcal{N}_{C,n})^{\circ} = \mathcal{N}_{C,n} \setminus  \partial \mathcal{N}_{C,n}$ is the interior of $\mathcal{N}_{C,n}$) and $\bigcap_{m \geqslant 0} \mathcal{N}_{C,m} = C$;

(b) $\mathcal{N}_{C,n}$ is a compact submanifold of $M$ with corners (locally modelled on $[0,\infty) ^2 \times \RR^{\dim M -2}$) and has boundary
$$\partial \mathcal{N}_{C,n} = \partial_+ \mathcal{N}_{C,n} \cup \partial_- \mathcal{N}_{C,n}$$
where $\partial_\pm \mathcal{N}_{C,n}$ is a compact submanifold of $M$ with boundary and
$$ \partial(\partial_+\mathcal{N}_{C,n}) \, = \, \partial(\partial_- \mathcal{N}_{C,n}) \, = \, (\partial_+ \mathcal{N}_{C,n}) \cap  (\partial_- \mathcal{N}_{C,n}) \,\,  \subseteq \, f^{-1}(c);$$

(c)  the gradient vector field $\grad(f)$ on $M$ associated to its Riemannian metric $g$  satisfies\\
(i) $(\partial_+ \mathcal{N}_{C,n})^\circ = \partial _+ \mathcal{N}_{C,n} \setminus  \partial(\partial_+ \mathcal{N}_{C,n})  \subseteq f^{-1}(c,\infty)$ with the restriction of $\grad (f)$ to $(\partial_+ \mathcal{N}_{C,n})^\circ$ pointing inside $\mathcal{N}_{C,n}$, and  \\
(ii) $(\partial_- \mathcal{N}_{C,n})^\circ = \partial _- \mathcal{N}_{C,n} \setminus  \partial(\partial_-\mathcal{N}_{C,n})  \subseteq f^{-1}(-\infty,c)$ with the restriction of $\grad (f)$ to $(\partial_- \mathcal{N}_{C,n})^\circ$ pointing outside $\mathcal{N}_{C,n}$.

An example of such Morse neighbourhoods is shown in Figure \ref{fig:strict} in $\S$\ref{sec:neighbourhoods}. Let $P_t(\mathcal{N}_{C,n},\partial_\pm \mathcal{N}_{C,n})$ denote the relative Poincar\'e polynomial 
 $ \sum_{i \geqslant 0} t^i \dim_\FF H_i(\mathcal{N}_{C,n},\partial_\pm \mathcal{N}_{C,n};\FF)$. 
We will see that the gradient flow $\grad(f)$, combined with excision, induces isomorphisms of relative homology
$$\phi^{m,n}_i : H_i(\mathcal{N}_{C,n},\partial_\pm \mathcal{N}_{C,n};\FF) \to H_i(\mathcal{N}_{C,m},\partial_\pm \mathcal{N}_{C,m};\FF)$$
for $m>n$.  

\begin{rem}
Each $C \in \mathcal{D}$ is an isolated invariant set in the sense of Conley \cite{Conley} for the gradient flow. Each neighbourhood $\mathcal{N}_{C,n}$ is an isolating neighbourhood of $C$ in Conley's sense and the pair $(\mathcal{N}_{C,n}, \partial_-\mathcal{N}_{C,n})$ is an index pair. The homotopy type of this pair represents the Conley index of $C$. It is shown in \cite{Conley} that this index, and hence its homology $H_*(\mathcal{N}_{C,n},\partial_- \mathcal{N}_{C,n};\FF)$, is independent of the choice of isolating neighbourhood.
\end{rem}

By taking the limit as $m \to \infty$ we can define vector spaces 
\begin{equation} \label{Hlimit} H_i(\mathcal{N}_{C,\infty},\partial_\pm \mathcal{N}_{C,\infty};\FF) \end{equation} with isomorphisms 
$\phi^{\infty,n}_i : H_i(\mathcal{N}_{C,n},\partial_\pm \mathcal{N}_{C,n};\FF) \to H_i(\mathcal{N}_{C,\infty},\partial_\pm \mathcal{N}_{C,\infty};\FF)$
compatible with $\phi^{m,n}_i$ for all $m,n,i$.
Let $$P_t(\mathcal{N}_{C,\infty},\partial_\pm \mathcal{N}_{C,\infty}) =  \sum_{i \geqslant 0} t^i \dim_\FF H_i(\mathcal{N}_{C,\infty},\partial_\pm \mathcal{N}_{C,\infty};\FF)
=  \sum_{i \geqslant 0} t^i \dim_\FF H_i(\mathcal{N}_{C,n},\partial_\pm \mathcal{N}_{C,n};\FF)
$$ 
for any $n \geqslant 0$. 
Our main theorem, Theorem \ref{mainthm}, combined with Proposition \ref{Mnexist}, tells us that for each $C \in \mathcal{D}$ the vector space 
$H_i(\mathcal{N}_{C,\infty},\partial_\pm \mathcal{N}_{C,\infty};\RR)$, up to canonical isomorphism (and hence also the Poincar\'e polynomial $P_t(\mathcal{N}_{C,\infty},\partial_\pm \mathcal{N}_{C,\infty})$),  is independent of the choice of system of strict Morse neighbourhoods and of the Riemannian metric on $M$, 
and that 
$M$  satisfies the descending Morse inequalities
$$  P_t(M) = \sum_{C \in \mathcal{D}}  P_t(\mathcal{N}_{C,\infty},\partial_- \mathcal{N}_{C,\infty})  \, - \, (1+t)R_{\downarrow}(t)\,\,\,\, \,\,\,  \mbox{ where } R_{\downarrow}(t) \geqslant 0
$$
and 
 the ascending Morse inequalities
$$  P_t(M) = \sum_{C \in \mathcal{D}}  P_t(\mathcal{N}_{C,\infty},\partial_+\mathcal{N}_{C,\infty})  \, - \, (1+t)R_{\uparrow}(t)\,\,\,\, \,\,\,  \mbox{ where } R_{\uparrow}(t) \geqslant 0.
$$
Here we write $R(t) \geqslant 0$ when all the coefficients of the polynomial $R(t)$ are non-negative.

When $f$ is Morse--Bott then the normal bundle to $C \in \mathcal{D}$ decomposes as $TM|^+_C \oplus TM|^-_C$ where the Hessian of $f$ is positive definite on $TM|^+_C$ and negative definite on $TM|^-_C$. We can take $\mathcal{N}_{C,n}$ to be the image under the exponential map determined by the Riemannian metric of a product of disc bundles in $TM|^+_C$ and $TM|^-_C$, and then $\partial_\pm\mathcal{N}_{C,n}$ is the product of one disc bundle with the boundary of the other. If these bundles are orientable then $H_i(\mathcal{N}_{C,n},\partial_\pm \mathcal{N}_{C,n};\RR)$ is isomorphic to $H_{i-\text{rank}(TM|^\mp_C)}({C};\RR)$ and we recover the Morse inequalities (\ref{Morseineq}).

\begin{rem} When $M$ is oriented then by Poincar\'e duality
$P_t(M) = t^{\dim M} P_{(1/t)}(M)$, and it follows from Alexander-Spanier
duality (cf. \cite{Hatcher} Theorem 3.43) that  $P_t(\mathcal{N}_{C,n},\partial_- \mathcal{N}_{C,n})$ is equal to $ t^{\dim M} P_{(1/t)}(\mathcal{N}_{C,n},\partial_+ \mathcal{N}_{C,n})$ for any $n \geqslant 0$. Hence 
$P_t(\mathcal{N}_{C,\infty},\partial_- \mathcal{N}_{C,\infty}) =t^{\dim M}  P_{(1/t)}(\mathcal{N}_{C,\infty},\partial_+ \mathcal{N}_{C,\infty})$, and
so the descending and ascending Morse inequalities are equivalent. 
\end{rem}

\begin{rem} \label{rem:OS}
For our proof of the Morse inequalities to hold we can weaken the assumption that there are only finitely many connected components of the critical locus of $f$. It is enough to assume that its set of critical {values} $\text{Critval}(f)= f(\text{Crit}(f))$ is an isolated (or equivalently finite) subset of $\RR$; then our proof shows that the Morse inequalities are true when $\mathcal{D}$ is replaced with $\{ \text{Crit}(f) \cap f^{-1}(c) : c \in \text{Critval}(f) \}$. 
Indeed since the first version of this paper was written it has been proved in \cite{OS} Lemma 3.8 that if $\text{Critval}(f)$ is finite then so is the set $\mathcal{L}(c)$ of connected components of $f^{-1}(c)$ for any $c \in \text{Critval}(f)$; using this our proof shows that the Morse inequalities are true when $\mathcal{D}$ is replaced with $\{ \text{Crit}(f) \cap L :  L \in  \mathcal{L}(c)  \mbox{ for some } c \in \text{Critval}(f)\}$.

The paper \cite{OS} studies the Reeb space of a smooth function $f:M \to \RR$ with $\text{Critval}(f)$ finite. It shows that the Reeb space, which is the space of connected components of the level sets of $f$ endowed with the natural quotient topology, is a finite (multi-)graph whose vertices correspond to those connected components which meet $\text{Crit}(f)$ of level sets of $f$. The resulting Reeb graph is related to but not in general the same as the quiver (or directed graph) we will associate to $f$ together with the Riemannian structure on $M$. It is shown in \cite{OS} that the Reeb graph can be decorated by associating to each edge the diffeomorphism type of a closed connected manifold of dimension $\dim M -1$ which is a connected component of a level set, and associating to each vertex the diffeomorphism type of a compact connected manifold with boundary which is a connected component of $f^{-1}[c-\epsilon, c+ \epsilon]$ for some $c \in \text{Critval}(f)$ and sufficiently small $\epsilon >0$. Moreover any such decorated graph, with suitable compatibility conditions on the decorations, can be realised as the decorated Reeb graph of a smooth function $f: M \to \RR$ with $\text{Critval}(f)$ finite for some $M$, and if $M$ is fixed then any continuous map from $M$ to a connected acyclic (multi-)graph inducing a surjection on fundamental groups is homotopic to the quotient map to the Reeb graph of a smooth function  $f: M \to \RR$ with $\text{Critval}(f)$ finite.
\end{rem}

We will see that the Morse inequalities provided by Theorem \ref{mainthm} can be proved in several different ways which generalise the different approaches to the classical Morse inequalities. In particular they follow from a 
spectral sequence of multicomplexes 
 which refines the information given by the vector spaces $H_i(\mathcal{N}_{C,\infty},\partial_\pm \mathcal{N}_{C,\infty};\FF)$ and in the Morse--Smale situation reduces to the Morse--Witten complex. 

When $(f,g)$ is Morse--Smale then there is an associated graph $\Gamma$ (more precisely a multi-digraph or quiver) embedded in $M$, with vertices given by the critical points of $f$ and arrows from  a critical point $p$ of index $i$ to a critical point $q$ of index $i-1$ given by the (finitely many) gradient flow lines from $p$ to $q$ (cf. for example \cite{CohenNorbury}). The Morse--Witten complex is the differential module with basis $\text{Crit}(f)$ and differential 
$$\partial p = \sum_{q \in \text{Crit}(f)} n(p,q) q $$
where $n(p,q)$ is the number of flow lines from $p$ to $q$, counted with signs determined by suitable choices of local orientations. It is usually regarded as a complex via the grading by index, but we can also regard it as a differential module supported on the quiver $\Gamma$, where $\Gamma$ is graded by the function $f$. From the viewpoint of the representation theory of quivers the Morse--Witten complex is a representation of $\Gamma$ over $\FF$ such that each vertex is represented by a copy of $\FF$, each arrow by the identity on $\FF$ (up to a suitable sign) and the sum of all the arrows is the differential $\partial$. Equivalently we can replace multiple arrows from $p$ to $q$ with just one, represented by the number of flow lines from $p$ to $q$ counted with signs. When we grade this representation using the homological degree and the critical value instead of the index, then we have a multicomplex supported on $\Gamma$. 

In our more general situation we can define a quiver $\Gamma$ with vertices $\{v_C:C \in \mathcal{D}\}$ labelled by the connected components $C$ of $\text{Crit}(f)$, and an arrow from $C_+$ to $C_-$ for each connected component of 
$$\{ x \in M : \{ \psi_{t}(x): t \leqslant 0\} \mbox{  has a limit point in $C_+$ and } \{ \psi_{t}(x): t \geqslant 0\} \mbox{  has a limit point in $C_-$} \}$$
when this subset is closed in $f^{-1}(f(C_-),f(C_+))$. Here $\psi_t(x)$  describes the downwards gradient flow for $f$ from $x$ at time $t$. 

This quiver  $\Gamma$ equipped with the real-valued function $v_C \mapsto f(C)$ on its set of vertices is an $\RR$-quiver  in the sense that $\Gamma$ is graded by this function: if there is an arrow in $\Gamma$ from $v_{C_+}$ to $v_{C_-}$ then $f(C_+) > f(C_-)$. However the quiver gives us more information about the existence of gradient flow lines from $C_+$ to $C_-$ than the requirement that $f(C_+) > f(C_-)$: such gradient flow lines can only exist if there is a path in the quiver $\Gamma$ from $v_{C_+}$ to $v_{C_-}$.

The analogues of Morse--Witten complexes are in general given by %multicomplexes supported on the quivers $\Gamma$ with homology $H_*(M;\FF)$, but instead by 
spectral sequences %on such multicomplexes
 abutting to $H_*(M;\FF)$, 
with $E_1$ pages given by multicomplexes supported on the quivers $\Gamma$ such that a vertex $v_C$ is represented  by the vector space
$H_*(\mathcal{N}_{C,\infty},\partial_- \mathcal{N}_{C,\infty};\FF)$; in the Morse--Smale case described above the spectral sequence degenerates after the $E_1$ page.

The layout of this paper is as follows. In $\S$\ref{sec:neighbourhoods} 
we study systems of Morse neighbourhoods and give a first proof of the main theorem, Theorem \ref{mainthm}; this generalises the proof of the classical Morse inequalities via attaching handles. In $\S$\ref{sec:witten}  we describe another approach using Witten's deformation technique which was the original motivation for this project. 
 Morse stratifications and Morse covers with associated spectral sequences are defined in $\S$\ref{sec:stratifications} and $\S$\ref{sec:spectral}, giving  further proofs of the Morse inequalities, and the quiver $\Gamma$ described above is introduced.
 In $\S$\ref{sec:quivers} we consider %spectral sequences of
  multicomplexes supported on acyclic quivers and generalise the spectral sequences constructed in $\S$\ref{sec:spectral}.  
%  multicomplexes with homology $H_*(M;\FF)$ and total complex given by the sum of $H_*(\mathcal{N}_{C,\infty},\partial_\pm \mathcal{N}_{C,\infty};\FF)$ for  $C \in \mathcal{D}$. 
 In $\S$\ref{sec:multi} we study these further and in particular consider their relationship with the quivers and multicomplexes associated to Morse--Smale perturbations of a smooth function $f:M \to \RR$ whose critical locus has finitely many connected components. $\S$\ref{sec:examples} considers some examples, including the height function on a torus in $\RR^3$ pictured in Figure \ref{fig:funnytorus} and a related function on a compact oriented surface of genus two. Finally
$\S$\ref{sec:applications} discusses some possible extensions of our results. % to smooth functions on manifolds with boundary or corners, allowing singular  or infinite-dimensional spaces, the use of equivariant homology and generalisations of the Novikov inequalities for closed 1-forms on $M$, and describes some applications. 

The authors would like to thank Vidit Nanda, Graeme Segal, Edward Witten and Jon Woolf for valuable discussions, and to dedicate this paper to the memory of Michael Atiyah, without whom this collaboration would not have been possible. This work was supported in part by AFOSR award FA9550-16-1-0082 and DOE award DE-SC0019380.

\section{Systems of Morse neighbourhoods} \label{sec:neighbourhoods}

Let $M$ be a compact  Riemannian manifold without boundary, and let $f \colon M \to \RR$  be a smooth function on $M$  whose critical locus $\text{Crit}(f)$ has finitely many connected components, with its set of critical {values} $\text{Critval}(f)= f(\text{Crit}(f)) = \{c_1,\ldots, c_p\}$. 

In order to state our main result, Theorem \ref{mainthm}, we first define the notion of a system of Morse neighbourhoods for $f$. The definition (see Definition \ref{Mn} below) is slightly more general than that given in the introduction; we will use the terminology \lq a system of {\em strict} Morse neighbourhoods' for the latter.

\begin{defn} \label{strictMn}
Let $f \colon M\to \RR$ be a smooth function on the compact  Riemannian manifold $M$ such that the set $\mathcal{D}$ of connected components of the critical locus $\text{Crit}(f)$ is finite.
A {\it system of strict Morse neighbourhoods} for $f$ is given by
$$ \{ \mathcal{N}_{C,n} : C \in \mathcal{D} , \,\,\, n \geqslant 0 \} $$
such that 
if $f$ takes value $c$ on $C \in \mathcal{D}$, and $ n \geqslant 0$, then 

(a) $\mathcal{N}_{C,n}$ is a neighbourhood of $C $ in $M$ containing no other critical points for $f$, with $\mathcal{N}_{C,n+1} \subseteq (\mathcal{N}_{C,n})^{\circ}$ 
and $\bigcap_{m \geqslant 0} \mathcal{N}_{C,m} = C $;

(b) $\mathcal{N}_{C,n}$ is a compact submanifold of $M$ with corners (locally modelled on $[0,\infty) ^2 \times \RR^{\dim M -2}$) and has boundary
$$\partial \mathcal{N}_{C,n} = \partial_+ \mathcal{N}_{C,n} \cup \partial_- \mathcal{N}_{C,n}$$
where $\partial_\pm \mathcal{N}_{C,n}$ is a compact submanifold of $M$ with boundaries and  the corners of $\mathcal{N}_{C,n}$ are given by
$$ \partial(\partial_+\mathcal{N}_{C,n}) \, = \, \partial(\partial_- \mathcal{N}_{C,n}) \, = \, (\partial_+ \mathcal{N}_{C,n}) \cap  (\partial_- \mathcal{N}_{C,n}) \, = \, f^{-1}(c) \cap \partial \mathcal{N}_{C,n};$$

(c) the gradient vector field $\grad(f)$ on $M$ associated to its Riemannian metric $g$  satisfies\\
(i) $(\partial_+ \mathcal{N}_{C,n})^\circ = \partial _+ \mathcal{N}_{C,n} \setminus  \partial(\partial_+\mathcal{N}_{C,n})  \subseteq f^{-1}(c,\infty)$ with the restriction of $\grad (f)$ to $(\partial_+ \mathcal{N}_{C,n})^\circ$ pointing inside $\mathcal{N}_{C,n}$, and  \\
(ii) $(\partial_- \mathcal{N}_{C,n})^\circ = \partial _- \mathcal{N}_{C,n} \setminus  \partial(\partial_-\mathcal{N}_{C,n})  \subseteq f^{-1}(-\infty,c)$ with the restriction of $\grad (f)$ to $(\partial_- \mathcal{N}_{C,n})^\circ$ pointing outside $\mathcal{N}_{C,n}$.

\end{defn}

\begin{rem}
Analysis on manifolds with corners is not straightforward (cf. \cite{Grieser,Joyce,Joyce1,MM,Melrose}) but we will not need the subtleties of this theory.
\end{rem}

\begin{defn} \label{smoothMn}
We will refer to a system of strict Morse neighbourhoods such that, for all $(C,n)$, the boundary $\partial \mathcal{N}_{C,n} = \partial_+ \mathcal{N}_{C,n} \cup \partial_- \mathcal{N}_{C,n}$ is a submanifold of $M$ (and hence $\mathcal{N}_{C,n}$ is a submanifold with boundary), as a {\it system of smooth Morse neighbourhoods}.
\end{defn}

\begin{figure}[t]
\includegraphics[width = 0.4\linewidth]{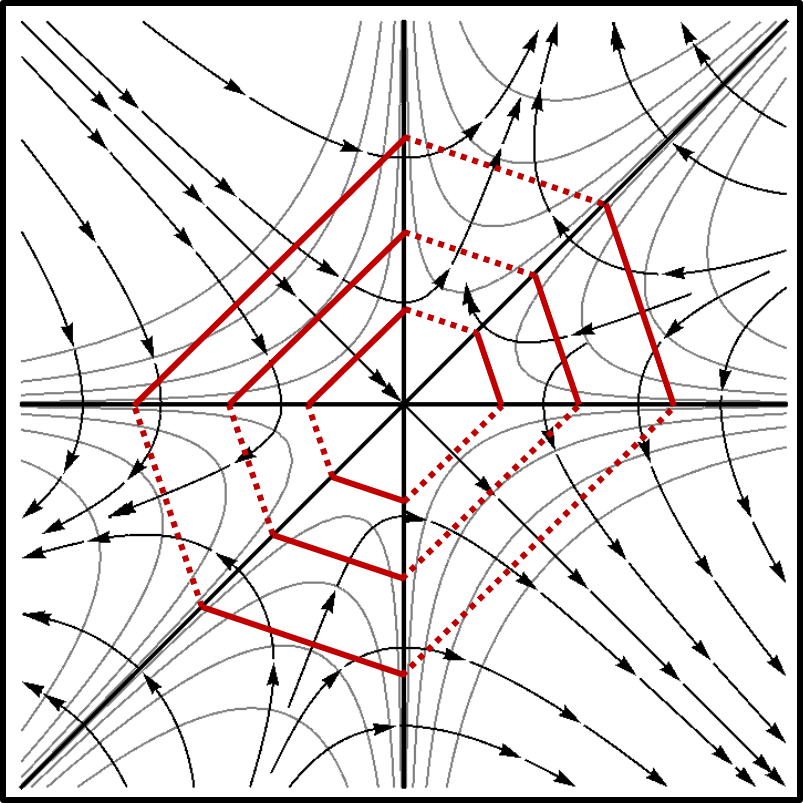}
\centering
\caption{Strict Morse neighbourhoods for $f$ given locally by $f(x,y) = x y\, (x-y)$.}
\label{fig:strict}
\end{figure}

Suppose that we have a system of strict Morse neighbourhoods for $f$.
Using the gradient flow $\grad(f)$, downwards on $f^{-1}(c,\infty)$ and upwards on $f^{-1}(-\infty, c)$ until $\partial \mathcal{N}_{C,n+1} \cup f^{-1}(c)$ is reached, we obtain a retraction from $\mathcal{N}_{C,n}$ to $\mathcal{N}_{C,n+1} \cup (f^{-1}(c) \cap \mathcal{N}_{C,n})$ which takes $\partial_- \mathcal{N}_{C,n}$ to $\partial_- \mathcal{N}_{C,n+1} \cup (f^{-1}(c) \cap (\mathcal{N}_{C,n}\setminus \mathcal{N}_{C,n+1}))$, and so induces an isomorphism of relative homology groups
$$H_i(\mathcal{N}_{C,n},\partial_- \mathcal{N}_{C,n};\FF) \to H_i(\mathcal{N}_{C,n+1} \cup (f^{-1}(c) \cap \mathcal{N}_{C,n}),\partial_- \mathcal{N}_{C,n+1}\cup (f^{-1}(c) \cap (\mathcal{N}_{C,n}\setminus \mathcal{N}_{C,n+1}));\FF).$$
Excision (\cite{Hatcher} Thm 2.20), together with the fact that $f^{-1}(c) \cap \mathcal{N}_{C,n}$ is diffeomorphic near $f^{-1}(c) \cap \partial \mathcal{N}_{C,n}$ to the product of $f^{-1}(c) \cap \partial \mathcal{N}_{C,n}$ with an interval,
 then gives isomorphisms from
$$H_i(\mathcal{N}_{C,n+1} \cup (f^{-1}(c) \cap \mathcal{N}_{C,n}),\partial_- \mathcal{N}_{C,n+1}\cup (f^{-1}(c) \cap (\mathcal{N}_{C,n}\setminus \mathcal{N}_{C,n+1}));\FF) $$ to $ H_i(\mathcal{N}_{C,n+1},\partial_- \mathcal{N}_{C,n+1};\FF),$
and by composition we obtain isomorphisms of relative homology
\begin{equation} \label{phistrict} \phi_i^{m,n}: H_i(\mathcal{N}_{C,n},\partial_- \mathcal{N}_{C,n};\FF) \to H_i(\mathcal{N}_{C m},\partial_- \mathcal{N}_{C m};\FF)
\end{equation} when $m>n$. 

Next we will define a system of Morse neighbourhoods without the strictness condition. This allows the boundary of a Morse neighbourhood to decompose into three submanifolds instead of just two; in addition to $\partial_+ \mathcal{N}_{C,n}$ and  $\partial_- \mathcal{N}_{C,n}$ which are transverse to the gradient flow of $f$, another submanifold $\partial_\perp \mathcal{N}_{C,n}$ is allowed which is invariant under the gradient flow.

\begin{defn} \label{Mn}
A {\it system of Morse neighbourhoods} for $f$ is given by
$ \{ \mathcal{N}_{C,n} : C \in \mathcal{D} , \,\,\, n \geqslant 0 \} $
such that 
if $ n \geqslant 0$  and $f$ takes value $c$ on $C \in \mathcal{D}$ then 

(a) $\mathcal{N}_{C,n}$ is a neighbourhood of $C $ in $M$ containing no points of $\text{Crit}(f) \setminus \mathcal{D}$, with $\mathcal{N}_{C,n+1} \subseteq (\mathcal{N}_{C,n})^{\circ}$ 
 and $\bigcap_{m \geqslant 0} \mathcal{N}_{C,m} = C $;

(b) $\mathcal{N}_{C,n}$ is a compact submanifold of $M$ with corners (locally modelled on $[0,\infty) ^2 \times \RR^{\dim M -2}$) and has boundary
$$\partial \mathcal{N}_{C,n} = \partial_+ \mathcal{N}_{C,n} \cup \partial_- \mathcal{N}_{C,n} \cup \partial_\perp \mathcal{N}_{C,n},
\,\,\, \mbox{ with } \,\,\,  \partial_+ \mathcal{N}_{C,n} \cap \partial_- \mathcal{N}_{C,n} \cap \partial_\perp \mathcal{N}_{C,n} = \emptyset,$$
where $\partial_+ \mathcal{N}_{C,n}$ , $\partial_- \mathcal{N}_{C,n}$ and $\partial_\perp \mathcal{N}_{C,n}$ are compact submanifolds of $M$ with boundaries forming the corners of $\mathcal{N}_{C,n}$ while $f$ is constant on
$   (\partial_+ \mathcal{N}_{C,n}) \cap  (\partial_- \mathcal{N}_{C,n}) \, = \, \partial(\partial_+\mathcal{N}_{C,n}) \, \cap \, \partial(\partial_- \mathcal{N}_{C,n}), $ 
with value $c$, and on each of
$ \partial(\partial_\pm \mathcal{N}_{C,n}) \, \cap \, \partial(\partial_\perp \mathcal{N}_{C,n}) \, = \, (\partial_\pm \mathcal{N}_{C,n}) \cap  (\partial_\perp \mathcal{N}_{C,n})  
;$

(c)  the gradient vector field $\grad(f)$ on $M$ associated to its Riemannian metric $g$  satisfies\\
(i) $(\partial_+ \mathcal{N}_{C,n})^\circ = \partial _+ \mathcal{N}_{C,n} \setminus  \partial(\partial_+\mathcal{N}_{C,n})  \subseteq f^{-1}(c,\infty)$ with the restriction of $\grad (f)$ to $(\partial_+ \mathcal{N}_{C,n})^\circ$ pointing inside $\mathcal{N}_{C,n}$;  \\
(ii) $(\partial_- \mathcal{N}_{C,n})^\circ = \partial _- \mathcal{N}_{C,n} \setminus  \partial(\partial_-\mathcal{N}_{C,n})  \subseteq f^{-1}(-\infty,c)$ with the restriction of $\grad (f)$ to $(\partial_- \mathcal{N}_{C,n})^\circ$ pointing outside $\mathcal{N}_{C,n}$;
\\
(iii) $\partial_\perp \mathcal{N}_{C,n}$ is contained in the union of the trajectories under the gradient flow $\grad (f)$ of $f^{-1}(c) \cap \partial \mathcal{N}_{C,n}$, which is a submanifold of codimension one in (the smooth part of) $f^{-1}(c)$.

\end{defn}

\begin{rem} A system of Morse neighbourhoods $\{ \mathcal{N}_{C,n}: C \in \mathcal{D},  n \geqslant 0 \} $ is strict if and only if $\partial_\perp \mathcal{N}_{C,n}$ is empty, for each $C$ and $n$.
\end{rem}

\begin{rem}
We can, and usually will, assume that each Morse neighbourhood $\mathcal{N}_{C,n}$ is connected, by replacing $\mathcal{N}_{C,n}$ with its connected component containing $C$.
\end{rem}

Finally it is useful to define another special type of Morse neighbourhoods; these will be called {\em cylindrical} Morse neighbourhoods. We will see that systems of cylindrical Morse neighbourhoods always exist, and that they allow us to build systems of strict Morse neighbourhoods. 

\begin{defn} \label{cylindricalMn}
 A {\it system of cylindrical Morse neighbourhoods} for $f$ is given by
$$ \{ \mathcal{N}_{C,n} : C \in \mathcal{D} , \,\,\, n \geqslant 0 \} $$
such that 
if $f$ takes the value $c$ on $C \in \mathcal{D}$ and if $ n \geqslant 0$ then 

(a) $\mathcal{N}_{C,n}$ is a neighbourhood of $C $ in $M$ containing no other critical points for $f$, with $\mathcal{N}_{C,n+1} \subseteq (\mathcal{N}_{C,n})^{\circ}$ 
 and $\bigcap_{m \geqslant 0} \mathcal{N}_{C,m} = C $;

(b) $\mathcal{N}_{C,n}$ is a compact submanifold of $M$ with corners (locally modelled on $[0,\infty) ^2 \times \RR^{\dim M -2}$) and has boundary
$$\partial \mathcal{N}_{C,n} = \partial_+ \mathcal{N}_{C,n} \cup \partial_- \mathcal{N}_{C,n} \cup \partial_\perp \mathcal{N}_{C,n}$$
where $\partial_+ \mathcal{N}_{C,n}$ , $\partial_- \mathcal{N}_{C,n}$ and $\partial_\perp \mathcal{N}_{C,n}$ are compact submanifolds of $M$ with boundary, and

(c)  for some $\delta_+ = \delta_+ (c,n) >0$ and $\delta_- = \delta_-(c,n) >0$ \\
(i)
$\partial_+ \mathcal{N}_{C,n} \subseteq f^{-1}(c + \delta_+)$  and $\partial_- \mathcal{N}_{C,n} \subseteq f^{-1}(c - \delta_-)$;
\\
(ii) $\partial_\perp \mathcal{N}_{C,n}$ is the intersection of $f^{-1}[c-\delta_-, c+\delta_+]$ with the trajectories under the gradient flow $\grad (f)$ of $f^{-1}(c) \cap \partial \mathcal{N}_{C,n}$, which is a submanifold of codimension one in (the smooth part of) $f^{-1}(c)$.

\end{defn}

\begin{figure}[t]
\includegraphics[width = 0.4\linewidth]{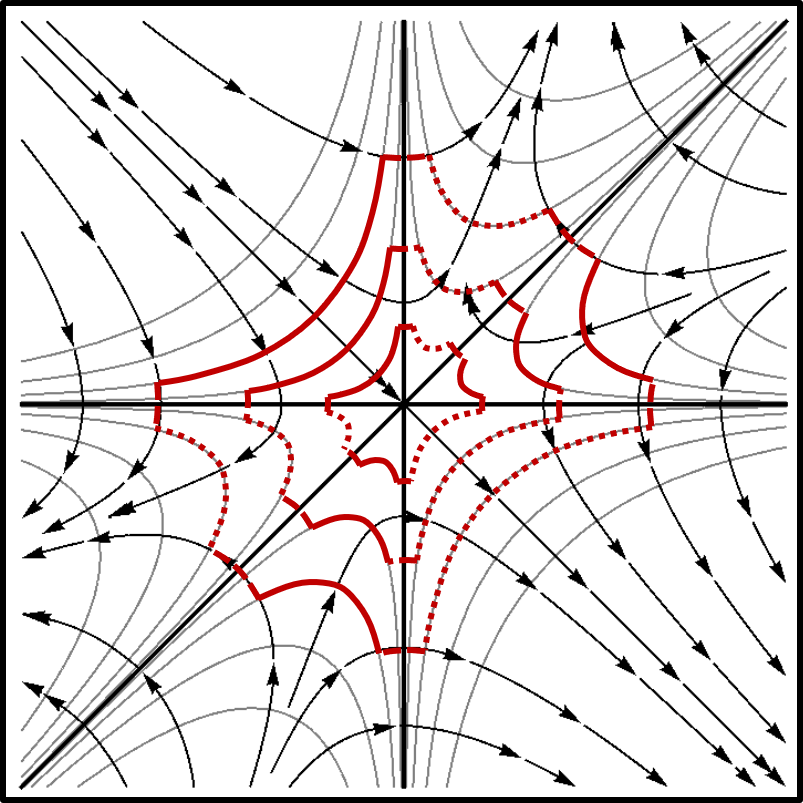}
\centering
\caption{Cylindrical Morse neighbourhoods for $f$ given locally by $f(x,y) = x y\, (x-y)$.}
\label{fig:cylindrical}
\end{figure}

Now suppose that we have a system of cylindrical Morse neighbourhoods for $f$ as above. By combining the gradient flow $\grad(f)$ downwards on $f^{-1}[c+\delta_+(c,n+\!1),\infty)$ and upwards on $f^{-1}(-\infty, c-\delta_-(c,n+\!1))$, 
we obtain a retraction from $\mathcal{N}_{C,n}$ onto 
$$\mathcal{N}_{C,n} \cap f^{-1}([c-\delta_-(c,n+\!1), c+\delta_+(c,n+\!1])$$
which takes $\partial_- \mathcal{N}_{C,n}\cup (\partial_\perp \mathcal{N}_{C,n} \cap f^{-1}[c,\infty))$ to 
its intersection with $f^{-1}([c-\delta_-(c,n+\!1), c+\delta_+(c,n+\!1])$. 
Similarly the closure of the complement of $\mathcal{N}_{C,n+\!1}$ in $\mathcal{N}_{C,n} \cap f^{-1}([c-\delta_-(c,n+\!1), c+\delta_+(c,n+\!1)])$ is diffeomorphic via the gradient flow to the product with the interval $[c-\delta_-(c,n+\!1), c+\delta_+(c,n+\!1)]$ of the complement of $\mathcal{N}_{C,n+\!1}$ in $\mathcal{N}_{C,n} \cap f^{-1}(c)$, which in turn is diffeomorphic near $\partial \mathcal{N}_{C,n+\!1} \cap f^{-1}(c)$ to the product of $\partial \mathcal{N}_{C,n+\!1} \cap f^{-1}(c)$ with an interval.
The retraction induces an isomorphism of relative homology groups from 
$ H_i(\mathcal{N}_{C,n},\partial_{-\perp} \mathcal{N}_{C,n};\FF),$ where $$ \partial_{-\perp} \mathcal{N}_{C,n} = \partial_- \mathcal{N}_{C,n} \cup (\partial_\perp \mathcal{N}_{C,n} \cap f^{-1}(-\infty,c]),$$ to $$H_i(
\mathcal{N}_{C,n} \cap f^{-1}([c-\delta_-(c,n+\!1), c+\delta_+(c,n+\!1)]), \mathcal{N}_{C,n} \cap f^{-1}(c-\delta_-(c,n+\!1
)\cup (\partial_\perp \mathcal{N}_{C,n} \cap f^{-1}[c-\delta_+(c,n+\!1),c])
;\FF)$$
which is isomorphic to $H_i(\mathcal{N}_{C,n+1} \cup (\mathcal{N}_{C,n} \cap f^{-1}(c)), \partial_{-\perp}\mathcal{N}_{C,n+1} \cup (\mathcal{N}_{C,n} \cap f^{-1}(c)\setminus \mathcal{N}_{C,n+1})
;\FF)$. Using excision (\cite{Hatcher} Thm 2.20) this is isomorphic to $ H_i(\mathcal{N}_{C,n+1},\partial_{-\perp} \mathcal{N}_{C,n+1});\FF) $.
Thus by composition there are induced isomorphisms
\begin{equation} \label{phicylindrical} \phi_i^{m,n}: H_i(\mathcal{N}_{C,n},\partial_{-\perp} \mathcal{N}_{C,n};\FF) \to H_i(\mathcal{N}_{C m},\partial_{-\perp} \mathcal{N}_{C m};\FF) \end{equation} when $m>n$.  
When $\mathcal{N} = \{ \mathcal{N}_{C,n} : C \in \mathcal{D}, n \geqslant 0 \} $ is any system of Morse neighbourhoods,
then combining this construction with that of (\ref{phistrict}) gives us isomorphisms of relative homology
\begin{equation} \label{phi} \phi_i^{m,n}: H_i(\mathcal{N}_{C,n},\partial_\pm \mathcal{N}_{C,n};\FF) \to H_i(\mathcal{N}_{C m},\partial_\pm \mathcal{N}_{C m};\FF)
\end{equation} when $m>n$. 

\begin{rem} \label{compare}
Suppose that $ \{ \mathcal{N}_{C,n} : C \in \mathcal{D}, n \geqslant 0 \}$ is a system of Morse neighbourhoods for $f$ with respect to a Riemannian metric $g$ on $M$, and that $ \{ \mathcal{N}_{C,n} : C \in \mathcal{D}, n \geqslant 0 \}$ is a system of Morse neighbourhoods for $f$ with respect to a Riemannian metric $g'$ on $M$. We have $\bigcap_{n \geqslant 0} \mathcal{N}_{C,n} = C  = \bigcap_{n \geqslant 1} \mathcal{N}'_{C,n}$ and so, by compactness, for each $n$ and $C$ there exists $m(C,n)$ such that 
$$\mathcal{N}'_{C,m(C,n)} \subseteq (\mathcal{N}_{C,n})^{\circ}.$$
If $m \geqslant m(C,n)$, then by constructing a metric which coincides with $g$ in a neighbourhood of $\partial \mathcal{N}_{C,n}$ and with $g'$ in a neighbourhood of $\partial \mathcal{N}'_{C,m}$ and using the construction of the isomorphisms given at (\ref{phi}), we obtain  isomorphisms of relative homology
$$\psi_i^{m,n}: H_i(\mathcal{N}_{C,n},\partial_- \mathcal{N}_{C,n};\FF) \to H_i(\mathcal{N}'_{C,m},\partial_{-\perp} \mathcal{N}'_{C,m};\FF).
$$
These induce isomorphisms 
$$ H_i(\mathcal{N}_{C,\infty},\partial_- \mathcal{N}_{C,\infty};\FF) \to H_i(\mathcal{N}'_{C,\infty},\partial_{-\perp} \mathcal{N}'_{C \infty};\FF)
$$
when, as at (\ref{Hlimit}), 
we define $H_i(\mathcal{N}_{C,\infty},\partial_\pm \mathcal{N}_{C,\infty};\FF)$ with isomorphisms 
$\phi^{\infty,n}_i : H_i(\mathcal{N}_{C,n},\partial_\pm \mathcal{N}_{C,n};\FF) \to H_i(\mathcal{N}_{C,\infty},\partial_\pm \mathcal{N}_{C,\infty};\FF)$
compatible with $\phi^{m,n}_i$ for all $m,n,i$ by taking the limit as $m \to \infty$ in (\ref{phi}).
\end{rem}

\begin{rem} \label{modify}
Given any system of cylindrical Morse neighbourhoods $ \{ \mathcal{N}_{C,n} : C \in \mathcal{D}, n \geqslant 0 \}$, we can modify each $\mathcal{N}_{C,n}$ near $\partial_\perp \mathcal{N}_{C,n}$ to obtain a system of strict Morse neighbourhoods $$ \{ \mathcal{N}'_{C,n} : C \in \mathcal{D}, n \geqslant 0 \}$$ 
such that $\mathcal{N}'_{C,n} \subseteq \mathcal{N}_{C,n}$ and $\mathcal{N}'_{C,n} \cap f^{-1}(c) = \mathcal{N}_{C,n} \cap f^{-1}(c)$, 
with a homotopy equivalence taking the pair $(\mathcal{N}_{C,n},\partial_{-\perp} \mathcal{N}_{C,n})$ to the pair
$({N}'_{C,n},\partial_- {N}'_{C,n})$  for each $c$ and $n$, and fixing $f^{-1}(c)$ pointwise. These induce isomorphisms
from $H_i({N}'_{c,n},\partial_- {N}'_{c,n};\FF)$ to  $H_i(\mathcal{N}_{C,n},\partial_{-\perp} \mathcal{N}_{C,n};\FF)$. Combining these and their inverses with  the isomorphisms $\phi_i^{m,n}$ defined at (\ref{phicylindrical}) above, we obtain the maps (\ref{phistrict})
for $ \{ \mathcal{N}'_{C,n} : C \in \mathcal{C}, n \geqslant 1 \}$, which are therefore isomorphisms. 

 If we wish, we can construct $\mathcal{N}'_{C,n}$ in such a way that the angle between $\partial_+ \mathcal{N}'_{C,n}$ and $\partial_- \mathcal{N}'_{C,n}$ is $\pi$; then $\partial \mathcal{N}'_{C,n} = \partial_+ \mathcal{N}'_{C,n} \cup \partial_- \mathcal{N}'_{C,n}$ is smooth and $\mathcal{N}'_{C,n}$ can be regarded as a submanifold of $M$ with boundary, rather than a submanifold with corners. We can thus obtain a system of smooth Morse neighbourhoods (Definition \ref{smoothMn}). In addition we can choose $ \{ \mathcal{N}'_{C,n} : C \in \mathcal{C}, n \geqslant 0 \}$ so that $f|_{\partial_{\pm}\mathcal{N}'_{C,n}}$ is a Morse function on the interior of $\partial_{\pm}\mathcal{N}'_{C,n}$ (with minimum/maximum on the boundary $\partial_-\mathcal{N}'_{C,n} \cap \partial_+\mathcal{N}'_{C,n}$). 
 
 We can also find a homotopy equivalence from $\mathcal{N}_{C,n}$ to itself taking $\partial_{-\perp} \mathcal{N}_{C,n}$ to 
 $\partial_{-} \mathcal{N}_{C,n}$, so that
 \begin{equation} \label{noperp} H_i(\mathcal{N}_{C,n}, \partial_{-\perp} \mathcal{N}_{C,n};\FF) \cong H_i(\mathcal{N}_{C,n}, \partial_- \mathcal{N}_{C,n};\FF).
 \end{equation} 
\end{rem}

\begin{rem}\label{Sardrem}
By applying Sard's theorem (\cite{Sard} Thm II.3.1) to the smooth function $|\!| \grad(f)   |\!|^2$ on the submanifold $f^{-1}(c) \setminus \text{Crit}(f) $ of $M$ for each $c \in \text{Critval}(f)$, we can find a sequence $(\epsilon_n(c))_{n \geqslant 1}$ of strictly positive real numbers which are regular values of $|\!| \grad(f)   |\!|^2$ on this submanifold and which tend to 0 as $n \to \infty$. We can also choose disjoint open neighbourhoods $\{U_C : C \in \mathcal{C}_c \}$ in $f^{-1}(c)$ of the critical sets $C \in \mathcal{C}_c$ contained in $f^{-1}(c)$.
There is some $n_0 = n_0(\underline{\epsilon}) >0$ such that for each $c \in \text{Critval}(f)$
$$  \{ x \in f^{-1}(c) : |\!| \grad(f)  (x) |\!|^2 \leqslant \epsilon_{n_0}(c) \} \subseteq \bigcup_{C \in \mathcal{C}_c} U_C.$$
We can then construct a system of cylindrical Morse neighbourhoods $\{ \mathcal{N}^{\underline{\epsilon}}_{C,n}: C \in \mathcal{C}, n\geqslant 0 \}$ such that
$$ \mathcal{N}^{\underline{\epsilon}}_{C,n} \cap f^{-1}(c) = \{ x \in f^{-1}(c) : |\!| \grad(f)  (x) |\!|^2 \leqslant \epsilon_{n_0 + n}(c) \} \cap U_C.$$
\end{rem}

The following proposition follows from Remarks \ref{compare}, \ref{modify} and \ref{Sardrem}.

\begin{prop} \label{Mnexist}
Any  smooth function $f \colon M \to \RR$ on a Riemannian manifold $M$  whose critical locus $\text{Crit}(f)$ has finitely many connected components has a system of strict Morse neighbourhoods. Moreover,
if $\mathcal{N} =  \{ \mathcal{N}_{C,n} : C \in \mathcal{C} , \,\,\, n \geqslant 0 \} $ is any system of  Morse neighbourhoods for $f$, then 
the vector spaces 
$H_i(\mathcal{N}_{C,n},\partial_\pm \mathcal{N}_{C,n};\RR)$, up to canonical isomorphism,
 and thus the Poincar\'e polynomials $P_t(\mathcal{N}_{C, \infty},\partial_- \mathcal{N}_{C, \infty})$ and
$P_t(\mathcal{N}_{C, \infty},\partial_+ \mathcal{N}_{C, \infty})$ defined as
 $$P_t(\mathcal{N}_{C, \infty},\partial_\pm \mathcal{N}_{C, \infty}) =  \sum_{i \geqslant 0} t^i \dim_\FF H_i(\mathcal{N}_{C,n},\partial_\pm \mathcal{N}_{C,n};\FF),$$ are independent of $n$, and also independent of the
choice of system of Morse neighbourhoods, and of the Riemannian metric on $M$. %, up to canonical isomorphism.
\end{prop}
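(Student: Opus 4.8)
The plan is to assemble Remarks \ref{compare}, \ref{modify} and \ref{Sardrem} into the three assertions of the proposition. First I would establish existence. Remark \ref{Sardrem} already produces, for any $f$ whose critical locus has finitely many connected components, a system of cylindrical Morse neighbourhoods: one applies Sard's theorem to $\|\grad(f)\|^2$ on each punctured level set $f^{-1}(c) \setminus \text{Crit}(f)$ to obtain a strictly decreasing null sequence $(\epsilon_n(c))$ of regular values, takes the corresponding sublevel sets of $\|\grad(f)\|^2$ inside disjoint open neighbourhoods of the components of $\text{Crit}_c(f)$ in $f^{-1}(c)$, and thickens these along the gradient flow. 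Then I would apply Remark \ref{modify}, which rounds the corner along $\partial_\perp$: each cylindrical $\mathcal{N}_{C,n}$ is deformed to a strict (in fact smooth) Morse neighbourhood $\mathcal{N}'_{C,n} \subseteq \mathcal{N}_{C,n}$ agreeing with it on $f^{-1}(c)$, together with a homotopy equivalence sending the pair $(\mathcal{N}_{C,n}, \partial_{-\perp}\mathcal{N}_{C,n})$ to $(\mathcal{N}'_{C,n}, \partial_-\mathcal{N}'_{C,n})$. This gives the first claim.

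Next, for a fixed system of Morse neighbourhoods $\mathcal{N}$ with respect to a metric $g$, the isomorphisms $\phi_i^{m,n}$ of (\ref{phi}), built from gradient-flow retractions followed by excision, show that $H_i(\mathcal{N}_{C,n}, \partial_\pm\mathcal{N}_{C,n};\FF)$ is independent of $n$ up to canonical isomorphism; passing to the limit as in (\ref{Hlimit}) defines $H_i(\mathcal{N}_{C,\infty}, \partial_\pm\mathcal{N}_{C,\infty};\FF)$ with compatible isomorphisms $\phi_i^{\infty,n}$, so that $P_t(\mathcal{N}_{C,\infty}, \partial_\pm\mathcal{N}_{C,\infty}) = P_t(\mathcal{N}_{C,n}, \partial_\pm\mathcal{N}_{C,n})$ for every $n$. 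For independence of the system and the metric, given a second system $\mathcal{N}'$ with respect to $g'$, compactness together with condition (a) yields $m(C,n)$ such that $\mathcal{N}'_{C,m} \subseteq (\mathcal{N}_{C,n})^\circ$ whenever $m \geqslant m(C,n)$; Remark \ref{compare} then constructs, via an auxiliary metric equal to $g$ near $\partial\mathcal{N}_{C,n}$ and to $g'$ near $\partial\mathcal{N}'_{C,m}$, isomorphisms $\psi_i^{m,n}: H_i(\mathcal{N}_{C,n}, \partial_-\mathcal{N}_{C,n};\FF) \to H_i(\mathcal{N}'_{C,m}, \partial_{-\perp}\mathcal{N}'_{C,m};\FF)$, and by (\ref{noperp}) the target may be replaced by $H_i(\mathcal{N}'_{C,m}, \partial_-\mathcal{N}'_{C,m};\FF)$. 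Checking that these $\psi$'s intertwine the nesting isomorphisms on both sides, they descend to an isomorphism $H_i(\mathcal{N}_{C,\infty}, \partial_-\mathcal{N}_{C,\infty};\FF) \to H_i(\mathcal{N}'_{C,\infty}, \partial_-\mathcal{N}'_{C,\infty};\FF)$; the $\partial_+$ case is symmetric, and equality of the Poincaré polynomials follows immediately.

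The hard part will be the word \emph{canonical}: one must ensure that the isomorphism between the two $\infty$-level homologies does not depend on the interpolating metric nor on the index $m$ chosen. The key point to nail down is that a gradient-flow retraction between two nested hypersurfaces depends on the metric only in a collar of the region it sweeps out, so that any two admissible auxiliary metrics — which can always be joined by a path of metrics each fixed near the two relevant boundaries — induce the same map on relative homology, by a continuation/isotopy argument; and compatibility with the nesting maps $\phi_i^{m,n}$ is exactly the functoriality already used to define (\ref{phi}) (composing gradient-flow retractions for successive cut-offs gives the retraction for the composite). Once this coherence is in place, "up to canonical isomorphism" is justified and all three independence statements — in $n$, in the choice of system, and in the metric — hold simultaneously, and with them the well-definedness of $P_t(\mathcal{N}_{C,\infty}, \partial_\pm\mathcal{N}_{C,\infty})$.
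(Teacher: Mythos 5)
Your proposal is correct and follows essentially the same route as the paper, which proves Proposition \ref{Mnexist} precisely by combining Remarks \ref{Sardrem}, \ref{modify} and \ref{compare} in the order you describe. Your additional attention to why the limiting isomorphisms are \emph{canonical} (independence of the interpolating metric and of the index $m$) is a point the paper leaves implicit, and your isotopy-of-metrics argument for it is sound.
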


We can now state our generalised version of the Morse inequalities.

\begin{thm} \label{mainthm}
Let $M$ be a compact  Riemannian manifold without boundary, and suppose that  $f \colon M \to \RR$ is a smooth function 
 whose critical locus $\text{Crit}(f)$ has finitely many connected components.  Suppose also that  $ \{ \mathcal{N}_{C,n} : C \in \mathcal{C} , n \geqslant 0 \} $ is a system of Morse neighbourhoods for $f \colon M \to \RR$. Then the Betti numbers of $M$ satisfy the descending Morse inequalities
$$  P_t(M) = \sum_{C \in \mathcal{C}}  P_t(\mathcal{N}_{C,\infty},\partial_- \mathcal{N}_{C,\infty})  \, - \, (1+t)R_{\downarrow}(t)\,\,\,\, \,\,\,  \mbox{ where } R_{\downarrow}(t) \geqslant 0
$$
and 
 the ascending Morse inequalities
$$  P_t(M) = \sum_{C \in \mathcal{C}}  P_t(\mathcal{N}_{C,\infty},\partial_+\mathcal{N}_{C,\infty})  \, - \, (1+t)R_{\uparrow}(t)\,\,\,\, \,\,\,  \mbox{ where } R_{\uparrow}(t) \geqslant 0.
$$
\end{thm}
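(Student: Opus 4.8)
The plan is to mimic the classical ``attaching handles'' proof of the Morse inequalities, but with the critical points replaced by the connected components $C\in\mathcal{D}$ of $\text{Crit}(f)$ and with the handles replaced by the Morse neighbourhoods $\mathcal{N}_{C,n}$. First I would enumerate the critical values $c_1<c_2<\dots<c_p$ and, for $a\in\RR$, set $M^a=f^{-1}(-\infty,a]$. As in Milnor, the gradient flow gives a deformation retraction of $M^b$ onto $M^a$ whenever $[a,b]$ contains no critical value, so the topology of $M^a$ changes only as $a$ crosses some $c_k$. The key local statement to establish is: for $\epsilon>0$ small, $M^{c_k+\epsilon}$ is homotopy equivalent to $M^{c_k-\epsilon}$ with, for each $C\in\mathcal{D}_{c_k}$, the pair $(\mathcal{N}_{C,n},\partial_-\mathcal{N}_{C,n})$ glued on along $\partial_-\mathcal{N}_{C,n}$. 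To prove this I would choose $n$ large enough that the neighbourhoods $\{\mathcal{N}_{C,n}:C\in\mathcal{D}_{c_k}\}$ are disjoint and contained in $f^{-1}(c_k-\epsilon,c_k+\epsilon)$, and use the gradient flow away from $\bigcup_{C\in\mathcal{D}_{c_k}}\mathcal{N}_{C,n}$: the part of $f^{-1}[c_k-\epsilon,c_k+\epsilon]$ outside these neighbourhoods has no critical points, so (after using condition (c) on the behaviour of $\grad(f)$ on $\partial_\pm\mathcal{N}_{C,n}$ and handling $\partial_\perp$ via Remark \ref{modify}, reducing to the strict case) the downward flow retracts $M^{c_k+\epsilon}$ onto $M^{c_k-\epsilon}\cup\bigcup_{C\in\mathcal{D}_{c_k}}\mathcal{N}_{C,n}$, with the overlap being a collar of $\partial_-\mathcal{N}_{C,n}$ in $M^{c_k-\epsilon}$. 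Excision (\cite{Hatcher} Thm 2.20) then identifies $H_*(M^{c_k+\epsilon},M^{c_k-\epsilon};\FF)$ with $\bigoplus_{C\in\mathcal{D}_{c_k}} H_*(\mathcal{N}_{C,n},\partial_-\mathcal{N}_{C,n};\FF)$, which by Proposition \ref{Mnexist} equals $\bigoplus_{C\in\mathcal{D}_{c_k}} P_t(\mathcal{N}_{C,\infty},\partial_-\mathcal{N}_{C,\infty})$ at the level of Poincar\'e polynomials.

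Next I would assemble these local contributions into the global inequality. Write $M_k=M^{c_k+\epsilon}$ (so $M_0=\emptyset$ and $M_p=M$), and consider the long exact sequence of the pair $(M_k,M_{k-1})$ in $\FF$-homology. The standard subadditivity argument for Poincar\'e polynomials of an exact triangle gives
\begin{equation}
P_t(M_{k-1})+P_t(M_k,M_{k-1})-P_t(M_k)=(1+t)Q_k(t)
\end{equation}
for some $Q_k(t)\geqslant 0$. Summing over $k=1,\dots,p$, the terms $P_t(M_{k-1})$ and $P_t(M_k)$ telescope, leaving
\begin{equation}
\sum_{k=1}^p P_t(M_k,M_{k-1})-P_t(M)=(1+t)\sum_{k=1}^p Q_k(t),
\end{equation}
and substituting $P_t(M_k,M_{k-1})=\sum_{C\in\mathcal{D}_{c_k}}P_t(\mathcal{N}_{C,\infty},\partial_-\mathcal{N}_{C,\infty})$ and $\mathcal{D}=\bigsqcup_k\mathcal{D}_{c_k}$ yields the descending Morse inequalities with $R_{\downarrow}(t)=\sum_k Q_k(t)\geqslant 0$. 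The ascending inequalities follow symmetrically by running the argument with $-f$ (equivalently, using superlevel sets $f^{-1}[a,\infty)$ and the upward flow), noting that $\partial_+\mathcal{N}_{C,n}$ for $f$ plays the role of $\partial_-$ for $-f$; alternatively one may invoke the Poincar\'e/Alexander--Spanier duality remark when $M$ is oriented, but the direct argument avoids any orientability hypothesis.

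The main obstacle I expect is the local retraction step across a critical value, specifically handling the corners of $\mathcal{N}_{C,n}$ and the $\partial_\perp$ piece in the non-strict case, and ensuring the gradient flow really does produce a well-defined deformation retraction of $M^{c_k+\epsilon}$ onto $M^{c_k-\epsilon}\cup\bigcup_C\mathcal{N}_{C,n}$ rather than just a homotopy equivalence --- trajectories could in principle fail to reach $\{f=c_k-\epsilon\}\cup\partial_-\mathcal{N}_{C,n}$ in finite time, or could graze the boundary tangentially. I would deal with the first issue by first reducing to strict (hence smooth, by Remark \ref{modify}) Morse neighbourhoods using the homotopy equivalences supplied there, so that $\partial\mathcal{N}_{C,n}=\partial_+\mathcal{N}_{C,n}\cup\partial_-\mathcal{N}_{C,n}$ with $\grad(f)$ transverse (pointing strictly inward along $(\partial_+)^\circ$, strictly outward along $(\partial_-)^\circ$) and with the corner set lying in $f^{-1}(c_k)$; and the second by the usual trick of reparametrising the flow by $-\grad(f)/\|\grad(f)\|^2$ away from the critical set, so that $f$ decreases at unit rate and every trajectory through $M^{c_k+\epsilon}\setminus\bigcup_C\mathcal{N}_{C,n}$ reaches $f^{-1}(c_k-\epsilon)$ or enters some $\mathcal{N}_{C,n}$ in finite time. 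Compactness of $M$ and finiteness of $\text{Critval}(f)$ ensure there is a uniform $\epsilon>0$ that works simultaneously for all $C\in\mathcal{D}_{c_k}$ and all $k$.
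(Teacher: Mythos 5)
Your proposal is correct and follows essentially the same route as the paper's first proof of Theorem \ref{mainthm}: attaching the Morse neighbourhoods along $\partial_-\mathcal{N}_{C,n}$ as the sublevel set crosses each critical value, identifying $H_*(f^{-1}(-\infty,c_+],f^{-1}(-\infty,c_-];\FF)$ with $\bigoplus_{C\in\mathcal{D}_c}H_*(\mathcal{N}_{C,\infty},\partial_-\mathcal{N}_{C,\infty};\FF)$ via the gradient flow and excision, and then combining the resulting long exact sequences (your telescoping subadditivity argument is the same bookkeeping as the paper's computation of $\dim\mathrm{im}\,\delta_{i,c}$). Your extra care about corners, the $\partial_\perp$ piece, and reparametrising the flow fills in details the paper delegates to Remarks \ref{modify} and \ref{compare}.
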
 
\begin{proof}
The first proof we will give of these Morse inequalities follows the approach in the classical case given by attaching handles. By using the gradient flow we see that the topology of $f^{-1}(-\infty, a]$ is unchanged as $a$ increases, except when $a$ passes through a critical value $C \in \mathcal{D}$, and then disjoint Morse neighbourhoods $\mathcal{N}_{C,n}$ for $C \in \mathcal{D}_c$ are attached along $\partial_-\mathcal{N}_{C,n}$. Thus if $c_- = c - \delta_-$ and $c_+ = c + \delta_+$ where $\delta_\pm >0$ are sufficiently small, there is an isomorphism
$$ H_i(f^{-1}(-\infty,c_+],f^{-1}(-\infty,c_-];\FF)  \cong \bigoplus_{C \in \mathcal{D}_{c}} H_i(\mathcal{N}_{C,\infty},\partial_- \mathcal{N}_{C,\infty};\FF)
$$
induced by the gradient flow and excision (cf. Remark \ref{compare}), and therefore a long exact sequence
$$ \cdots \to \bigoplus_{C \in \mathcal{D}_{c}} H_{i+1}(\mathcal{N}_{C,\infty}, \partial_-\mathcal{N}_{C,\infty};\FF) \xrightarrow{\delta_{i,c}}
 H_i(f^{-1}(-\infty,c_-];\FF) \to H_i(f^{-1}(-\infty,c_+];\FF) $$ $$\to \bigoplus_{C \in \mathcal{D}_{c}} H_{i}(\mathcal{N}_{C,\infty}, \partial_-\mathcal{N}_{C,\infty};\FF) \to \cdots $$
which tells us that
$ \dim H_i(f^{-1}(-\infty,c_+];\FF)$ is equal to $$\dim H_i(f^{-1}(-\infty,c_-];\FF) - \dim \text{im} \delta_{i,c}  - \dim \text{im} \delta_{i-1,c}  + \sum_{C \in \mathcal{D}_{c}} \dim H_{i}(\mathcal{N}_{C,\infty}, \partial_-\mathcal{N}_{C,\infty};\FF) 
$$
for each $i$ and $c$.
By combining these long exact sequences for $c \in \text{Critval}(f)$ we obtain the descending Morse inequalities
$$  P_t(M) = \sum_{C \in \mathcal{D}}  P_t(\mathcal{N}_{C,\infty},\partial_- \mathcal{N}_{C,\infty})  \, - \, (1+t)R_{\downarrow}(t)\,\,\,\, \,\,\,  \mbox{ where } R_{\downarrow}(t) \geqslant 0;
$$
here $R_{\downarrow}(t)$ is given by $\sum_{i\geqslant 0} t^i \sum_{c \in \text{Critval}(f)}  \dim \text{im} \delta_{i,c}$. 
The proof of the ascending Morse inequalities is similar.
\end{proof}

\begin{rem}
As was noted in the introduction, when $M$ is oriented the descending Morse inequalities are equivalent to 
 the ascending Morse inequalities
$$  P_t(M) = \sum_{C \in \mathcal{D}}  P_t(\mathcal{N}_{C \infty},\partial_+\mathcal{N}_{C \infty})  \, - \, (1+t)R_{\uparrow}(t)\,\,\,\, \,\,\,  \mbox{ where } R_{\uparrow}(t) \geqslant 0
$$
since by Poincar\'e duality
$P_t(M) = t^{\dim M} P_{(1/t)}(M)$, and by Alexander-Spanier
duality (cf. \cite{Hatcher} Theorem 3.43) we have $P_t(\mathcal{N}_{C \infty},\partial_- \mathcal{N}_{C \infty}) = t^{\dim M}  P_{(1/t)}(\mathcal{N}_{C \infty},\partial_+ \mathcal{N}_{C \infty})$. 
\end{rem}

\section{Morse inequalities via Witten's deformation technique} \label{sec:witten}

In this section we will give another proof of our main result, Theorem \ref{mainthm}, using the alternative approach to proving the classical Morse inequalities (with real coefficients) pioneered by Witten \cite{Witten82}. He made use of supersymmetric quantum mechanics, specifically a supersymmetric non-linear sigma model with target space $M$, which as before we assume to be a compact Riemannian manifold (without boundary). The Hilbert space of this theory is canonically isomorphic to the space of differential forms $\Omega^*(M)$ with the supercharges corresponding to the exterior derivative $d$ and codifferential $d^*$. The Hamiltonian is therefore the Laplacian (or Laplace-Beltrami operator) $\Delta = dd^* + d^* d$  on the space of differential forms. This is a positive, essentially self-adjoint operator on the $L^2$ closure of the space of differential forms on $M$. The zero energy states are harmonic forms, and so, by the usual arguments from Hodge theory, the zero energy subspace is canonically isomorphic to the de Rham cohomology of $M$. 

By adding a superpotential $t f$ (for a Morse function $f$ and constant $t>0$), Witten deformed the theory, while preserving the supersymmetry, with the supercharges now $d_t^W = e^{-tf} d e^{tf}$ and $d_t^{W*} = e^{tf} d^* e^{-tf}$ respectively. Since the map $\omega \mapsto e^{tf}\omega$ is invertible, the cohomology, and hence the number of zero energy states, is unchanged by this deformation. The new Hamiltonian is given by the deformed Laplacian $\Delta_t = d_t^W d_t^{W*} + d_t^{W*} d_t^W$ and contains a potential term $t^2  |\!| \grad(f)   |\!|^2$ which means that, for $t \gg 1$, low energy states must be localised near critical points of $f$. As a result, the Hamiltonian can be approximated for $t \gg 1$ by a direct sum over supersymmetric harmonic oscillators associated to each critical point. Within this approximation, there exists a single zero energy state for each oscillator, which will be a $p$-form if the Hessian of the associated critical point has $p$ negative eigenvalues. Since the exact zero energy states must form a subspace of these approximate zero energy states, the Morse inequalities follow.

In this section, we outline a similar approach to rederive Theorem \ref{mainthm}. This mirrors the strategy used in \cite{BS} to prove Novikov inequalities in the presence of \lq minimal degeneracy' (cf.\! \cite{K}) using a deformed Laplacian. We construct extended Morse neighbourhoods $\tilde{\mathcal{N}}_{C}$  by attaching cylindrical ends (on which the function grows quadratically) to smooth Morse neighbourhoods $\mathcal{N}_{C,n}$. It is enough to consider a single smooth Morse neighbourhood $\mathcal{N}_{C,n_C}$ with $n_C$ fixed for each connected component $C \subseteq \text{Crit}(f)$.

As we shall see, the deformed $L^2$ cohomology of the extended Morse neighbourhoods is isomorphic to $H^*(\mathcal{N}_{C,n_C},\partial_- \mathcal{N}_{C,n_C};\RR)$. At low energies and large $t$, a deformed Laplacian $\Delta_t(M)$ acting on the manifold $M$ can be modelled by a direct sum of the same deformed Laplacian $\oplus_{C \in \mathcal C} \Delta_t(\tilde{\mathcal{N}}_{C,n_C}) $ on a set of extended Morse neighbourhoods. As in Witten's original argument, the zero energy states of the model Hamiltonian give an upper bound on the number of zero energy states of the deformed Laplacian and hence we will obtain another proof of Theorem \ref{mainthm}.

\begin{rem} \label{boundaryregionpropn}  The gradient flow of $f$ combined with local analysis near the corners $f^{-1}(c) \cap \partial \mathcal{N}_{C,n_C}$ of  $\mathcal{N}_{C,n_C}$ 
can be used to show that  there exists a smooth embedding
$$ \phi : \partial \mathcal{N}_{C,n_C} \times (1-\varepsilon, 1] \to \mathcal{N}_{C,n_C} $$
for sufficiently small $\varepsilon > 0$, such that 

(a) $\phi\left(\partial \mathcal{N}_{C,n_C} \times \{1\}\right) = \partial \mathcal{N}_{C,n_C}$ and

(b) for all $x \in \partial \mathcal{N}_{C,n_C}$ and $s \in (1-\varepsilon, 1]$, then $f(\phi(x,s)) = c + s^2 f_0 (x)$, where $c = f(C)$ and $c + f_0 (x)$ is the restriction of $f$ to  $\partial \mathcal{N}_{C,n_C}$.
\end{rem}

\begin{defn} \label{extendedMn}
Let $\mathcal{N}_{C,n_C}$ be a smooth Morse neighbourhood (Definition \ref{smoothMn}) of some connected component $C \in \mathcal{D}$ of the critical set $\text{Crit}(f)$. Then a corresponding {\em extended Morse neighbourhood} is 
$$\tilde{\mathcal{N}}_{C,n_C} = \mathcal{N}_{C,n_C} \cup (\partial \mathcal{N}_{C,n_C} \times (1-\varepsilon,\infty))$$
where we identify $\partial \mathcal{N}_{C,n_C} \times (1-\varepsilon,1]$ with its image in $\mathcal{N}_{C,n_C}$ under a smooth embedding $\phi$ as in Remark \ref{boundaryregionpropn}. We shall refer to 
$\mathcal{T}_C = \phi(\partial \mathcal{N}_{C,n_C} \times (1-\varepsilon,1])$ as a boundary region of $\mathcal{N}_{C,n_C}$
and to $\tilde {\mathcal {T}}_{C} = (\partial \mathcal{N}_{C,n_C} \times (1-\varepsilon,\infty))$ as the extended boundary region of 
 $\tilde{\mathcal{N}}_{C,n_C}$.

The smooth function $f$ extends to this extended Morse neighbourhood by defining
$$f(x,s) = c + s^2 f_0(x)$$
where $c = f(C)$ for all $x \in \partial \mathcal{N}_{C,n_C}$ and $s \in (1-\varepsilon,\infty)$. As before $c + f_0(x)$ is the restriction of $f$ to $\partial \mathcal{N}_{C,n_C}$. 
\end{defn}
We can now construct a Riemannian metric $\tilde g^{C}$ on $\tilde{\mathcal{N}}_{C,n_C}$ 
 such that
\begin{enumerate} [label=(\roman*)]
\item  $\tilde g^{C}$ agrees with the metric $g$ upon restriction to $\mathcal{N}_{C,n_C}\textbackslash \mathcal T_{C}$;
\item on $\partial \mathcal{N}_{C,n_C} \times  [1 - 3\varepsilon / 4, \infty)$, we have
$$\tilde g^{C}  = s^2 g |_{ \partial \mathcal{N}_{C,n_C}} \oplus g_{\RR},$$
where $s \in  [1 - 3\varepsilon / 4, \infty)$ and $g_{\RR} = ds^2$ is the standard  metric on $\RR$.
\end{enumerate}
We can also choose a smooth metric $\tilde g$ on $M$ which agrees with $\tilde g^{C}$ within each Morse neighbourhood  $\mathcal{N}_{C,n_C}$, by modifying $g$ close to the Morse neighbourhoods $\mathcal{N}_{C,n_C}$.

\begin{remark}
 The Morse neighbourhoods $\{\mathcal{N}_{C,n_C} : C \in \mathcal C\}$ continue to satisfy the properties of Morse neighbourhoods with respect to this new metric $\tilde g$.
\end{remark}
\begin{defn} Following \cite{BS}
let
$$\tilde \Omega^*_f ( \tilde{\mathcal{N}}_{C}) = \{ \zeta \in L^2 \Omega^* (\tilde{\mathcal{N}}_{C}) : d_f \zeta \in L^2 \Omega^* (\tilde{\mathcal{N}}_{C})\},$$
where $d_f = e^{-f} d e^{f}$ and $L^2 \Omega^* (\tilde{\mathcal{N}}_{C})$ is the space of square integrable differential forms on $\tilde{\mathcal{N}}_C$ with square-integrability defined using the standard inner product of differential forms induced by the metric $\tilde g_C$.

Then the deformed $L^2$ cohomology $H^* (\tilde \Omega^*_f ( \tilde{\mathcal{N}}_{C,n}) ,d_f)$ is defined to be the cohomology of the complex
$$ 0 \rightarrow \tilde \Omega^0_f ( \tilde{\mathcal{N}}_{C}) \overset{d_f}{\longrightarrow} \tilde \Omega^1_f ( \tilde{\mathcal{N}}_{C}) \overset{d_f}{\longrightarrow} \dots \overset{d_f}{\longrightarrow} \tilde \Omega^n_f ( \tilde{\mathcal{N}}_{C}) \longrightarrow 0.
$$
\end{defn}

\begin{lemma} \label{lemma:L2}
The deformed $L^2$ cohomology $H^* (\tilde \Omega^*_f ( \tilde{\mathcal{N}}_{C}) ,d_f)$ 
 is isomorphic to the relative de Rham cohomology $H^i(\mathcal{N}_{C,n_C}, \partial_- \mathcal{N}_{C,n_C})$. 
\end{lemma}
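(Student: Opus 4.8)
The strategy is to split the extended Morse neighbourhood $\tilde{\mathcal{N}}_{C}$ into the compact piece $\mathcal{N}_{C,n_C}$ and the infinite cylindrical end $\tilde{\mathcal{T}}_C = \partial\mathcal{N}_{C,n_C} \times (1-\varepsilon,\infty)$, compute the deformed $L^2$ cohomology of each, and glue via a Mayer--Vietoris argument. First I would recall that, after conjugating by $e^f$, the complex $(\tilde\Omega^*_f(\tilde{\mathcal{N}}_C),d_f)$ is isomorphic to the complex of forms $\zeta$ with $e^f\zeta, e^f d\zeta \in L^2$ and ordinary differential $d$; equivalently it is the $d$-cohomology with a weighted $L^2$ condition, the weight $e^{2f}$ being large exactly where $f$ is large, i.e.\ out along the positive end (on $\partial_+$) and small where $f$ is very negative. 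The decisive input is Remark \ref{boundaryregionpropn} and Definition \ref{extendedMn}: on the cylindrical end $f(x,s) = c + s^2 f_0(x)$ with the product metric $s^2 g|_{\partial\mathcal{N}} \oplus ds^2$, so $|\!|\grad f|\!|^2$ grows without bound as $s\to\infty$ wherever $f_0(x)\neq 0$, forcing $L^2$-harmonic representatives to decay; and where $f_0(x) = 0$ — that is, over $f^{-1}(c)\cap\partial\mathcal{N}_{C,n_C}$, the "neutral" locus — a separate $1$-dimensional analysis is needed.

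The key steps, in order, are: (1) show that the deformed $L^2$ cohomology of the cylindrical end $\tilde{\mathcal{T}}_C$ with a collar removed is concentrated so as to contribute nothing to the gluing — precisely, forms that are $L^2$ on $\partial\mathcal{N}_{C,n_C}\times(a,\infty)$ together with $d_f$ of them must vanish, because on the $\partial_+$ part the weight $e^{2f}$ blows up like $e^{2s^2 f_0}$ and kills everything, while on the $\partial_-$ part $f\to-\infty$ so the unweighted condition is automatic but integration by parts against the explicit $s$-dependence shows the cohomology is that of $\partial_-\mathcal{N}_{C,n_C}$ relative to nothing, matching up with the compact piece; (2) establish a deformation-invariance/excision statement: since on $\mathcal{N}_{C,n_C}\setminus\mathcal{T}_C$ the metric $\tilde g^C$ agrees with $g$ and $f$ is a fixed smooth function, the inclusion of the compact piece induces an isomorphism from $H^*(\tilde\Omega^*_f(\tilde{\mathcal{N}}_C),d_f)$ onto the relative de Rham cohomology of $(\mathcal{N}_{C,n_C},\partial_-\mathcal{N}_{C,n_C})$, where the relative (Dirichlet) boundary condition on $\partial_-$ emerges because there $f\to-\infty$ along the extended end so any $L^2$ form must restrict to a form that is $d_f$-exact in a neighbourhood of $\partial_-$, i.e.\ is killed there; dually, on $\partial_+$ the weight forces Neumann-type behaviour which is cohomologically trivial; (3) assemble (1) and (2) through the Mayer--Vietoris sequence for the cover $\tilde{\mathcal{N}}_C = \mathcal{N}_{C,n_C}^\circ \cup \tilde{\mathcal{T}}_C$, using that the overlap retracts $d_f$-equivariantly onto a slice $\partial\mathcal{N}_{C,n_C}$ and that the end contributes trivially, to conclude $H^*(\tilde\Omega^*_f(\tilde{\mathcal{N}}_C),d_f) \cong H^*(\mathcal{N}_{C,n_C},\partial_-\mathcal{N}_{C,n_C};\RR)$.

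The main obstacle will be step (1): the analysis on the cylindrical end when $f_0$ changes sign and, in particular, vanishes along the codimension-one locus $f^{-1}(c)\cap\partial\mathcal{N}_{C,n_C}$. There the potential $t^2|\!|\grad f|\!|^2$ does not grow, so one cannot simply invoke exponential localisation; instead one must separate variables in $s$ and analyse the resulting one-dimensional Schr\"odinger-type operator on $(1-\varepsilon,\infty)$ with potential $\sim s^2$ in the directions where $f_0\neq 0$ and flat where $f_0 = 0$, and show that the only $L^2$ solutions are those matching, at $s=1$, the Dirichlet datum on $\partial_-\mathcal{N}_{C,n_C}$ and reproducing the relative cohomology — essentially a weighted-$L^2$ Hodge-theory computation of the half-line with a harmonic-oscillator tail. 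This is exactly the type of computation carried out in \cite{BS}, on which we model our argument, so the remaining work is to check that the product structure from Remark \ref{boundaryregionpropn} makes their estimates apply verbatim, and that the Dirichlet/Neumann dichotomy on $\partial_\mp\mathcal{N}_{C,n_C}$ is correctly induced. A parallel argument with $f$ replaced by $-f$ gives the companion statement with $\partial_+$, which we will use later; I would remark on this but not repeat the proof.
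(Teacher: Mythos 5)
Your route differs from the paper's. The paper does not redo any analysis on the cylindrical end: it observes that $\tilde{\mathcal{N}}_{C}$ is a manifold with cylindrical end on which the closed one-form $df$ and the metric $\tilde g^{C}$ are homogeneous of degree $2$ at infinity, invokes Proposition 5.3 of \cite{BS} to conclude $H^*(\tilde\Omega^*_f(\tilde{\mathcal{N}}_{C}),d_f)\cong H^*(\tilde{\mathcal{N}}_{C}, f^{-1}(-\infty,b])$ for $b$ just below $f(C)$, and then identifies the latter with $H^*(\mathcal{N}_{C,n_C},\partial_-\mathcal{N}_{C,n_C})$ by gradient-flow retraction and excision. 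Your Mayer--Vietoris plus separation-of-variables plan amounts to re-proving that input of \cite{BS} from scratch; that is legitimate in principle but substantially more work, and the Mayer--Vietoris sequence for weighted $L^2$ cohomology is itself something you would need to justify (exactness of the cutoff sequence requires bounded derivatives of the partition of unity in the relevant weighted norms).

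More importantly, there is a concrete sign error that makes your step (1) point at the wrong boundary component. The conjugation $\zeta\mapsto e^{f}\zeta=\omega$ intertwines $d_f=e^{-f}de^{f}$ with the ordinary $d$, and carries $\tilde\Omega^*_f(\tilde{\mathcal{N}}_{C})$ onto the complex of forms $\omega$ with $e^{-f}\omega\in L^2$ and $e^{-f}d\omega\in L^2$: the weight is $e^{-2f}$, not $e^{2f}$. This weight blows up where $f\to-\infty$, i.e.\ on the end over $\partial_-\mathcal{N}_{C,n_C}$ (where $f_0<0$ and $f(x,s)=c+s^2f_0(x)\to-\infty$), and decays to zero on the end over $\partial_+\mathcal{N}_{C,n_C}$. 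Your step (1) asserts the opposite (``on the $\partial_+$ part the weight $e^{2f}$ blows up \dots and kills everything, while on the $\partial_-$ part \dots the unweighted condition is automatic''), which, carried through your separation of variables, would compute $H^*(\mathcal{N}_{C,n_C},\partial_+\mathcal{N}_{C,n_C})$ rather than $H^*(\mathcal{N}_{C,n_C},\partial_-\mathcal{N}_{C,n_C})$; your step (2) then asserts, inconsistently with step (1), that the Dirichlet condition sits on $\partial_-$. The correct mechanism is the one forced by the weight $e^{-2f}$: the $L^2$ condition is strong along the $\partial_-$ end (giving the relative condition there) and vacuous along the $\partial_+$ end. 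With that sign fixed, and with the $L^2$ Mayer--Vietoris step supplied, your outline would reach the same conclusion as the paper.
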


\begin{proof}
The extended Morse neighbourhood $\tilde{\mathcal{N}}_{C}$ is a manifold with cylindrical end such that the closed one-form $df$ and the metric $\tilde g^C$ are both homogeneous of degree 2 at infinity. Hence, by Proposition 5.3 of \cite{BS}, the deformed $L^2$ cohomology
$$ H^* (\tilde \Omega^*_f ( \tilde{\mathcal{N}}_{C,n}) ,d_f) \cong H^*( \tilde{\mathcal{N}}_{C,n},f^{-1}((-\infty,b])$$
for any $b$ just less than $f(C)$. But by retraction under gradient flow and excision this is in turn isomorphic to $H^i(\mathcal{N}_{C,n}, \partial_- \mathcal{N}_{C,n})$.
\end{proof}
Our next step is to construct a one-parameter family of deformed Laplacians $\Delta_t(\tilde{\mathcal{N}}_{C})$ on each of the extended Morse neighbourhoods $\tilde{\mathcal{N}}_{C}$, together with a similar one-parameter family of Laplacians $\Delta_t(M)$ on the entire manifold $M$ and show that eigenstates of $\Delta_t(M)$ whose energy vanish if $t \to \infty$ are in one-to-one correspondence with zero energy states of $\oplus_C \Delta_t({\tilde{\mathcal{N}}_{C}})$.

This will require us to prove that there do \emph{not} exist any non-zero energy states of $\oplus_C \Delta_t({\tilde{\mathcal{N}}_{C}})$ whose energy vanishes in the $t \to \infty$ limit. We therefore define the deformed Laplacians based not only on a deformed exterior derivative $d_t$, but also on a $t$-dependent metric $\tilde g^C_t$. By doing so, we will be able to show that the spectrum of $\Delta_t(\tilde{\mathcal{N}}_{C})$ is independent of $t$, and hence any eigenstate of $\Delta_t(\tilde{\mathcal{N}}_{C})$ that has zero energy in the $t \to \infty$ limit will also have zero energy at any finite $t$. Using basic Hodge theory combined with Lemma \ref{lemma:L2}, the space of these zero energy states will be isomorphic to $H^i(\mathcal{N}_{C,n_C}, \partial_- \mathcal{N}_{C,n_C})$.

Let $\tau_t: \tilde{\mathcal{N}}_{C,n_C} \to \tilde{\mathcal{N}}_{C,n_C}$ form a one parameter family of diffeomorphisms for $t> 0$ with
$
\tau_t (p) = p
$
 for all $p \in \tilde{\mathcal{N}}_{C,n_C} \textbackslash\tilde{\mathcal{T}}_{C}$, while
$$
\tau_t (x,s)  = (x, \xi_t(s)),
$$
 for all $(x,s) \in \tilde{\mathcal{T}}_{C}$, where the smooth monotonically-increasing function $\xi_t: \RR \to \RR$ satisfies $\xi_t(s) = s$ for $s\leq 1 - \varepsilon/2$ and $\xi_t(s) = \sqrt{t} s$ for $s \geq 1 - \varepsilon / 4$. 

For 
$s > 1 - \varepsilon/4$ we have 
\begin{align} \label{eq:pullback}
\frac{1}{t}\tau_t^* \tilde g =  \tilde g \,\,\,\,\text{and} \,\,\,\,\tau_t^* f = t (f -c) + c
\end{align}
where $c = f(C)$. 
\begin{defn} \label{defn:laplaceNC} Let $d_t = e^{- \tau_t^* f} d e^{\tau_t^* f}$. The $t$-dependent Riemannian metric $\tilde g^C_t = \tau_t^* \tilde g^C$ induces a Hodge star operator $\star_t$ and hence a co-differential
$$
d_t^* = (-1)^k e^{\tau_t^* f} \star_t^{-1} d \star_t e^{-\tau_t^* f},
$$
which is the adjoint of $d_t$ with respect to the inner product on $\Omega^*(\tilde{\mathcal{N}}_C)$ induced by the metric $\tilde g^C_t$. We can then define the deformed Laplacian $$\Delta_t(\tilde{\mathcal{N}}_{C,n_C}) = d_t^* d_t + d_t d_t^*.$$

Let $\tilde \Omega^*_t = \{ \omega \in L^2\Omega(\tilde{\mathcal{N}}_C) : \Delta_t \omega \in L^2\Omega(\tilde{\mathcal{N}}_C)\}$. 
 Let $\bar \Delta_t(\tilde{\mathcal{N}}_C)$  be the closure of the restriction of $\Delta_t(\tilde{\mathcal{N}}_{C,n_C})$ to $\tilde \Omega^*_t$ in the $L^2$ completion $\overline{L^2\Omega^*(\tilde{\mathcal{N}}_C)}$ of $L^2\Omega^*(\tilde{\mathcal{N}}_C)$.
\end{defn}

\begin{rem}
Here square integrability with respect to the metric $\tilde g^C_t$ is equivalent to
 square integrability with respect to $\tilde g^C$, since these metrics are bounded by constant positive scalar multiples of each other.
\end{rem}

Using the diffeomorphism invariance of the exterior derivative, we see that 
$$ 
\tau_t^* (\Delta_1(\tilde{\mathcal{N}}_{C,n_C}) f) = \Delta_t(\tilde{\mathcal{N}}_{C,n_C}) (\tau_t^* f)
$$ 
and so the spectrum of $\Delta_t(\tilde{\mathcal{N}}_{C,n_C})$ is independent of $t$. Since $\Delta_t(\tilde{\mathcal{N}}_{C,n_C})$ is an elliptic operator with discrete spectrum (Proposition 4.5 of \cite{BS}), then by standard Hodge theory arguments (Proposition 5.2 of \cite{BS}) and Lemma \ref{lemma:L2} we have 
$$ 
 \label{eq:dimker}
\text{Ker}(\Delta_t(\tilde{\mathcal{N}}_{C,n_C})) \cap \Omega^p(\tilde{\mathcal{N}}_{C,n_C})= \text{Ker}(\Delta_1(\tilde{\mathcal{N}}_{C,n_C}))  \cap \Omega^p(\tilde{\mathcal{N}}_{C,n_C}) = H^p (\tilde \Omega^*_f ( \tilde{\mathcal{N}}_{C,n_C}) ,d_f) = H^p(\mathcal{N}_{C,n_C}, \partial_- \mathcal{N}_{C,n_C}).
$$ 
We can then define a deformed Laplacian on the manifold $M$ as follows. 
\begin{defn}
Let the smooth function $f_t: M \to \RR$ satisfy $f_t(p) = t f(p)$ for $p \in M \textbackslash \bigcup_C \mathcal{N}_{C,n_C}$, while $f_t (x) = (t - 1) c + \tau_t^* f$  for $x \in \mathcal{N}_{C,n_C}$. 
Let the metric $\tilde g_t$ satisfy $$\tilde g_t(p) = t \tilde g(p)$$ for $p \in M \textbackslash \bigcup_C \mathcal{N}_{C,n_C}$, while
$\tilde g_t = \tau_t^* \tilde g$ on  $\mathcal{N}_{C,n}$.
Smoothness at $\partial \mathcal{N}_{C,n_C}$ follows from \eqref{eq:pullback}.
Define  $d_t = e^{- f_t} d e^{f_t}$, with the codifferential $d_t^*$ defined, analogously to Definition \ref{defn:laplaceNC}, as the adjoint of $d_t$ with respect to the inner product on $\Omega^*(M)$ induced by $\tilde g_t$. Explicitly
$$
d_t^* = (-1)^k e^{f_t}\star^{-1}_t d \star e^{-f_t},
$$
where $\star_t$ is the Hodge star operator induced by the metric $\tilde g_t$. We therefore define
$$
\Delta_t(M) = d_t d_t^* + d_t^* d_t.
$$
As in Definition \ref{defn:laplaceNC}, we can also define $\bar \Delta_t(M)$ to be the closure of $\Delta_t(M)$ in the $L^2$ completion $\overline{ \Omega^*(M)}$ of $\Omega^*(M)$.
\end{defn}
On both $M$ and $\tilde{\mathcal{N}}_{C,n_C}$
$ 
(\phi, \Delta_t \phi) = (d_t \phi, d_t \phi) + (d_t^* \phi, d_t^* \phi) \geq 0,
$ 
 for all states $\phi$, so
both $\Delta_t(M)$ and $\Delta_t(\tilde{\mathcal{N}}_{C,n_C})$ are positive, densely-defined symmetric operators. It is well-known that their closures $\bar \Delta_t(M)$ and $\bar \Delta_t(\tilde{\mathcal{N}}_{C,n_C})$ are self-adjoint \cite{BMS}.
\begin{remark}
By the elliptic regularity theorem, if $\bar \Delta_t(M) \phi = \lambda \phi$ (respectively  $\bar \Delta_t^{\tilde{\mathcal N}_C} \phi = \lambda \phi$) for $\phi \in \overline{ \Omega^*(M)}$ (respectively  $\phi \in \overline{ L^2\Omega^*(\tilde{\mathcal{N}}_C)}$), then $\phi \in \Omega^*(M)$ (respectively  $\phi \in  L^2\Omega^*(\tilde{\mathcal{N}}_C)$).
\end{remark}

\begin{defn}
For each $C \in \mathcal{D}$, let $J_C: \mathcal{N}_{C,n_C} \to [0,1]$ be  a smooth function such that
for all $x \in \mathcal{N}_{C,n_C} \textbackslash  \mathcal{T}_C 
$ 
and
for all $x = \phi(y,s) \in \mathcal{T}_C$ with $s \leq 1 - \varepsilon/4$ we have $J_C(x) = 1,
$
but
for all $x = \phi(y,s) \in \mathcal{T}_C$ with $s \geq 1 - \varepsilon/8$ we have $J_C(x) = 0.
$ 
We shall also use $J_C$ to denote the functions $J_C: \tilde{\mathcal{N}}_{C,n_C} \to [0,1]$ and  $J_C: M \to [0,1]$ that agree with $J_C : \mathcal{N}_{C,n_C} \to [0,1]$ on $\mathcal{N}_{C,n_C}$ and are zero elsewhere. Let $J_0 : M \to [0,1]$ satisfy
\begin{align}
J_0 = \sqrt{ 1 - \sum_C J_C^2 },
\end{align}
while for all $C \in \mathcal{D}$ we define $\bar J_{C} : \tilde{\mathcal{N}}_{C,n_C} \to [0,1]$ by
\begin{align}
\bar J_C = \sqrt{ 1 - J_C^2}.
\end{align}
Then $\{ J_C^2 , \bar J_C^2\}$ and $\{J_0^2\} \cup \{ J_C^2 : C \in \mathcal{D}\}$ form partitions of unity for $\tilde{\mathcal{N}}_{C,n_C}$ and $M$ respectively.
\end{defn}
\begin{remark}
For all $\psi, \phi \in \Omega^*(\mathcal{N}_{C,n_C})$ and all $C \in \mathcal{D}$ 
$$
(J_C \phi, \Delta_t(M) J_C \psi) =( J_C \phi, \Delta_t(\tilde{\mathcal{N}}_{C,n_C}) J_C \psi),
$$
where on the left (respectively right) hand side $J_C \psi$ and $J_C \phi$ are treated as differential forms on $M$ (respectively $\tilde{\mathcal{N}}_{C,n_C}$) with support only in $\mathcal{N}_{C,n_C}$.
\end{remark}
To complete the proof of Theorem \ref{mainthm}, we need to show that the operator $\oplus_C \Delta_t^{\tilde{\mathcal{N}}_{C,n}}$ approximates the operator $\Delta_t(M)$ at large $t$ in the following sense:
\begin{lemma} \label{lem:hamiltonianestimate}
Let $\bar \Delta_{t,p}({\tilde{\mathcal{N}}_{C,n_C}})$ and $\bar \Delta_{t,p}(M)$ be the restriction of $\bar \Delta_{t}({\tilde{\mathcal{N}}_{C,n_C}})$ and $\bar \Delta_{t}(M)$ to $p$-forms. Moreover, let
$$
\bar \Delta_{t,p}^{\mathcal{D}} = \oplus_{C \in \mathcal{D}} \bar \Delta_{t,p}({\tilde{\mathcal{N}}_{C,n_C}}).
$$
Then
for sufficiently large $t$
$$\text{dim}(\text{Ker}(\bar \Delta_{t,p}^{\mathcal{D}}) = n_{p}$$
where $n_{p}$ is the number of eigenvalues of $\bar \Delta_{t,p}^M$ (counting multiplicities) with eigenvalue less than $1/\sqrt{t}$.
\end{lemma}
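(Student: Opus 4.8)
The plan is to follow the classical Witten localization argument, adapted to the manifold-with-cylindrical-end setting of \cite{BS}. The key point is that the deformed Laplacians $\bar\Delta_t(M)$ and $\bar\Delta_t(\tilde{\mathcal N}_{C,n_C})$ have been arranged so that $\bar\Delta_t(\tilde{\mathcal N}_{C,n_C})$ is isospectral for all $t$ (via the diffeomorphisms $\tau_t$), while the potential term $t^2\|\grad f\|^2$ in $\bar\Delta_t(M)$ forces all low-lying eigenforms to concentrate near $\text{Crit}(f)$ as $t\to\infty$. The partition of unity $\{J_0^2\}\cup\{J_C^2\}$ on $M$ and $\{J_C^2,\bar J_C^2\}$ on each $\tilde{\mathcal N}_{C,n_C}$ is the standard IMS-type gluing device used to compare the two operators.

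First I would establish the lower bound $\dim\text{Ker}(\bar\Delta_{t,p}^{\mathcal D})\le n_p$. Given an orthonormal basis of $\text{Ker}(\bar\Delta_{t,p}^{\mathcal D})=\bigoplus_C \text{Ker}(\bar\Delta_{t,p}(\tilde{\mathcal N}_{C,n_C}))$, multiply the $C$-component of each basis element by $J_C$ and view the result as a $p$-form on $M$ supported in $\mathcal N_{C,n_C}$; since the $J_C$ have disjoint supports, these glued forms are (nearly) orthonormal. Using the IMS localization formula $\sum_\alpha [(\Delta_t, J_\alpha),J_\alpha] = \sum_\alpha |\grad J_\alpha|^2$ (where the commutator produces only a bounded zeroth-order error supported where $dJ_C\ne 0$, i.e. in the region $s\in[1-\varepsilon/4,1-\varepsilon/8]$ which under $\tau_t$ is pushed to scale $\sqrt t$ and contributes $O(1/t)$ after the metric rescaling), I would show that on the span of these glued forms the quadratic form $(\phi,\bar\Delta_{t,p}(M)\phi)$ is $O(1/t)\|\phi\|^2$. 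By the min-max principle this produces at least $n'_p := \dim\text{Ker}(\bar\Delta_{t,p}^{\mathcal D})$ eigenvalues of $\bar\Delta_{t,p}(M)$ below $1/\sqrt t$ for $t$ large, so $n'_p\le n_p$.

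Next, the reverse bound $n_p\le\dim\text{Ker}(\bar\Delta_{t,p}^{\mathcal D})$. Take a unit $p$-form $\phi$ on $M$ with $\bar\Delta_{t,p}(M)\phi=\lambda\phi$, $\lambda<1/\sqrt t$. Decompose $\phi = J_0\phi + \sum_C J_C\phi$; away from the critical set the potential $t^2\|\grad f\|^2\ge ct^2$, so the Garding/IMS estimate forces $\|J_0\phi\|^2 = O(t^{-3/2})$ (the $J_0\phi$ piece has quadratic-form value at least $ct^2\|J_0\phi\|^2 - O(\lambda) - O(1/t)$). Thus $\phi$ is, up to $o(1)$, a sum of the pieces $J_C\phi$, each of which can be regarded as a form on $\tilde{\mathcal N}_{C,n_C}$ with $(J_C\phi,\bar\Delta_{t,p}(\tilde{\mathcal N}_{C,n_C})J_C\phi) = O(1/\sqrt t)\|J_C\phi\|^2$ by the remark preceding the lemma plus the localization error bound. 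Since $\bar\Delta_{t,p}(\tilde{\mathcal N}_{C,n_C})$ has spectrum independent of $t$ and discrete (Prop. 4.5 of \cite{BS}), its smallest nonzero eigenvalue is a fixed $\mu_C>0$; hence for $t$ large enough the projection of $J_C\phi$ onto $(\text{Ker}\,\bar\Delta_{t,p}(\tilde{\mathcal N}_{C,n_C}))^\perp$ is $o(1)$. Assembling this over $C$ shows that the whole space of eigenforms of $\bar\Delta_{t,p}(M)$ with eigenvalue $<1/\sqrt t$ injects (for $t\gg 0$), via near-orthogonal projection, into $\bigoplus_C\text{Ker}(\bar\Delta_{t,p}(\tilde{\mathcal N}_{C,n_C}))=\text{Ker}(\bar\Delta_{t,p}^{\mathcal D})$; this gives $n_p\le n'_p$, completing the equality.

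The main obstacle, as usual in this circle of arguments, is making the cylindrical-end analysis rigorous: one must control the error terms uniformly on the non-compact regions $\tilde{\mathcal T}_C$ and check that the IMS/Garding estimates hold despite the unbounded geometry and the $t$-dependent rescaling $\tau_t$. This is exactly the point where the homogeneity-of-degree-2 hypotheses on $f$ and $\tilde g^C$ at infinity are used, and where one invokes Propositions 4.5, 5.2 and 5.3 of \cite{BS} (discreteness of spectrum, Hodge-theoretic identification of kernels, and the $L^2$-cohomology computation) to avoid redoing that analysis from scratch. A secondary technical point is verifying that the glued forms $J_C\phi$ lie in the domain of the relevant closed operators and that the quadratic-form manipulations are legitimate — but this follows from elliptic regularity (the remark after the definition of $\bar\Delta_t(M)$) together with the fact that each $J_C$ is compactly supported in the $s$-direction.
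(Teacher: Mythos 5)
Your proposal is correct and follows essentially the same route as the paper's proof: IMS localization with the partitions $\{J_0^2\}\cup\{J_C^2\}$ and $\{J_C^2,\bar J_C^2\}$, the $t$-independence (hence fixed spectral gap) of $\bar\Delta_t(\tilde{\mathcal N}_{C,n_C})$ coming from the pullback by $\tau_t$, positivity of $\|df\|^2$ on the supports of $J_0$ and $\bar J_C$, and the min--max principle. The only cosmetic difference is that you phrase the bound $n_p\le\dim\mathrm{Ker}(\bar\Delta_{t,p}^{\mathcal D})$ as a near-isometric injection of the low-energy eigenspace into the kernel, whereas the paper runs the equivalent contradiction argument via the min--max value $\mu_{k_{t,p}+1}$.
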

\begin{proof}
Let $\psi_C \in \text{Ker}(\bar \Delta_{t,p}({\tilde{\mathcal{N}}_{C,n_C}}))$ be normalised such that the inner product $(\psi_C , \psi_C)$ is 1.
Given a Hamiltonian H which is the sum of a Laplace-Beltrami operator plus any first order differential operator and a set of functions $\{J_i\}$ such that $\sum_i J_i^2 = 1$, the INS localisation formula (cf.\! \cite{BS} Lemma 8.2, \cite{CFKS,Shubin} Lemma 3.1) states that
$$
H = \sum_i J_i H J_i  + \frac{1}{2} \sum_i [J,[J,H]]= \sum_i J_i H J_i - \sum_i \lVert dJ_C \rVert^2.
$$
From this we see that
$$ (\bar J_C \psi_C, \bar\Delta_t(\tilde{\mathcal{N}}_{C,n_C})\bar J_C \psi_C) + (J_C \psi_C, \bar\Delta_t(\tilde{\mathcal{N}}_{C,n_C}) J_C \psi_C) = 2(\psi_C, \lVert dJ_C \rVert_t^2 \psi_C) = O(1/t), $$
where we have used the subscript $t$ to indicate that the norm $\lVert dJ_C \rVert_t$ is defined using the metric $\tilde g^C_t$. The last estimate follows because $dJ_C$ is independent of $t$ and bounded and $\tilde g_t = t \tilde g$ everywhere on the support of $dJ_C$. Hence
$$
 (J_C \psi_C, \bar\Delta_t(\tilde{\mathcal{N}}_{C,n_C}) J_C \psi_C)  = O(1/t).
 $$
Since $df \neq 0$ and $\tilde g_t = t \tilde g$ everywhere in the support of $\bar J_C$, we have
\begin{align}
\bar J_C \bar\Delta_t(\tilde{\mathcal{N}}_{C,n_C}) \bar J_C= t \bar J_C \tilde g^{-1}(df,df) \bar J_C + O(1) \geq \varepsilon t \bar J_C^2,
\end{align}
where the last inequality is true for sufficiently large $t$, given any fixed $\varepsilon < \inf_{\text{supp}(\bar J_C)} \tilde g^{-1}(df,df)$. Hence
\begin{align}
\varepsilon t (\bar J_C \psi_C, \bar J_C \psi_C) \leq (\bar J_C \psi_C, \Delta_t(\tilde{\mathcal{N}}_{C,n_C})\bar J_C \psi_C) = O(1/t)
\end{align}
and 
\begin{align}
(J_C \psi_C, J_C \psi_C) = 1 - (\bar J_C \psi_C, \bar J_C \psi_C) = 1 - O(1/t^2).
\end{align}
 
However the support of $J_C$ lies entirely within $\mathcal{N}_C$. Hence
\begin{align}
\frac{(J_C \psi_C, \bar\Delta_t^{M} J_C \psi_C)}{(J_C \psi_C, J_C \psi_C)} = \frac{(J_C \psi_C, \bar\Delta_t(\tilde{\mathcal{N}}_{C,n_C}) J_C \psi_C)}{(J_C \psi, J_C \psi_C)} = O(1/t).
\end{align}
Let 
$k_{t,p} = \text{dim}(\text{Ker}(\bar \Delta_{t,p}^{\mathcal{D}}) = \sum_{C \in \mathcal{D}} \text{dim}( \text{Ker}(\Delta_{t,p}({\tilde{\mathcal{N}}_{C,n_C}}))).$ 
We showed that $k_{t,p}$ was finite in \eqref{eq:dimker}. Let $\mu_n(\bar\Delta_{t,p}(M))$ for $n \in \ZZ_{>0}$ be defined by the following minimax formula
$$
\mu_n(\bar\Delta_{t,p}(M)) = \sup_{\zeta_1, \zeta_2, \dots\zeta_{n-1}} Q(\zeta_1,\zeta_2,\dots \zeta_{n-1};\bar\Delta_{t,p}(M)),
$$
where
$
Q(\zeta_1,\zeta_2,\dots \zeta_{n-1};\bar\Delta_{t,p}(M)) = \inf_\psi \{ (\psi, \bar\Delta_{t,p}(M) \psi) | \,\,\psi \in D(\bar\Delta_{t,p}(M)), \,\lVert \psi \rVert = 1,\,\forall i \,\,(\psi, \zeta_i) = 0 \}.
$
By the spectral theory of self-adjoint operators,
$$
\mu_n = \min (E_n, \inf \sigma_{ess}(\bar\Delta_{t,p}(M)),
$$
where $E_n$ is the $n$th eigenvalue (counting multiplicities) of $\bar\Delta_{t,p}(M)$ (or $E_n =\infty$ if there are fewer than $n$ eigenvalues) and $\sigma_{ess}(\bar\Delta_{t,p}(M))$ is the essential spectrum of $\bar\Delta_{t,p}(M)$. (In fact, since $M$ is compact, $\bar\Delta_{t,p}(M)$ has discrete spectrum). Since  the space
$$
V = \spn\{J_C \psi_C \,:\, \psi_C \in \text{Ker}(\bar \Delta_{t,p}({\tilde{\mathcal{N}}_{C,n_C}})) \,\, C \in \mathcal{D}\}
$$ is $k_{t,p}$-dimensional and satisfies
$$
\sup_{\psi \in V,\,\,\lVert \psi\rVert = 1} (\psi, \bar\Delta_{t,p}(M) \psi) \,\, =\,\, \max_{C \in \mathcal{D}}\,\,\,  \sup_{\psi \in \text{Ker}(\bar \Delta_{t,p}({\tilde{\mathcal{N}}_{C,n_C}}))} \frac{(J_C \psi, \bar \Delta_t(M) J_C \psi)}{(J_C \psi, J_C \psi)}\,\, = \,\, O(1/t),
$$
it follows that 
$
\mu_{k_{t,p}} = O(1/t).
$

We now show that $\mu_{k_{t,p}+1} > 1/\sqrt{t}$  for sufficiently large $t$. Let $V_{[0,2/\sqrt{t}]} \subseteq \overline{\Omega^p(M)}$ be the spectral subspace of $[0,2/\sqrt{t}]$ for the self-adjoint positive operator $\bar\Delta_{t,p}(M)$. We assume that $\text{dim}(V_{[0,2/\sqrt{t}]}) \geq k+1$ and derive a contradiction for sufficiently large $t$.

Let $\phi \in V_{[0,2/\sqrt{t}]}$ have $(\psi,\psi) = 1$. By almost identical arguments to the ones above
$$
 \sum_C (J_C \phi, \bar\Delta_t({M}) J_C \phi) + (J_0 \phi, \bar\Delta_t({M}) J_0 \phi) - 2\sum_C (\phi, \lVert dJ_C \rVert_t^2 \phi) \leq \frac{2}{\sqrt{t}}.
$$
Because $J_0 \Delta_t({M}) J_0 \geq \varepsilon t$ for sufficiently large $t$ at fixed sufficiently small $\varepsilon$,
$$
\sum_C( J_C \phi, J_C \phi) = 1 - (J_0 \phi, J_0 \phi) = 1 - O(1/t^{3/2})
$$
and
$$
\frac{(\oplus_C J_C \phi, \bar \Delta_{t,p}^{\mathcal{D}} \oplus_C J_C \phi)}{(\oplus_C J_C \phi, \oplus_C J_C \phi)} = \frac{\sum_C( J_C \phi, \Delta_t({M}) J_C \phi)}{\sum_C( J_C \phi, J_C \phi)} = O(1/t^{1/2}).
$$
However the spectrum of $\Delta_{t,p}^{\mathcal{D}}$ is discrete and independent of $t$. Hence, if $t$ is sufficiently large, then $2 \sqrt{t}$ will be less than the minimum non-zero eigenvalue of $\Delta_{t,p}^{\mathcal{D}}$. The only possibility would be to have
$$
\text{dim}(\text{Ker}(\bar \Delta_{t,p}^{\mathcal{D}})) \geq k_{t,p}+1,
$$
giving our desired contradiction. It follows that $\mu_{k_{t,p}+1} \geq 2\sqrt{t}$ for sufficiently large $t$ and $\bar \Delta_{t,p}^{\mathcal{D}}$ must have a discrete spectrum with exactly $k_{t,p}$ eigenvalues below $1/\sqrt{t}$, which completes the proof.
\end{proof}

\textit{Proof of Theorem \ref{mainthm}}
With Lemma \ref{lem:hamiltonianestimate} in hand, a proof of Theorem \ref{mainthm} follows straightfowardly. By standard Hodge-theoretic arguments, there is one zero energy state of $\bar \Delta_{t,p}^M$ for each cohomology class for the deformed exterior derivative $d_t$. However multiplication by $e^{-tf}$ gives an isomorphism between this deformed cohomology and the ordinary de Rham cohomology. From this, together with Lemmas \ref{lemma:L2} and \ref{lem:hamiltonianestimate}, we immediately obtain weak Morse inequalities
\begin{align} \label{eq:weak}
P_t(M) = \sum_{C \in \mathcal{D}}  P_t(\mathcal{N}_{C,\infty},\partial_- \mathcal{N}_{C,\infty}) - S(t),
\end{align}
where the coefficients of $P_t(M)$ count the zero energy states of $\bar \Delta_{t,p}(M)$, while $P_t(\mathcal{N}_{C,\infty},\partial_- \mathcal{N}_{C,\infty})$ counts the low energy states of $\bar \Delta_{t,p}(M)$, so $S(t)$ has non-negative coefficients.

The strong Morse inequalities (Theorem \ref{mainthm}) also follow by standard arguments, which we sketch here. From a physics perspective, the strong inequalities arise because non-zero energy states in a supersymmetric system always come in equal energy pairs, one bosonic and one fermionic \cite{Witten82a}. More specifically, let $\phi \in \Omega^*(M)$ satisfy $\Delta_t(M) \phi = E \phi$ for some $E > 0$. We can always write
$$
\phi = \frac{1}{E}(d_t^* d_t \phi + d_t d_t^* \phi).
$$
Since
$$[d_t, \Delta_t(M)] = [d_t^*, \Delta_t(M)] = 0,
$$
$d_t d_t^* \phi$ and $d_t^* d_t \phi$ are respectively exact and co-exact eigenstates with the same eigenvalue. Hence we can always choose an eigenbasis for the low energy states  of $\Delta_t(M)$ such that every non-zero energy eigenstate is either exact or co-exact. Given an $d_t$-exact $p$-form eigenstate $d\psi$, the $(p-1)$-form $d_t^*d_t\psi$ is a co-exact eigenstate with the same eigenvalue. Similarly given a co-exact $p$-form eigenstate $d_t^*\chi$, the $(p+1)$-form $d_t d_t^* \chi$ is an exact eigenstate with the same eigenvalue.

We therefore obtain isomorphisms between the co-exact non-zero energy eigenspaces of $p$-forms and the exact non-zero energy eigenspaces of $(p+1)$-forms. It follows that we can rewrite \eqref{eq:weak} with $S(t) = (1+t) R(t)$ where $R(t) = r_p t^p$ has the non-negative coefficients 
$$
r_p = \text{dim}\left(\spn\left\{ \phi \,:\, \Delta_t(M) \phi  = E \phi,\,d_t^* \phi = 0,\,0<E\leq 1/\sqrt{t}\right\}\right). 
$$
This gives us an alternative route to Theorem \ref{mainthm}. \hfill\qedsymbol

\section{Morse stratifications and Morse covers} \label{sec:stratifications}

This section 
 generalises the construction of Morse stratifications, and the resulting proof of the Morse inequalities, to the situation where $f \colon M \to \RR$ is any smooth real-valued function on a compact Riemannian manifold $M$  whose critical locus $\text{Crit}(f)$ has finitely many connected components, providing a third proof of Theorem \ref{mainthm}. It also associates  to suitable systems of Morse neighbourhoods open covers of $M$ (see Definition \ref{Ms}) and decompositions of $M$ into submanifolds with corners (see Remark \ref{decomp}). This will lead to yet another proof of Theorem \ref{mainthm} in $\S$4, and  in $\S$5 and $\S$6 to a refined version of this theorem which generalises the Morse-Witten complex.

As before let $\mathcal{D} = \bigsqcup_{c \in \text{Critval}(f)} \mathcal{D}_c$ be the finite set of connected components of $\text{Crit}(f)$, where
$$ \mathcal{D}_c = \{ C \in \mathcal{D} : C \subseteq f^{-1}(c)\}.$$
Let $\mathcal{N} = \{\mathcal{N}_{C,n} : C \in \mathcal{D} ,  n \geqslant 0 \} $ be a  system of strict Morse neighbourhoods for $f$ satisfying $\mathcal{N}_{C,n} \cap \mathcal{N}_{C',n'} = \emptyset $ unless $C=C'$.

\begin{defn} \label{defn3.1}
If $C \in \mathcal{D}$ and $j\geqslant 0$ we will say that the downwards gradient flow $\{ \psi_t(x): t \geqslant 0\}$ (where $\psi_0(x) = x$) for $f$ from $x$ meets $\mathcal{N}_{C,j} $ if there is some $t_0 \geqslant 0$ such that $\psi_{t_0}(x) \in \mathcal{N}_{C,j}$. 
 Let
$$\mathcal{W^+\!N}_{C,j} = \{ x \in M: \mbox{ the downwards gradient flow for $f$ from $x$ meets $\mathcal{N}_{C,j}$} \}$$
and 
$$\mathcal{W^-\!N}_{C,j} = \{ x \in M: \mbox{ the upwards gradient flow for $f$ from $x$ meets $\mathcal{N}_{C,j}$} \}.$$
Similarly let 
$$\mathcal{W^+\! N}^\circ_{C,j} = \{ x \in M: \mbox{ the downwards gradient flow for $f$ from $x$ meets $\mathcal{N}^\circ_{C,j}$} \}$$
and 
$$\mathcal{W^-\!N}^\circ_{C,j} = \{ x \in M: \mbox{ the upwards gradient flow for $f$ from $x$ meets $\mathcal{N}_{C,j}^\circ$} \}.$$
\end{defn}

\begin{lemma}  \label{lem3.1}
If $C \in \mathcal{D}$ and $j \geq 0$ then \\ (i)
$\mathcal{W^+\!N}_{C,j}$ is a locally closed submanifold of $M$ with corners, of codimension 0, having interior $\mathcal{W^+\! N}^\circ_{C,j} $, boundary $$\partial_-\mathcal{N}_{C,j} \cup \partial _\perp \mathcal{W^+\!N}_{C,j},
$$ where $ \partial _\perp \mathcal{W^+\!N}_{C,j}
$ is the union of the upwards trajectories under the gradient flow for $f$ of $\partial _- \mathcal{N}_{C,j} \cap \partial _+ \mathcal{N}_{C,j}
$, and corners given,  as for $\mathcal{N}_{C,n}$ itself, by  (the connected components of) the intersection $\partial _- \mathcal{N}_{C,j} \cap \partial _\perp \mathcal{W^+\!N}_{C,j}
= \partial _- \mathcal{N}_{C,j} \cap 
\partial _+ \mathcal{N}_{C,j}
$.
\\
(ii) The downwards Morse flow induces a retraction of 
$\mathcal{W^+\!N}_{C,j}$ onto $\mathcal{N}_{C,j}$.
\end{lemma}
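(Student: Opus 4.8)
The plan is to prove both parts by a careful local analysis of the downwards gradient flow near the boundary pieces of the strict Morse neighbourhood $\mathcal{N}_{C,j}$, together with a flow-box (collar) argument away from the critical set. First I would set up notation: write $\psi_t$ for the downwards gradient flow of $f$, so that along any trajectory $f$ is nonincreasing and strictly decreasing away from $\text{Crit}(f)$. Since $\mathcal{N}_{C,j}$ is a strict Morse neighbourhood containing only the critical points in $C$, the only way the downwards flow from a point $x$ can fail to meet $\mathcal{N}_{C,j}$ is if the trajectory exits the region near $C$ without entering $\mathcal{N}_{C,j}$; the key control is condition (c) in Definition~\ref{strictMn}, which says $\grad(f)$ points strictly inward along $(\partial_+\mathcal{N}_{C,j})^\circ$ and strictly outward along $(\partial_-\mathcal{N}_{C,j})^\circ$. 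The set $\mathcal{W^+\!N}_{C,j}$ is then $\mathcal{N}_{C,j}$ together with all points above it whose downward trajectory funnels in through $(\partial_+\mathcal{N}_{C,j})^\circ$, plus points that flow in along $\partial_\perp$-directions.

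For part (i), I would first show $\mathcal{W^+\!N}_{C,j}$ is locally closed and identify its interior and boundary. The interior $\mathcal{W^+\!N}^\circ_{C,j}$ consists of points whose downward flow meets the open neighbourhood $\mathcal{N}^\circ_{C,j}$; by continuous dependence of the flow on initial conditions and the inward-pointing condition on $(\partial_+\mathcal{N}_{C,j})^\circ$, this is an open set. A point lies in $\mathcal{W^+\!N}_{C,j}\setminus\mathcal{W^+\!N}^\circ_{C,j}$ precisely when its downward trajectory first touches $\mathcal{N}_{C,j}$ at $\partial\mathcal{N}_{C,j}$ and never enters the interior; using the gradient conditions, such a first contact point must lie on $\partial_-\mathcal{N}_{C,j}$ (where the trajectory is tangent from outside, pointing outward) or on the corner locus, and tracing backward along the flow from the corner $\partial_-\mathcal{N}_{C,j}\cap\partial_+\mathcal{N}_{C,j}$ gives exactly the "wall" $\partial_\perp\mathcal{W^+\!N}_{C,j}$. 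To get the manifold-with-corners structure I would use the flow to build an explicit chart: away from $\text{Crit}(f)$, choose a small codimension-one slice transverse to $\grad(f)$ and use $(\text{slice point}, \text{flow time})$ coordinates; the inward-pointing transversality of $\grad(f)$ to $(\partial_+\mathcal{N}_{C,j})^\circ$ makes $\partial_+\mathcal{N}_{C,j}$ disappear into the interior under downward flow, so the only boundary contributions that survive are $\partial_-\mathcal{N}_{C,j}$ itself and the flowed-out image $\partial_\perp\mathcal{W^+\!N}_{C,j}$ of the corner, which meet along $\partial_-\mathcal{N}_{C,j}\cap\partial_+\mathcal{N}_{C,j}$, giving the claimed corners. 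Near $C$ itself the neighbourhood is already $\mathcal{N}_{C,j}$, so no new boundary is created there.

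For part (ii), the retraction is the obvious one: flow downward until you first reach $\mathcal{N}_{C,j}$, i.e. define $r(x)=\psi_{\tau(x)}(x)$ where $\tau(x)=\inf\{t\geq 0:\psi_t(x)\in\mathcal{N}_{C,j}\}$, and interpolate $H_s(x)=\psi_{s\tau(x)}(x)$ for $s\in[0,1]$. The content is to show $\tau$ is well-defined (finite) and continuous on $\mathcal{W^+\!N}_{C,j}$, that $H_s$ maps $\mathcal{W^+\!N}_{C,j}$ into itself for every $s$, and that $r$ fixes $\mathcal{N}_{C,j}$ pointwise. Finiteness of $\tau$ is the definition of $\mathcal{W^+\!N}_{C,j}$. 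Continuity of $\tau$ is where the inward-transversality is used again: at a first-contact point on $(\partial_+\mathcal{N}_{C,j})^\circ$ the flow crosses the boundary transversally, so $\tau$ is continuous there by the implicit function theorem; at first-contact points on $\partial_-\mathcal{N}_{C,j}$ or on $\partial_\perp\mathcal{W^+\!N}_{C,j}$ the trajectory stays inside for all larger time (using that $\grad(f)$ points outward along $(\partial_-\mathcal{N}_{C,j})^\circ$ means a trajectory already in $\mathcal{N}_{C,j}$ cannot leave through $\partial_-$, and the $\partial_\perp$ wall is flow-invariant by construction), so $\tau$ is upper and lower semicontinuous there as well. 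That $H_s$ preserves $\mathcal{W^+\!N}_{C,j}$ is immediate since $H_s(x)$ has the same downward trajectory tail as $x$.

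The main obstacle I expect is the manifold-with-corners claim in part (i), specifically showing that $\partial_\perp\mathcal{W^+\!N}_{C,j}$ — the upward flowout of the corner $\partial_-\mathcal{N}_{C,j}\cap\partial_+\mathcal{N}_{C,j}$ — is genuinely an embedded submanifold with boundary and that it meets $\partial_-\mathcal{N}_{C,j}$ cleanly along that corner, with no extra self-intersections or accumulation from above. This requires knowing that the upward flowout stays away from $\text{Crit}(f)$ until it exits a neighbourhood of $C$, or more precisely that within $\mathcal{W^+\!N}_{C,j}$ the flow is nonsingular; since all of $\mathcal{W^+\!N}_{C,j}$ lies on downward trajectories ending in $\mathcal{N}_{C,j}$ and $\mathcal{N}_{C,j}$ contains no critical points other than $C$, while points of $\mathcal{W^+\!N}_{C,j}\setminus\mathcal{N}_{C,j}$ strictly decrease $f$ toward $\partial_+\mathcal{N}_{C,j}\subseteq f^{-1}(c,\infty)$, the flowout indeed meets no critical point, and the flow-box coordinates give the embedded submanifold-with-boundary structure. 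I would also need to invoke compactness of $M$ to ensure the flowout is closed in the relevant open set, and here Remark~\ref{Sardrem}'s control on $\|\grad(f)\|$ near the corner level set $f^{-1}(c)$ is the natural tool to rule out pathological accumulation.
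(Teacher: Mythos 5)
The paper states Lemma \ref{lem3.1} without proof, so there is no official argument to compare against; your overall strategy (flow-box coordinates away from $\text{Crit}(f)$, transversality of the downwards flow to $(\partial_+\mathcal{N}_{C,j})^\circ$, identification of $\mathcal{W^+\!N}_{C,j}$ as $\mathcal{N}_{C,j}$ together with the upward flow-out of $\partial_+\mathcal{N}_{C,j}$, and the first-hitting-time retraction $r(x)=\psi_{\tau(x)}(x)$) is the standard one and is what the surrounding text of $\S$\ref{sec:neighbourhoods} implicitly relies on. However, one step of your argument is wrong as stated. You claim that at first-contact points on $\partial_-\mathcal{N}_{C,j}$ ``the trajectory stays inside for all larger time'' because ``a trajectory already in $\mathcal{N}_{C,j}$ cannot leave through $\partial_-$.'' This contradicts the geometry the paper is using: $\partial_-\mathcal{N}_{C,j}$ is the \emph{outflowing} boundary for the downwards flow (see the sentence after Definition \ref{defnwc}: the downwards flow leaves $\mathcal{N}_{C,n}$ exactly when it meets $f^{-1}(-\infty,c)$, i.e.\ by crossing $\partial_-$). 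Trajectories that do not converge to $C$ leave $\mathcal{N}_{C,j}$ precisely through $\partial_-\mathcal{N}_{C,j}$, so the justification you give for continuity of $\tau$ there fails.

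The correct observation at $(\partial_-\mathcal{N}_{C,j})^\circ$ is the opposite one, and it is also what your proof of part (i) is missing. First, no downward trajectory starting outside $\mathcal{N}_{C,j}$ can make \emph{first} contact at a point of $(\partial_-\mathcal{N}_{C,j})^\circ$: since the flow points out of $\mathcal{N}_{C,j}$ there, a trajectory touching such a point would have been inside $\mathcal{N}_{C,j}$ at slightly earlier times. Second, a point just outside $\mathcal{N}_{C,j}$ near $(\partial_-\mathcal{N}_{C,j})^\circ$ satisfies $f<c$, so its downward trajectory can never reach $\partial_+\mathcal{N}_{C,j}\subseteq f^{-1}(c,\infty)$ and hence never enters $\mathcal{N}_{C,j}$ at all; such points are \emph{not} in $\mathcal{W^+\!N}_{C,j}$. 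Consequently, near $(\partial_-\mathcal{N}_{C,j})^\circ$ the set $\mathcal{W^+\!N}_{C,j}$ coincides with $\mathcal{N}_{C,j}$: this is what shows $(\partial_-\mathcal{N}_{C,j})^\circ$ genuinely belongs to the boundary of $\mathcal{W^+\!N}_{C,j}$ (rather than being absorbed into the interior the way $(\partial_+\mathcal{N}_{C,j})^\circ$ is), and it makes continuity of $\tau$ there trivial, since $\tau\equiv 0$ on that neighbourhood. Two further points need tightening: local closedness should be argued by noting that the only accumulation points of $\mathcal{W^+\!N}_{C,j}$ outside itself lie on downward trajectories converging to critical components $\tilde{C}$ with $f(\tilde{C})>c$ (compare Lemma \ref{lemdec}(iii)), not via Remark \ref{Sardrem}, which concerns the construction of cylindrical neighbourhoods rather than properties of a given strict one; and continuity of $\tau$ along $\partial_\perp\mathcal{W^+\!N}_{C,j}$ requires a separate argument at the corner $\partial_-\mathcal{N}_{C,j}\cap\partial_+\mathcal{N}_{C,j}\subseteq f^{-1}(c)$, where condition (c) of Definition \ref{strictMn} does not by itself guarantee transversality of the flow to $\partial_+\mathcal{N}_{C,j}$.
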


\begin{rem} To extend Definition \ref{defn3.1} and Lemma \ref{lem3.1} to systems of Morse neighbourhoods which are not necessarily strict, we need to take 
 $ \partial _\perp \mathcal{W^+\!N}_{C,j}
$ to be the union of the upwards trajectories under the gradient flow for $f$ of the corners
$$(\partial _- \mathcal{N}_{C,j} \cap \partial _+ \mathcal{N}_{C,j}) \cup (\partial _- \mathcal{N}_{C,j} \cap \partial _\perp \mathcal{N}_{C,j}) \cup (\partial _\perp \mathcal{N}_{C,j} \cap \partial _+ \mathcal{N}_{C,j})$$ of $\mathcal{N}_{C,n}$ (or equivalently  the upwards trajectories under the gradient flow of 
$(\partial _- \mathcal{N}_{C,j} \cap \partial _+ \mathcal{N}_{C,j}) \cup
\partial _\perp \mathcal{N}_{C,j}$). Then the corners of $\mathcal{W^+\!N}_{C,j}$ are (the connected components of)
$\partial _- \mathcal{N}_{C,j} \cap \partial _\perp \mathcal{W^+\!N}_{C,j}
= (\partial _- \mathcal{N}_{C,j} \cap \partial _+ \mathcal{N}_{C,j}) \cup (\partial _- \mathcal{N}_{C,j} \cap \partial _\perp \mathcal{N}_{C,j}) 
$: the lower corners for  $\mathcal{N}_{C,n}$. 
\end{rem}

Now if $x \in M$ then since $M$ is compact the downwards  gradient flow $\{ \psi_t(x): t \geqslant 0\}$ for $f$ from $x$ has a limit point in $\text{Crit}(f)$, and so there is some $C \in \mathcal{D}$ such that $\{ \psi_t(x): t \geqslant 0\}$ meets every Morse neighbourhood $\mathcal{N}_{C,n}$ of $C$. It follows from the definition of a system of Morse neighbourhoods that if $x \in \mathcal{N}_{C,n}$ then $\{ \psi_t(x): t \geqslant 0\}$ leaves $\mathcal{N}_{C,n}$ if and only if it meets the open subset $f^{-1}(-\infty, c)$ of $M$, where $c = f(C)$, and this happens if and only if it has no limit point in $C$. Thus for each $x \in M$ there is a unique $C \in \mathcal{D}$ such that for every Morse neighbourhood $\mathcal{N}_{C,n}$ of $C$ the downwards gradient flow for $f$ from $x$ enters and never leaves $\mathcal{N}_{C,n}$.

\begin{defn} \label{defnwc} 
If $C \in \mathcal{D}$ let
$$ W^+_C = \{ x \in M : \mbox{ for every $n \geqslant 0$ the downwards gradient flow for $f$ from $x$ enters } $$
$$ \mbox{ and never leaves the Morse neighbourhood } \mathcal{N}_{C,n} \}. $$
Similarly let 
$$ W^-_C = \{ x \in M : \mbox{ for every $n \geqslant 0$ the upwards gradient flow for $f$ from $x$ enters } $$
$$ \mbox{
and never leaves the Morse neighbourhood } \mathcal{N}_{C,n} \}. $$
\end{defn} 

\begin{lemma}\label{lemdec}  (i) $ W^+_C = \bigcap_{j\geqslant 0} \mathcal{W^+\!N}_{C,j}$ 
and for any $\mathbb{j} : \mathcal{D} \to \NN$ we have $$M = \bigcup_{C \in \mathcal{D}}   \mathcal{W^+\!N}_{C,
\mathbb{j}(C)}^\circ.$$
(ii) M can be expressed as disjoint unions
$$ M = \bigsqcup_{C \in \mathcal{D}} W^+_C = \bigsqcup_{C \in \mathcal{D}} W^-_C = \bigsqcup_{C, \tilde{C} \in \mathcal{D}} W^+_C \cap W^-_{\tilde{C}} 
 $$
where $W^+_C \cap W^-_{{C}} = C$ and $W^+_C \cap W^-_{\tilde{C}} $ is empty unless $\tilde{C} = C$ or $f(\tilde{C}) > f(C)$, and $W^+_C$ and  $W^-_{{C}}$ are independent of the choice of system of Morse neighbourhoods.\\
(iii) For each $C, \tilde{C} \in \mathcal{D}$ the subsets $W^+_C$, $W^-_C$ and $W^+_C \cap W^-_{\tilde{C}} $ of $M$ are locally closed  with
$$ \overline{W^+_C} \subseteq W^+_C \cup \bigcup_{\tilde{C} \in \mathcal{D}, f(\tilde{C}) > f(C)} W^+_{\tilde{C}} \,\,\,\,\, \mbox{ and }  \,\,\,\,\, \overline{W^-_C} \subseteq W^-_C \cup \bigcup_{\tilde{C} \in \mathcal{D}, f(\tilde{C}) < f(C)} W^-_{\tilde{C}}.
$$
\end{lemma}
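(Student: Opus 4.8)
The plan is to reduce everything to the structure of individual Morse neighbourhoods and the dynamics of the gradient flow, using compactness of $M$ at each step. First I would prove part (i). The inclusion $W^+_C \subseteq \bigcap_{j} \mathcal{W^+\!N}_{C,j}$ is immediate from the definitions, since ``enters and never leaves $\mathcal{N}_{C,j}$'' in particular means ``meets $\mathcal{N}_{C,j}$''. For the reverse inclusion, I would use the dichotomy established in the paragraph preceding Definition \ref{defnwc}: for each $x$ there is a \emph{unique} $C$ such that the downward flow from $x$ enters every $\mathcal{N}_{C,n}$ and never leaves; if $x \in \bigcap_j \mathcal{W^+\!N}_{C,j}$ then the downward flow meets every $\mathcal{N}_{C,j}$, hence has a limit point in $C$ (by property (a), $\bigcap_m \mathcal{N}_{C,m} = C$, together with the monotonicity $\mathcal{N}_{C,n+1}\subseteq(\mathcal{N}_{C,n})^\circ$ and compactness), and by the cited dichotomy this forces the flow to stay in every $\mathcal{N}_{C,n}$, i.e.\ $x \in W^+_C$. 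For the covering statement $M = \bigcup_{C}\mathcal{W^+\!N}^\circ_{C,\mathbb{j}(C)}$, given $x \in M$ pick the unique $C$ as above; the downward flow from $x$ eventually enters and never leaves $\mathcal{N}_{C,\mathbb{j}(C)+1} \subseteq (\mathcal{N}_{C,\mathbb{j}(C)})^\circ$, so after finite time it is in $\mathcal{N}^\circ_{C,\mathbb{j}(C)}$; hence $x \in \mathcal{W^+\!N}^\circ_{C,\mathbb{j}(C)}$. (Here I use Lemma \ref{lem3.1}(i) to identify $\mathcal{W^+\!N}^\circ_{C,j}$ as the interior of $\mathcal{W^+\!N}_{C,j}$.)

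Next, part (ii). The disjointness of $\{W^+_C\}_{C}$ (and of $\{W^-_C\}_{C}$) is exactly the uniqueness in the dichotomy recalled above, so $M = \bigsqcup_C W^+_C$; the same for $W^-$ using the upward flow. Intersecting the two decompositions gives $M = \bigsqcup_{C,\tilde C} W^+_C \cap W^-_{\tilde C}$. For $W^+_C \cap W^-_C$: a point here has its downward flow converging into $C$ and its upward flow converging into $C$; since $f$ is strictly decreasing along non-constant gradient trajectories and $f\equiv c$ on both limit sets, the trajectory through $x$ must be constant, so $x \in \text{Crit}(f)$, and being a limit point of (the constant flow at) points whose flows enter $C$, we get $x \in C$; conversely $C \subseteq W^+_C \cap W^-_C$ trivially. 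For the vanishing of $W^+_C \cap W^-_{\tilde C}$ unless $\tilde C = C$ or $f(\tilde C) > f(C)$: if $x$ lies in this intersection with $\tilde C \neq C$, then the flow line through $x$ runs from a limit point in $\tilde C$ (as $t\to-\infty$) down to a limit point in $C$ (as $t\to+\infty$) and is non-constant, so $f(\tilde C) = \lim_{t\to-\infty} f(\psi_t(x)) > \lim_{t\to+\infty} f(\psi_t(x)) = f(C)$. Independence of the choice of system of Morse neighbourhoods follows from part (i): by Remark \ref{compare}, any two systems of Morse neighbourhoods for the same metric (and indeed for different metrics, but the sets $W^\pm_C$ depend on the metric through the gradient flow, not on the neighbourhoods) are cofinal in each other, so $\bigcap_j \mathcal{W^+\!N}_{C,j}$ is the same set; more directly, the characterisation of $W^+_C$ as ``the set of $x$ whose downward gradient trajectory has all its limit points in $C$'' makes no reference to the neighbourhoods at all.

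Finally, part (iii). Local closedness: $W^+_C = \bigcap_j \mathcal{W^+\!N}_{C,j}$ by (i), and each $\mathcal{W^+\!N}_{C,j}$ is locally closed by Lemma \ref{lem3.1}(i); but a countable intersection of locally closed sets need not be locally closed, so I would instead argue directly, writing $W^+_C = \mathcal{W^+\!N}_{C,0} \setminus \bigcup_{\tilde C \neq C} \mathcal{W^+\!N}^\circ_{\tilde C,0}$ — this holds because by the covering statement every point of $\mathcal{W^+\!N}_{C,0}$ lies in some $\mathcal{W^+\!N}^\circ_{\tilde C,0}$, and by disjointness of the $W^\pm$ decomposition the ``excess'' points of $\mathcal{W^+\!N}_{C,0}$ are precisely those attracted to some other component — expressing $W^+_C$ as a closed set (namely $\mathcal{W^+\!N}_{C,0}$, which is closed since $M$ is compact and it is a closed submanifold with corners) minus an open set, hence locally closed; $W^-_C$ similarly, and $W^+_C \cap W^-_{\tilde C}$ as an intersection of two locally closed sets. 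For the closure estimate $\overline{W^+_C} \subseteq W^+_C \cup \bigcup_{f(\tilde C) > f(C)} W^+_{\tilde C}$: take $x \in \overline{W^+_C}$, say $x \in W^+_{\tilde C}$ (using $M = \bigsqcup W^+$); I must show $\tilde C = C$ or $f(\tilde C) > f(C)$. The downward flow from $x$ enters and never leaves $\mathcal{N}_{\tilde C, n}$ for every $n$; fix $n$, so the flow from $x$ is in $\mathcal{N}^\circ_{\tilde C,n}$ for all large $t$, hence (by continuous dependence of the flow on initial conditions, over a finite time interval, together with openness of $\mathcal{N}^\circ_{\tilde C,n}$) for $x'$ near $x$ the downward flow from $x'$ also meets $\mathcal{N}^\circ_{\tilde C,n}$, i.e.\ $x' \in \mathcal{W^+\!N}_{\tilde C,n}$. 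Taking $x'$ in $W^+_C$ (possible since $x \in \overline{W^+_C}$) and using (i), $W^+_C \subseteq \mathcal{W^+\!N}_{C,n}$; but $x' \in W^+_C$ and $x' \in \mathcal{W^+\!N}_{\tilde C,n}$, and the downward flow from $x'$ eventually enters and never leaves $\mathcal{N}_{C,n}$, so if $\tilde C \neq C$ it must enter $\mathcal{N}_{\tilde C,n}$ strictly before reaching the terminal portion in $\mathcal{N}_{C,n}$, forcing (by strict monotonicity of $f$ along the non-constant trajectory) $f(\tilde C) > f(C)$ after taking $n$ large enough that the neighbourhoods are disjoint and shrink to their components. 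The case of $W^-$ is symmetric with inequalities reversed.

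\medskip
\noindent\emph{Main obstacle.} The subtle point is part (iii), specifically turning ``$x$ is in the closure of $W^+_C$'' into control on \emph{which} component attracts $x$; this requires a careful use of continuous dependence of the gradient flow on initial conditions over finite time intervals (one cannot let $t \to \infty$ uniformly) combined with the nested structure $\mathcal{N}_{C,n+1}\subseteq(\mathcal{N}_{C,n})^\circ$ and disjointness of neighbourhoods of distinct components for $n$ large. The local closedness of $W^+_C$ is also a point one must not gloss over, since it is not a formal consequence of being a countable intersection of locally closed sets; the description $W^+_C = \mathcal{W^+\!N}_{C,0}\setminus\bigcup_{\tilde C\neq C}\mathcal{W^+\!N}^\circ_{\tilde C,0}$ (closed minus open) is the clean way around this.
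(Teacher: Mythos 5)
Your overall strategy is the same as the paper's (whose proof is a two-line reduction to the dichotomy established just before Definition \ref{defnwc}, plus the observation that reaching $\mathcal{N}^\circ_{C,n}$ is an open condition on the initial point), and parts (i), (ii) and the closure estimate in (iii) are argued correctly. However, there is a concrete error in your treatment of local closedness. The identity
$$W^+_C = \mathcal{W^+\!N}_{C,0}\setminus\bigcup_{\tilde C\neq C}\mathcal{W^+\!N}^\circ_{\tilde C,0}$$
is false in general: a point $x\in W^+_C$ may have its downward trajectory pass through the interior of $\mathcal{N}_{\tilde C,0}$ for some $\tilde C$ with $f(\tilde C)>f(C)$ (entering through $\partial_+\mathcal{N}_{\tilde C,0}$ and exiting through $\partial_-\mathcal{N}_{\tilde C,0}$) before converging into $C$ --- on the standard torus, for instance, there are trajectories converging to the minimum that pass through the interior of any given Morse neighbourhood of a saddle point. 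Such an $x$ lies in $\mathcal{W^+\!N}^\circ_{\tilde C,0}$, so your right-hand side omits it; membership of $\mathcal{W^+\!N}^\circ_{\tilde C,0}$ records only that the flow \emph{meets} $\mathcal{N}^\circ_{\tilde C,0}$, not that it is attracted to $\tilde C$, so the ``excess'' points you remove are not precisely those attracted to another component.

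The repair is already contained in your own proof: local closedness should be deduced \emph{from} the closure relation rather than proved separately. Having shown $\overline{W^+_{\tilde C}}\subseteq W^+_{\tilde C}\cup\bigcup_{f(\hat C)>f(\tilde C)}W^+_{\hat C}$ for every $\tilde C$, extend the partial order by $f$-value to a total order on $\mathcal{D}$ and note that $\bigcup_{\tilde C\geqslant C}W^+_{\tilde C}$ is then closed for each $C$; hence $W^+_C=\bigl(\bigcup_{\tilde C\geqslant C}W^+_{\tilde C}\bigr)\setminus\bigl(\bigcup_{\tilde C> C}W^+_{\tilde C}\bigr)$ is a closed set intersected with an open set, i.e.\ locally closed (this is exactly how the paper uses the lemma immediately afterwards, when it declares $U_C=M\setminus\bigcup_{\tilde C>C}W^+_{\tilde C}$ to be open). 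One further small point: in your closure estimate, strict monotonicity of $f$ along the trajectory of $x'$ only yields $f(\tilde C)\geqslant f(C)$ directly; to rule out $f(\tilde C)=f(C)$ with $\tilde C\neq C$ you also need the observation that a trajectory entering $\mathcal{N}^\circ_{\tilde C,n}$ either never leaves (and so converges into $\tilde C$) or exits into $f^{-1}(-\infty,f(\tilde C))$, after which it cannot converge to a component at level $f(\tilde C)$.
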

\begin{proof}
(i) and (ii) follow directly from the argument above. (iii) also follows since if $\psi_{t_0}(x) \in \mathcal{N}_{C,n+1} \subseteq \mathcal{N}_{C,n}^\circ$ for some $t_0 \in \RR$ then $\psi_{t_0}(y) \in \mathcal{N}_{C,n}^{\circ}  \subseteq \mathcal{N}_{C,n}$ for all $y$ in a neighbourhood of $x$.
\end{proof}

\begin{defn} \label{Ms}
We will call $\{W_C^+:C \in \mathcal{D}\}$ and $\{W_C^-:C \in \mathcal{D}\}$ {\it Morse stratifications} of $M$. 

We will also call 
$\{ \mathcal{W^+\!N}^\circ_{C,
\mathbb{j}(C)} :C \in \mathcal{D}\}$ and  $\{ 
 \mathcal{W^-\!N}^\circ_{{C},
{\mathbb{j}}({C})}:C \in \mathcal{D}\}$ and 
$$\{ \mathcal{W^+\!N}^\circ_{C,
\mathbb{j}(C)} %:C \in \mathcal{D}\}$ and  $\{ 
\cap \mathcal{W^-\!N}^\circ_{\tilde{C},
\tilde{\mathbb{j}}(\tilde{C})}:C, \tilde{C} \in \mathcal{D}\}$$  {\it Morse covers} of $M$ when 
$\mathbb{j} : \mathcal{D} \to \NN$ 
and $\tilde{\mathbb{j}} : \mathcal{D} \to \NN$ with $\mathbb{j}(C) >\!> 1$ and  $\tilde{\mathbb{j}}(C) >\!> 1$ for all $C \in \mathcal{D}$.
\end{defn}

Now define a strict partial order on $\mathcal{D}$ by $C<\tilde{C}$ if $f(C) < f(\tilde{C})$, and extend it to a total order $>$ on $\mathcal{D}$. For each $C \in \mathcal{D} $ the subset 
$$U_C = M \setminus \bigcup_{\tilde{C}>C} W^+_{\tilde{C}}$$
is open in $M$ and contains $W^+_C$ as a closed subset. There is then a long exact sequence of homology
\begin{equation} \label{les}  \cdots \to H_{i +1}(U_C, U_C \setminus W^+_C ;\FF) \to
 H_i(U_C \setminus W^+_C;\FF)  \to  H_i(U_C; \FF) \to H_{i  }(U_C, U_C \setminus W^+_C;\FF) \to  \cdots .
\end{equation}
Moreover if $n$ is any natural number then $\mathcal{W^+\!N}_{C,n}$ is a neighbourhood of $W^+_C$ in $U_C$ and is a closed submanifold of $U_C$ with corners. Thus by excision
$$ H_{*}(U_C, U_C \setminus W^+_C;\FF) \cong H_{*}(\mathcal{W^+\!N}_{C,n}, \mathcal{W\!^+N}_{C,n} \setminus W^+_C;\FF) .$$
The downwards gradient flow for $f$ induces a retraction from $\mathcal{W^+\!N}_{C,n}$ to $\mathcal{N}_{C,n}$ and also a retraction from $\mathcal{W^+\!N}_{C,n} \setminus W^+_C$ to
$\partial_-\mathcal{N}_{C,n}$. These induce an isomorphism 
$$ H_{*}(\mathcal{W^+\!N}_{C,n}, \mathcal{W\!^+N}_{C,n} \setminus W^+_C;\FF) \cong
H_{*}(\mathcal{N}_{C,n}, \partial_-\mathcal{N}_{C,n};\FF) .$$
The Morse inequalities (Theorem \ref{mainthm}) now follow from the long exact sequence (\ref{les}), as in the first proof given in $\S$\ref{sec:neighbourhoods}. 

\begin{rem} \label{decomp}
Recall from Remark \ref{Sardrem} that we can choose a system of cylindrical Morse neighbourhoods  $\{ \mathcal{N}^{\underline{\epsilon}}_{C,n}: C \in \mathcal{D}, n\geqslant 0 \}$ of the critical sets $C \in \mathcal{D}$ such that if $c = f(C)$ then
$$ \mathcal{N}^{\underline{\epsilon}}_{C,n} \cap f^{-1}(c) = \{ x \in f^{-1}(c)\cap \tilde{U}_C : |\!| \grad(f)  (x) |\!|^2 \leqslant \epsilon(C)_{n} \} $$
where $\tilde{U}_C$ is a fixed neighbourhood of $C$ in $M$ and $\underline{\epsilon}$ is a function from $\mathcal{D}$ to the set of strictly decreasing sequences $(\epsilon_1, \epsilon_2, \ldots )$ of strictly positive real numbers converging to 0. This is possible by Sard's theorem  (\cite{Sard} Thm II.3.1) applied to  $|\!| \grad(f)   |\!|^2$ on  $f^{-1}(c) \setminus \text{Crit}(f)$, which allows us to choose such sequences $\underline{\epsilon}(C)$ consisting of regular values of the smooth function $|\!| \grad(f)   |\!|^2|_{f^{-1}(c) \setminus \text{Crit}(f)}$. We then define $ \mathcal{N}^{\underline{\epsilon}}_{C,n}$ to consist of those $x \in f^{-1}[c-\delta,c+\delta]$ (for $c = f(C)$ and $\delta>0$ sufficiently small) such that the gradient flow for $f$ from $x$ meets $ \mathcal{N}^{\underline{\epsilon}}_{C,n} \cap f^{-1}(c)$. As in Remark \ref{modify} we can further choose a system $\{\mathcal{N}_{C,n} : C \in \mathcal{D} ,  n \geqslant 0 \} $ of Morse neighbourhoods such that 
$$ \mathcal{N}_{C,n} \subseteq  \mathcal{N}^{\underline{\epsilon}}_{C,n} \,\,\, \mbox{ and } \,\,\,  \mathcal{N}_{C,n} \cap f^{-1}(c) =  \mathcal{N}^{\underline{\epsilon}}_{C,n} \cap f^{-1}(c)$$
for all $c \in \text{Critval}(f)$, all $C \in \mathcal{D}_c$ and all $n \geqslant 0$.

The gradient flow defines a diffeomorphism $g_{c,c'}$ from $f^{-1}(c)  \setminus \text{Crit}(f) $ to an open subset of $f^{-1}(c')$ for any $c'$ sufficiently close to $c$; if $c' \geqslant c$ (respectively $c' \leqslant c$) then this open subset is 
$$ f^{-1}(c') \setminus \bigcup_{C \in \mathcal{D}_c} W_C^+$$
(respectively $ f^{-1}(c') \setminus \bigcup_{C \in \mathcal{D}_c} W_C^-$). Composing $|\!| \grad(f)   |\!|^2$ on  $f^{-1}(c) \setminus \text{Crit}(f) $  with the diffeomorphism $g_{c,c'}$ defines a smooth function $|\!| \grad(f)   |\!|^2_{c,c'}$ on this open subset of $f^{-1}(c')$ (which extends to a continuous function on $f^{-1}(c')$ by assigning the value 0 on $ \bigcup_{C \in \mathcal{D}_c} W_C^\pm$). Then 
$$ \mathcal{N}^{\underline{\epsilon}}_{C,n} = \{x \in f^{-1}[c-\delta,c+\delta]: |\!| \grad(f)   |\!|^2_{c,f(x)} \leqslant \epsilon(C)_n \}.$$
Let $\text{Critval}(f) = \{c_1,\ldots,c_P\}$ where $c_1 < c_2 < \cdots < c_P$, 
and  for any subset $I$ of $\{1, \ldots ,P \}$ let
$$ U_{I,c'} =  f^{-1}(c') \setminus ( \bigcup_{C \in \mathcal{D}: c' \geqslant f(C) \geqslant \min\{c_i: i \in I\}} W^+_C \,\,\,  \cup 
\bigcup_{C \in \mathcal{D}: \max\{c_i: i \in I\} \geqslant f(C) \geqslant c'} W^-_C).
$$
Then for any $c' \in f(M)$ and $k \in \{1, \ldots , P\}$ there is a smooth function 
$$ |\!| \grad(f)   |\!|^2_{c_k,c'}: U_{\{k\},c'} \,\,\,  \to \, \RR
$$
obtained by composing $|\!| \grad(f)   |\!|^2|_{f^{-1}(c_k)}$ with the gradient flow from $U_{\{k\},c'}$ to $f^{-1}(c_k)$, which is a diffeomorphism onto an open subset of $f^{-1}(c_k)$. 
Similarly for any subset $I$ of $\{1, \ldots ,P \}$ such that the open subset $U_{I,c'}$ of $f^{-1}(c')$ is nonempty, there is a smooth function
$$F_{I,c'} = ( |\!| \grad(f)   |\!|^2_{c_i,c'})_{i \in I} : U_{I,c'} \to \RR^{|I|},$$
and by Sard's theorem the image in $\RR^{|I|}$ under $F_{I,c'}$ of its critical set $\text{Crit}(F_{I,c'})$ has Lebesgue measure 0. 
When $c'$ and $c'\!'$ lie in the same connected component of $\RR \setminus \text{Critval}(f)$ then $F_{I,c'}$ is the composition of $F_{I,c'\!'}$ with a diffeomorphism and so
$F_{I,c'}(\text{Crit}(F_{I,c'}))$ equals $F_{I,c'\!'}(\text{Crit}(F_{I,c'\!'}))$. %Thus up to conjugacy by a diffeomorphism there are only finitely many such functions $F_{I,c'}$ to consider, and
\begin{figure}[t]
\includegraphics[width = 0.4\linewidth]{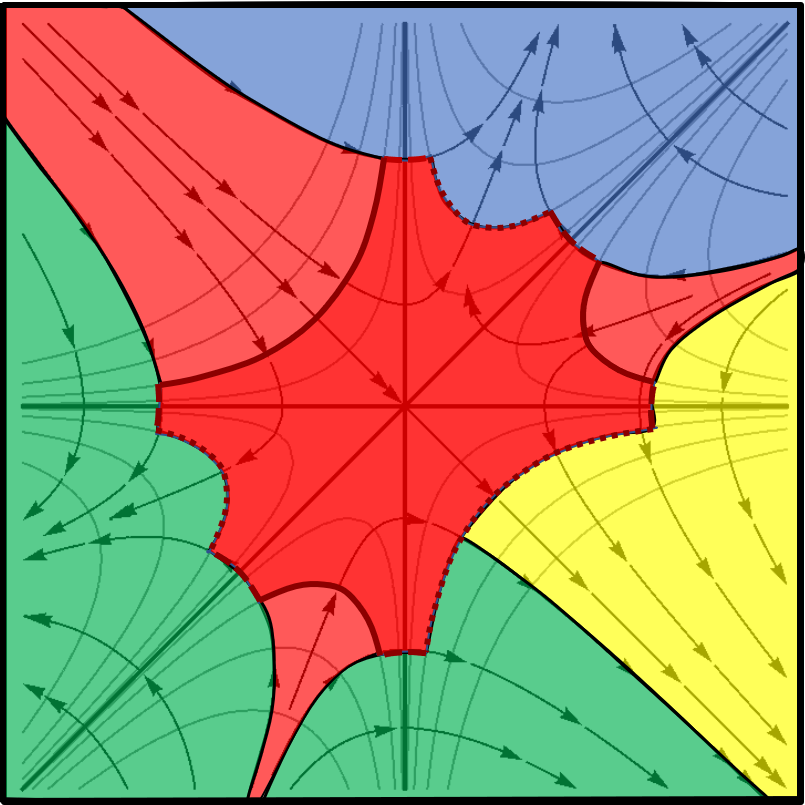}
\centering
\caption{A Morse decomposition for  $f(x,y) = x y\, (x-y)$ near 0.}
\label{fig:cover}
\end{figure}
Since the union of finitely many subsets of Lebesgue measure 0 has Lebesgue measure 0, we can choose a function
 $\underline{\epsilon}$ from $\mathcal{D}$ to the set of strictly decreasing sequences $(\epsilon_1, \epsilon_2, \ldots )$ of strictly positive real numbers converging to 0
%a system of cylindrical Morse neighbourhoods  $\{ \mathcal{N}^{\underline{\epsilon}}_{C,n}: C \in \mathcal{D}, n\geqslant 0 \}$ of the critical sets $C \in \mathcal{D}$, and then a system $\{\mathcal{N}_{C,n} : C \in \mathcal{D} ,  n \geqslant 0 \} $ of strict Morse neighbourhoods,as above,
 such that whenever $f(C_i) = c_i$ for $i \in I$ then if $(\epsilon(C_i)_{n_i})_{i \in I}$ lies in the image of $F_{I,c'}$ it is a regular value of $F_{I,c'}$. We can then choose  a system $\{\mathcal{N}_{C,n} : C \in \mathcal{D} ,  n \geqslant 0 \} $ of Morse neighbourhoods, as above, such that 
  near any $f^{-1}(c')$ with $c' \not\in \text{Critval}(f)$ the subsets $\mathcal{W^+\!N}_{C_i,n_i}$ are submanifolds (of codimension 0) with boundary which are invariant under the gradient flow and whose boundaries intersect transversely. 
  
  Suppose we are  given any  $\mathbb{j} : \mathcal{D} \to \NN$ with $\mathbb{j}(C) >\!> 1$ for all $C \in \mathcal{D}$. % and  $\mathbb{j}(C) >\!> \mathbb{j}(\tilde{C})$ when $f(\tilde{C}) > f(C)$. 
  Then by Lemma \ref{lemdec} we get a decomposition of $M$ as a union of closed submanifolds $$V^+_C = \mathcal{W^+\!N}_{C,
\mathbb{j}(C)} \setminus \bigcup_{\tilde{C} \in \mathcal{D}, f(\tilde{C}) > f(C)} 
 \mathcal{W^+\!N}_{\tilde{C},
\mathbb{j}(\tilde{C})}^\circ $$ with corners, all having the same dimension as $M$ and meeting along submanifolds (with corners) of their common boundaries, which can be regarded as approximations to the Morse stratifications. These submanifolds are invariant under the gradient flow of $f$ except where they meet the boundaries of the Morse neighbourhoods $\mathcal{N}_{C,n} $, and if we choose the Morse neighbourhoods to be strict or cylindrical then this decomposition restricts to a decomposition of any level set of $f$ into submanifolds with corners, meeting along submanifolds of their common boundaries. 
Similarly %if we choose  $\tilde{\mathbb{j}} : \mathcal{D} \to \NN$ with $\tilde{\mathbb{j}} (C) >\!> 1$ for all $C \in \mathcal{D}$ and  $\tilde{\mathbb{j}} (C) >\!> \tilde{\mathbb{j}} (\tilde{C})$ when $f(\tilde{C}) < f(C)$
we get a decomposition of $M$ as a union of closed submanifolds $V^-_C = \mathcal{W^-\!N}_{C,
{\mathbb{j}} (C)} \setminus \bigcup_{\tilde{C} \in \mathcal{D}, f(\tilde{C}) < f(C)} 
 \mathcal{W^-\!N}_{\tilde{C},
{\mathbb{j}} (\tilde{C})}^\circ $ with corners, meeting along submanifolds of their common boundaries. The decompositions $M = \bigcup_{C \in \mathcal{D}} V^+_C$ and $M = \bigcup_{C \in \mathcal{D}} V^+_C$ are compatible in that the intersections $V_{C,\tilde{C}} = V^+_C \cap V^-_{\tilde{C}}$ are closed submanifolds of $M$ with corners, all having the same dimension as $M$ and meeting along submanifolds of their common boundaries, and they give a decomposition
$M = \bigcup_{C \in \mathcal{D}} \mathcal{N}_{C,{\mathbb{j}}(C)} \,\,\,\, \cup \,\,\, \bigcup_{C,\tilde{C} \in \mathcal{D}, f(\tilde{C}) < f(C)}  V_{C,\tilde{C}}.$
%where $\hat{\mathbb{j}}(C)$ is the maximum of ${\mathbb{j}(C)}$ and $\tilde{\mathbb{j}}(C)$.
Each submanifold with corners $V_C^+$ retracts via the gradient flow onto the corresponding Morse neighbourhood $\mathcal{N}_{C,\mathbb{j}(C)}$, inducing isomorphisms of relative homology groups
$$H_k(V^+_{C}, \partial_- V_{C}^+;\FF) \cong H_k(\mathcal{N}_{C,\infty}, \partial_- \mathcal{N}_{C,\infty};\FF)$$
where $\partial_- V^+_{C} = \partial_- \mathcal{N}_{C,\infty}$ is the outflowing boundary of $V^+_C$ for  $C\in \mathcal{D}$; corresponding statements hold for $V_C^-$.
We will call such decompositions $\mathcal{V} = \{V_{C}^+: C \in \mathcal{D}\}$ of $M$ {\it Morse decompositions} for $f:M \to \RR$; a local picture of a Morse decomposition near one connected component $C$ of the critical locus is given in Figure \ref{fig:cover}. 
\end{rem}

\section{Double complexes and spectral sequences} \label{sec:spectral}

In this section, given a smooth function $f:M \to \RR$ on a compact Riemannian manifold $M$  whose critical locus $\text{Crit}(f)$ has finitely many connected components, 
we will obtain further proofs of the Morse inequalities (Theorem \ref{mainthm}) from spectral sequences, one by 
using a Morse cover of $M$, as defined in $\S$3, to obtain a double complex with a filtration and associated spectral sequence abutting to the homology of $M$. 

Recall that a (homological) spectral sequence $E$ of bigraded vector spaces over $\FF$ starting at $r_0 \in \NN$ is given by three sequences:\\
(i) for all integers $r\geqslant r_0$, a bigraded  vector space $E_r = \bigoplus_{p,q \in \ZZ} E^r_{p,q}$, called the $r$th  page of the spectral sequence,\\
(ii)  linear maps $d_r : E_r \to E_r$ of bidegree $(-r,r - 1)$ satisfying $d_r \circ d_r = 0$, called boundary maps or differentials, and \\
(iii) identifications of $E_{r+1}$ with the homology of $E_r$ with respect to $d_r$.\\

Recall also that a 
 double complex (respectively a first quadrant double complex) $N$ over $\FF$ is given by a bigraded  vector space $N = \bigoplus_{p,q \in \ZZ} N_{p,q}$ (respectively $N = \bigoplus_{p,q \in \NN} N_{p,q}$) over $\FF$
with two differentials $\partial':N \to N$ of bidegree $(-1,0)$ and $\partial'\!':N \to N$ of bidegree $(0,-1)$ satisfying $(\partial')^2 = 0 = (\partial'\!')^2$ and $\partial' \partial'\!' + \partial'\!' \partial' = 0$  (cf. \cite{Boardman, Hurtubise,
Wall}). Then $\partial = \partial' + \partial'\!'$ satisfies $\partial^2 = 0$, and the total complex $\text{Tot}(N)$ of $N$ is given by
$$\text{Tot}(N)_k = \bigoplus_{p+q = k} N_{p,q}$$
with differential $\partial$. A first quadrant double complex is the special case when $\partial_j = 0$ for $j \geqslant 2$ of a
first quadrant multicomplex $N$ over $\FF$ which is given by a bigraded  vector space  $N = \bigoplus_{p,q \in \NN} N_{p,q}$
with linear maps $\partial_i: N \to N$ of bidegree $(-i,i-1)$ for $i \geqslant 0$ satisfying $$ \sum_{i+j = n} \partial_i \partial_j = 0$$
for all $n$. Then $\partial = \sum_{j \geqslant 0} \partial_j$ satisfies $\partial^2 = 0$, and the total complex $\text{Tot}(N)$ of $N$ is given by
$$\text{Tot}(N)_k = \bigoplus_{p+q = k} N_{p,q}$$
with differential $\partial$. 

Let  $C= \bigoplus_{k \in \NN} C_k$ be a complex with differential $d:C \to C$ of degree $-1$ and a filtration $F$ of $C$ by subcomplexes
$$0 = F_0  C \subseteq F_1 C \subseteq \cdots \subseteq F_L C = C.$$
Then each $C_k$ has a filtration  $0 = F_0  C_k \subseteq F_1 C_k \subseteq \cdots \subseteq F_L C_k = C_k$ where $F_\ell C_k = F_\ell C \cap C_k$ for $0 \leqslant \ell \leqslant L$, and there is also an induced filtration on the homology $H(C) = \bigoplus_{k \in \NN} H_k(C)$ of $C$, where $F_\ell H_k(C)$ is the image of the $k$th homology of $F_\ell C$ under the map  induced by the inclusion of $F_\ell C$ in $C$. Recall (from for example \cite{Maclane} Ch 16, Thm 5.4) that $F$ determines a spectral sequence $E_r = \bigoplus_{p,q \in \NN} E^r_{p,q}$ with isomorphisms
$$E^1_{p,q} \cong H_{p+q}(F_p C/F_{p-1}C)$$
which abuts to the homology of $C$ in the sense that there is some $r_\infty \geqslant r_0$ such that the differentials $d_r$ are all zero when $r \geqslant r_\infty$, giving natural isomorphisms $E_{r_\infty} \cong E_{r_\infty +1} \cong \cdots \cong E_\infty$ with
$$ E^\infty_{p,q} \cong F_p(H_{p+q}C)/F_{p-1}(H_{p+q}C).$$

\begin{defn} \label{defnbees}
Let $\text{Critval}(f) = \{c_1,\ldots,c_\rho\}$ where $c_1 < c_2 < \cdots < c_\rho$. Let $b_k = (c_k + c_{k+1})/2$  for $1 \leq k < \rho$ with $b_0=c_1 -1$ and $b_\rho = c_\rho +1$, and let
$$A_k = f^{-1}(-\infty,b_k]$$
 for $0 \leq k \leq \rho$, so that $A_+0 = \emptyset$ and $A_\rho = M$.  
\end{defn}

%\begin{rem} \label{usefulrem}
The first proof of the Morse inequalities for $f$ (Theorem \ref{mainthm}) used the existence of an  isomorphism
$$ H_i(A_k,A_{k-1};\FF)  \cong \bigoplus_{C \in \mathcal{D}_{c_k}} H_i(N_{C,\infty},\partial_- N_{C,\infty};\FF)
$$
induced by the gradient flow of $f$ and excision. Using the filtration of the chain complex $C_*(M)$  of singular simplices on $M$ by the sub-complexes $C_*(A_k)$, we see that there is a spectral sequence $E^r_{p,q}$ abutting to $H_*(M;\FF)$ with $E^1_{p,q} = H_{p+q}(A_p, A_{p-1};\FF)$ and boundary map  taking $[\xi] \in H_{p+q}(A_p, A_{p-1};\FF)$, where $\xi$ is a chain  in $A_p$ with boundary in $A_{p-1}$, to $[\partial(\xi)] \in  H_{p+q-1}(A_{p-1}, A_{p-2};\FF)$.
The existence of this spectral sequence implies the Morse inequalities. 
We will see that there is a related spectral sequence arising from a double complex which also abuts to $H_*(M;\FF)$ and is easier to describe in terms of the spaces $H_i(N_{C,\infty},\partial_- N_{C,\infty};\FF)$.
% \end{rem}

Given a first quadrant double complex $N = \bigoplus_{p,q \in \NN} N_{p,q}$ as above, its total complex $\text{Tot}(N)$ has two filtrations $F^{(1)}$ and $F^{(2)}$ defined by 
\begin{equation} \label{ffilt} F^{(1)}_p \text{Tot}(N)_k = \bigoplus_{h \leqslant p} N_{h,k-h}  \,\,\, \mbox{ and } \,\,\, F^{(2)}_p \text{Tot}(N)_k = \bigoplus_{h \leqslant p} N_{k-h,h}.\end{equation}
These give us the \lq first and second spectral sequences' $E(1)_r = \bigoplus_{p,q \in \NN} E(1)^r_{p,q}$ and $E(2)_r = \bigoplus_{p,q \in \NN} E(2)^r_{p,q}$ associated to the double complex, which both abut to the homology of the total complex $\text{Tot}(N)$. 
The 0th page of the first spectral sequence is given by 
$$E(1)^0_{p,q} =   \bigoplus_{h \leqslant p} N_{h,p+q-h}/\bigoplus_{h < p} N_{h,p+q-h} = N_{p,q}$$
where the differential is the map on the quotient induced by  $\partial = \partial' + \partial'\!'$ or equivalently by  $\partial'\!'$. Thus 
$$E(1)^1_{p,q} =  H'\!'_q( N_{p,*})$$
where $H'\!'$ denotes homology with respect to the differential $\partial'\!'$. Then the map induced by $\partial'\!'$ on $E(1)^1$ is zero, so the map induced by $\partial = \partial' + \partial'\!'$ is the same as that induced by  $\partial'$, and
the second page of the first spectral sequence is given by 
$$E(1)^2_{p,q} = H'_p H'\!'_q(N)$$
where $H'\!'(N)$ is the homology of $N$ with respect to the differential $\partial'\!'$ and $H' H'\!'(N)$ is the homology of $H'\!'(N)$ with respect to the differential induced by $\partial'$. The first few pages of the second spectral sequence have a similar description. 

We can also consider a situation when the first quadrant double complex $N = \bigoplus_{p,q 
} N_{p,q}$ 
 has a filtration $F$ by double complexes
$$0 = F_0  N \subseteq F_1 N \subseteq \cdots \subseteq F_L N = N.$$
Then the  total complex $\text{Tot}(N)$ has an induced filtration 
 given on $ \text{Tot}(N)_k = \bigoplus_{p+q = k} N_{p,q}$ by
$$ F_\ell \text{Tot}(N)_k = \text{Tot}(F_\ell N)_k =
 \bigoplus_{p+q = k} F_\ell N_{p,q}.$$
This filtration determines a spectral sequence $E_r = \bigoplus_{\ell, k} E^r_{\ell,k}$ abutting to the homology of $\text{Tot}(N)$ and satisfying
$$ 
E^1_{\ell,k} =  H_{\ell + k}  ( \bigoplus_{p+q=k} F_\ell N_{p,q} \,  / F_{\ell -1} N_{p,q})$$
where the homology is taken with respect to the differential induced by $\partial = \partial' + \partial'\!'$ on the quotient $F_\ell \text{Tot}(N)/F_{\ell -1} \text{Tot}(N)$.

We will apply this construction to the Mayer--Vietoris double complex associated to a Morse cover  $\mathcal{U} = \{ U_C: C \in \mathcal{D} \}$ of $M$ where $U_C = \mathcal{W}^+_C(\mathcal{N}^\circ_{C,\mathbb{j}(C)})$ as at Definition \ref{Ms}, or to a Morse decomposition as described at Remark \ref{decomp}.
Let $C_*(M)$ be the chain complex of singular simplices on $M$.
The Mayer--Vietoris double complex $N$ of a cover $\mathcal{U} = \{ U_C: C \in \mathcal{D} \}$ of $M$ has total complex $\text{Tot}(N)$  whose homology is isomorphic to  the homology of $M$,  and $N$  is given by
$$N_{p,q} = \bigoplus_{\sigma \in \mathrm{Nerve}_p(\mathcal{U})} C_q(\bigcap_{C \in \sigma}U_C)$$
where $\mathrm{Nerve}_p(\mathcal{U}) = \{ \sigma \subseteq \mathcal{D}: |\sigma| = p+1 \mbox{ and } \bigcap_{C \in \sigma}U_C \neq \emptyset \}$ defines the nerve of the cover $\mathcal{U}$
(see \cite{Brown} VII $\S$4, \cite{Stafa}). If $\sigma = \{C_0, \ldots,C_p\}$ where $C_0 < \cdots < C_p$ for some fixed total order on $\mathcal{D}$, the inclusions of
$\bigcap_{C \in \sigma}U_C$ in $\bigcap_{C \in \sigma \setminus \{C_i\}}U_C$ induce chain maps $\partial_{(i)}$ from $N_{p,q}$  to $N_{p-1,q}$ and we define the differential $\partial'$ on $N$ by
$\sum_{i=0}^p (-1)^i \partial_{(i)}$. The differential $\partial'\!'$ is given by the usual boundary map $N_{p,q} \to N_{p,q-1}$. 

\begin{rem} \label{rem:nerve}
Note that if we choose the Morse cover $\mathcal{U} = \{ U_C: C \in \mathcal{D} \}$ 
where $U_C = \mathcal{W}^+_C(\mathcal{N}^\circ_{C,\mathbb{j}(C)})$ as at Definition \ref{Ms}, using a construction of Morse neighbourhoods chosen as in
 Remark \ref{decomp} for $\mathbb{j}(\tilde{C}) >\!> \mathbb{j}(C) >\!> 1$ when $f(\tilde{C}) > f(C)$, then the nerve $\mathrm{Nerve}_p(\mathcal{U}) $ is controlled by a combinatorial invariant of the smooth function $f$ on the compact Riemannian manifold $M$. Indeed by Lemma \ref{nervequiver} below, if
$\bigcap_{C \in \sigma}U_C \neq \emptyset$ then
%$\bigcap_{C \in \sigma}\overline{W^+_C}$ is not empty, and then 
 $\sigma = \{C_0, \ldots,C_p\}$ where
$f(C_0) < \cdots < f(C_p)$ and there exist  \lq broken flow-lines' from $C_i$ to $C_{i-1}$; more precisely there are connected components
$$C_0 = C_{0,0}, C_{0,1},\ldots C_{0,m_1} = C_1 = C_{1,0}, \ldots C_{p-1,m_p} = C_p$$
of $\mathrm{Crit}(f)$ such that if $0\leqslant i < p$ and $0 < j \leqslant m_{i+1}$ then $f(C_{i,j-1}) < f(C_{i,j})$ and 
 the intersection
$W^-_{C_{i,j}} \cap W^+_{C_{i,j-1}} $  given by $$\{ x \in M : \{ \psi_{t}(x): t \leqslant 0\} \mbox{  has a limit point in $C_{i,j}$ and } \{ \psi_{t}(x): t \geqslant 0\} \mbox{  has a limit point in $C_{i,j-1}$} \}$$
 is nonempty and closed in $f^{-1}(f(C_{i,j-1}),f(C_{i,j}))$. Here as before $\psi_t(x)$  describes the downwards gradient flow for $f$ from $x$ at time $t$. %In fact we can assume that $W^-_{C_{i,j}} \cap W^+_{C_{i,j-1}} $ has a connected component $A_{i,j}$ such that if $W^-_{C_{i,j}}(A_{i,j})$ and $W^+_{C_{i,j-1}}(A_{i,j})$  are the connected components of $W^-_{C_{i,j}}$ and $W^+_{C_{i,j-1}} $ containing $A_{i,j}$ then $W^-_{C_{i,j}}(A_{i,j}) \cap W^+_{C_{i,j-1}}(A_{i,j})$  is closed in $f^{-1}(f(C_{i,j-1}),f(C_{i,j}))$. 
This  can be expressed succinctly in terms of the quiver $\Gamma$ defined below:
whenever
$\sigma = \{C_0, \ldots,C_p\} \in \mathrm{Nerve}_p(\mathcal{U})$
there is a (directed) path in this quiver $\Gamma$ through all the vertices $v_{C_0},\ldots,v_{C_p}$.
 \end{rem}

\begin{defn} \label{6.1below} \label{defnquiver}
Let $\Gamma$ be the quiver with vertices $\{ v_C: C \in \mathcal{D}\}$ corresponding to the connected components $C$ of $\mathrm{Crit}(f)$, and with an arrow from $v_C$ to $v_{\tilde{C}}$ for each 
 connected component $A$ of 
$$W^-_C \cap W^+_{\tilde{C}} = \{ x \in M : \{ \psi_{t}(x): t\! \leqslant \!0\} \mbox{  has a limit point in $C$, $\{ \psi_{t}(x)\!: \!t \geqslant 0\}$ has a limit point in $\tilde{C}$} \}$$
whenever $W^-_C \cap W^+_{\tilde{C}}$ % such that if $W^-_{C}(A)$ and $W^+_{\tilde{C}}(A)$  are the connected components of $W^-_{C}$ and $W^+_{\tilde{C}} $ containing $A$ then $W^-_{C}(A) \cap W^+_{\tilde{C}}(A)$ 
 is closed in $f^{-1}(f(\tilde{C}),f(C))$. 
\end{defn}

\begin{lemma}
If $\sigma = \{C_0, \ldots,C_p\}$ is a $p$-simplex in the nerve $\mathrm{Nerve}_p(\mathcal{U})$ of a Morse cover $\mathcal{U}$ chosen as in Remark \ref{rem:nerve}, then
there is a path in the quiver $\Gamma$ through all the vertices $v_{C_0},\ldots,v_{C_p}$.
\end{lemma}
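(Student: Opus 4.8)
The plan is to convert the combinatorial hypothesis into the existence of a broken gradient trajectory threading $C_0,\dots,C_p$, and then to convert such a broken trajectory into a directed path in $\Gamma$ by an induction that repeatedly breaks connecting trajectories at intermediate critical components.

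First I would unwind the hypothesis. That $\sigma=\{C_0,\dots,C_p\}$ lies in $\mathrm{Nerve}_p(\mathcal U)$ means $\bigcap_{i=0}^{p}U_{C_i}\neq\emptyset$, so there is a point $x$ whose downwards gradient trajectory $\gamma(t)=\psi_t(x)$ meets each $\mathcal N^{\circ}_{C_i,\mathbb{j}(C_i)}$, say at time $t_i$; as the $\mathcal N_{C_i,\mathbb{j}(C_i)}$ are pairwise disjoint the $t_i$ are distinct, and since $f$ is strictly decreasing along $\gamma$ off $\mathrm{Crit}(f)$ we may relabel so that $t_p<t_{p-1}<\dots<t_0$ and hence $f(C_p)\geqslant\dots\geqslant f(C_0)$. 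This is in fact strict: for the Morse neighbourhoods of Remark \ref{decomp} the outflow boundaries of all neighbourhoods of components at a common critical level $c$ lie in the single sublevel $f^{-1}(c-\delta)$, so once $\gamma$ has left $\mathcal N_{C_{i+1},\mathbb{j}(C_{i+1})}$ it lies in $f^{-1}(-\infty,c-\delta)$ and can never again come close to a component at level $c$; thus $f(C_0)<\dots<f(C_p)$. Now, directed paths in the $\RR$-quiver $\Gamma$ concatenate --- a concatenation of directed paths whose vertices have strictly decreasing $f$-values is again a directed path with no repeated vertices --- so it suffices to produce, for each $0\leqslant i<p$, a directed path from $v_{C_{i+1}}$ to $v_{C_i}$. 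Fixing $i$ and writing $y=\gamma(t_{i+1})\in\mathcal N^{\circ}_{C_{i+1},\mathbb{j}(C_{i+1})}$, the downwards trajectory of $y$ subsequently meets $\mathcal N^{\circ}_{C_i,\mathbb{j}(C_i)}$; because $\mathbb{j}(C_{i+1})\gg\mathbb{j}(C_i)\gg 1$ the point $y$ is very close to $C_{i+1}$ and its trajectory passes very close to $C_i$, and the standard compactness of spaces of gradient trajectories on the compact manifold $M$ yields, in the limit of small Morse neighbourhoods, a \emph{broken gradient trajectory} from $C_{i+1}$ to $C_i$: a chain $C_{i+1}=B_0,B_1,\dots,B_q=C_i$ with $f(B_0)>\dots>f(B_q)$ and $W^-_{B_{l-1}}\cap W^+_{B_l}\neq\emptyset$ for every $l$. (Equivalently: the maximal invariant set of a suitable isolating neighbourhood assembled from $\mathcal{W^+\!N}_{C_{i+1},\mathbb{j}(C_{i+1})}$, the region between, and $\mathcal{W^-\!N}_{C_i,\mathbb{j}(C_i)}$ is connected, with $\{C_{i+1},\dots,C_i\}$ as a Morse decomposition, which forces connecting orbits.)

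It then remains to prove the purely structural statement: if $E,D\in\mathcal D$ with $f(E)>f(D)$ and there is a broken gradient trajectory from $E$ to $D$, then there is a directed path in $\Gamma$ from $v_E$ to $v_D$. I would induct on the number $N$ of critical values strictly between $f(D)$ and $f(E)$. If $N=0$ the broken trajectory cannot break, so $W^-_E\cap W^+_D\neq\emptyset$; by the closure relations of Lemma \ref{lemdec}(iii) no $W^-_{\tilde C}$ with $f(\tilde C)<f(E)$ and no $W^+_{\tilde C}$ with $f(\tilde C)>f(D)$ meets $f^{-1}(f(D),f(E))$, so $W^-_E\cap W^+_D$ is closed in $f^{-1}(f(D),f(E))$ and, by Definition \ref{defnquiver}, each of its connected components is an arrow $v_E\to v_D$. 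For $N\geqslant 1$: if the broken trajectory genuinely breaks at some component, each sub-piece has strictly fewer critical values in its $f$-range, and induction together with concatenation finishes; otherwise $W^-_E\cap W^+_D\neq\emptyset$, and either it is closed in $f^{-1}(f(D),f(E))$ (an arrow again), or any point $z$ of its closure in $f^{-1}(f(D),f(E))$ lying outside it lies, by Lemma \ref{lemdec}(iii), in $W^-_{\hat C}$ with $f(D)<f(\hat C)<f(E)$ or in $W^+_{\hat D}$ with $f(D)<f(\hat D)<f(E)$; applying the usual breaking of gradient trajectories to a sequence in $W^-_E\cap W^+_D$ converging to $z$ then produces broken trajectories from $E$ to $\hat C$, from $\hat C$ to $\hat D$ (the latter being the orbit of $z$ itself), and from $\hat D$ to $D$, each with strictly fewer intermediate critical values, so induction and concatenation again complete the proof.

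The main obstacle is the first step --- passing from the set-theoretic condition $\bigcap_i U_{C_i}\neq\emptyset$ to an honest (broken) connecting orbit. This is a Conley-type existence theorem for connecting orbits with no nondegeneracy assumption: one must know that a single trajectory threading arbitrarily small isolating neighbourhoods of two critical components forces a broken gradient trajectory between them, and its proof rests on the compactness of spaces of broken gradient trajectories and on the structure of Morse decompositions of isolated invariant sets --- precisely the nondegeneracy-free machinery that the systems of Morse neighbourhoods (and Lemmas \ref{lem3.1}, \ref{lemdec}) are designed to supply. Everything else --- the concatenation bookkeeping, the closure-relation arguments, and the strictness of $f(C_0)<\dots<f(C_p)$ --- is routine given Lemma \ref{lemdec} and the construction of Remark \ref{decomp}.
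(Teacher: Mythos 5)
Your overall architecture matches the paper's: order the $C_i$ by their critical values, reduce to producing a directed path from $v_{C_{i+1}}$ to $v_{C_i}$ for each consecutive pair, and then induct on the number of critical values strictly between, using the dichotomy ``either the connecting set breaks at an intermediate critical component, or it is closed in the intermediate region and hence defines an arrow.'' Your base case via Lemma \ref{lemdec}(iii) and the concatenation bookkeeping are correct.

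The gap is the analytic engine you invoke twice but do not supply: (a) the passage from $\bigcap_i U_{C_i}\neq\emptyset$ to a broken gradient trajectory from $C_{i+1}$ to $C_i$, and (b) in the inductive step, the claim that a sequence in $W^-_E\cap W^+_D$ converging to a point $z$ of an intermediate stratum yields broken trajectories from $E$ to $\hat C$ and from $\hat D$ to $D$. You attribute both to ``compactness of spaces of broken gradient trajectories,'' but no such theorem is proved in the paper, it does not follow formally from Lemmas \ref{lem3.1} and \ref{lemdec}, and for fully degenerate critical loci it is not an off-the-shelf citation; indeed it is essentially equivalent to the statement being proved, so the appeal to ``the machinery the Morse neighbourhoods are designed to supply'' is circular at exactly this point. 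The paper's proof shows how to avoid it: instead of limits of sequences of orbits it works with the compact set $\overline{\mathcal{W^+\!N}_{C,\mathbb{j}(C)}}\cap W^-_{\tilde C}$ (nonempty because the nested compacta $\mathcal{W^+\!N}_{\tilde C,\mathbb{j}(\tilde C)}\cap f^{-1}[c_j-\epsilon,c_j]$ shrink onto $\tilde C$), and inducts by slicing it at each intermediate critical level $c_k$: either the slice meets some $\hat C\in\mathcal{D}_{c_k}$, in which case one breaks there and applies the inductive hypothesis to the two shorter ranges, or it misses $\mathrm{Crit}(f)$ at every intermediate level, in which case the gradient flow trivialises the whole region as a product and, letting $\mathbb{j}(C)\to\infty$, one concludes that $W^+_C\cap W^-_{\tilde C}$ is itself closed in $f^{-1}(c_i,c_j)$ and so defines an arrow. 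To complete your argument you would need either to prove the Conley-type breaking statement in this degenerate setting or to replace it by this compact-set induction.
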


\begin{proof}
We have $\mathcal{U} = \{ U_C: C \in \mathcal{D} \}$ 
where $U_C = \mathcal{W}^+(\mathcal{N}^\circ_{C,\mathbb{j}(C)})$ and $\mathbb{j}(\tilde{C}) >\!> \mathbb{j}(C) >\!> 1$ when $f(\tilde{C}) > f(C)$.  

Suppose that $C, \tilde{C} \in \mathcal{D}$ and $f(\tilde{C}) = c_j > c_i = f(C)$. Then $U_C \cap U_{\tilde{C}} \neq \emptyset$ if and only if
$U_C \cap U_{\tilde{C}} \cap f^{-1}[c_j - \epsilon,c_j] \neq \emptyset$ for any $\epsilon > 0$, since $U_C \cap U_{\tilde{C}}$ is contained in the (upwards and downwards) flow of its intersection with $f^{-1}[c_j - \epsilon,c_j]$, and this implies that 
$$\overline{U_C} \cap \mathcal{W}^+(\mathcal{N}_{\tilde{C},\mathbb{j}(\tilde{C})}) \cap f^{-1}[c_j - \epsilon,c_j] \neq \emptyset.$$
If $\epsilon >0$ is sufficiently small then $$ \mathcal{W}^+(\mathcal{N}_{\tilde{C},\mathbb{j}(\tilde{C})}) \cap f^{-1}[c_j - \epsilon,c_j] =  \mathcal{N}_{\tilde{C},\mathbb{j}(\tilde{C})}  \cap f^{-1}[c_j - \epsilon,c_j] $$
is a closed submanifold with corners of $M$. Moreover these submanifolds are nested as $\epsilon \to 0$ and $\mathbb{j}(\tilde{C}) \to \infty$ for fixed $\mathbb{j}(C)$, and their intersection is $\tilde{C}$.
So if $U_C \cap U_{\tilde{C}} \neq \emptyset$ for $\mathbb{j}(\tilde{C}) >\!> \mathbb{j}(C) >\!> 1$ then 
$$\overline{ \mathcal{W}^+(\mathcal{N}_{{C},\mathbb{j}({C})})} \cap \tilde{C} \neq \emptyset,$$
or equivalently
$$\overline{ \mathcal{W}^+(\mathcal{N}_{{C},\mathbb{j}({C})})} \cap W^-_{\tilde{C}} \neq \emptyset.$$

Now suppose that $\overline{ \mathcal{W}^+(\mathcal{N}_{{C},\mathbb{j}({C})})} \cap W^-_{\tilde{C}} \neq \emptyset$ where $\mathbb{j}(\tilde{C}) >\!> \mathbb{j}(C) >\!> 1$ when $f(\tilde{C}) > f(C)$. We will prove by induction on $j-i$ where $f(\tilde{C}) = c_j \geqslant c_i = f(C)$ that there is then a path in the quiver $\Gamma$ from $v_{\tilde{C}}$ to $v_C$. If $j=i$ then $\tilde{C} = C$, and if $j-i=1$ then $W^+_C \cap W^-_{\tilde{C}}$ is nonempty and closed in $f^{-1}(f(\tilde{C}),f(C))$ so there is an arrow in $\Gamma$ from $v_{\tilde{C}}$ to $v_C$. 

So suppose that $f(\tilde{C}) = c_j > c_k > c_i = f(C)$. Consider 
$$ S_k = \overline{ \mathcal{W}^+\mathcal{N}_{{C},\mathbb{j}({C})}} \cap W^-_{\tilde{C}} \cap f^{-1}(c_k).$$
If $S_k$ meets some $\hat{C} \in  \mathcal{D}$ with $f(\hat{C}) = c_k$ then  there exists
$$x \in \overline{ \mathcal{W}^+\mathcal{N}_{{C},\mathbb{j}({C})}} \cap W^-_{\hat{C}} \cap f^{-1}(b_k)
$$
and 
$$ y \in W^-_{\tilde{C}} \cap W^-+{\hat{C}} \cap f^{-1}(b_{k+1}) \subseteq \overline{ \mathcal{W}^+\mathcal{N}_{{\hat{C}},\mathbb{j}({\hat{C}})}} \cap W^-_{\tilde{C}} \cap f^{-1}(b_{k+1}),
$$
so by induction there is a path in $\Gamma$ from $v_{\tilde{C}}$ to $v_C$.
If on the other hand $S_k$ meets no $\hat{C} \in  \mathcal{D}$ with $f(\hat{C}) = c_k$ then 
$$ \overline{ \mathcal{W}^+\mathcal{N}_{{C},\mathbb{j}({C})}} \cap W^-_{\tilde{C}} \cap f^{-1}[b_k,b_{k+1}] \cong (\overline{ \mathcal{W}^+\mathcal{N}_{{C},\mathbb{j}({C})}} \cap W^-_{\tilde{C}} \cap f^{-1}(b_k)) \times [b_k,b_{k+1}]$$
via the gradient flow, and if this is true for all $k$ such that $j>k>i$ then 
$$ \overline{ \mathcal{W}^+\mathcal{N}_{{C},\mathbb{j}({C})}} \cap W^-_{\tilde{C}} \cap f^{-1}(c_i,c_j) = { \mathcal{W}^+\mathcal{N}_{{C},\mathbb{j}({C})}} \cap W^-_{\tilde{C}} \cap f^{-1}(c_i,c_j) \cong ({ \mathcal{W}^+\mathcal{N}_{{C},\mathbb{j}({C})}} \cap W^-_{\tilde{C}} \cap f^{-1}(b)) \times (c_i,c_j) 
$$
for any $b \in (c_i,c_j)$ via the gradient flow. Moreover taking the intersection over $\mathbb{j}(C) \to \infty$ we find that 
$$W_C^+  \cap W^-_{\tilde{C}}  = {W}^+_C \cap W^-_{\tilde{C}} \cap f^{-1}(c_i,c_j) \cong (W^+_C \cap W^-_{\tilde{C}} \cap f^{-1}(b)) \times (c_i,c_j) 
$$
is closed in $f^{-1}(c_i,c_j)$, so there is an arrow in $\Gamma$ from $v_{\tilde{C}}$ to $v_C$.

Finally observe that if  $\sigma = \{C_0, \ldots,C_p\} \in \mathrm{Nerve}_p(\mathcal{U})$ then $f(C_i) \neq f(C_j)$ if $i \neq j$, so without loss of generality
$f(C_0) < \cdots < f(C_p)$. Since $U_{C_i} \cap U_{C_j} \neq \emptyset$ for all $i,j$, there must be a path in $\Gamma$ from $v_{C_i}$ to $v_{C_{i-1}}$ for $1 \leqslant i \leqslant p$ and the result follows.
\end{proof}

%The spaces $C_q(\bigcap_{C \in \sigma}U_C)$ are dependent on the choice of Morse cover, but this is not true of
Consider the spectral sequence associated to the filtration of this Mayer--Vietoris double complex $N$ given by 
$$F_\ell N_{p,q} = \bigoplus_{\sigma \in \mathrm{Nerve}_p(\mathcal{U})} C_q(f^{-1}(-\infty, b_\ell ] \cap \bigcap_{C \in \sigma}U_C)$$
where $b_0,\ldots,b_{\rho}$ are defined as at Definition \ref{defnbees} and $U_C = \mathcal{W}^+_C(\mathcal{N}^\circ_{C,\mathbb{j}(C)})$ as in Remark \ref{rem:nerve}. We have
%This independence of the Morse cover follows from 

\begin{lemma} \label{rellemma} The relative homology
$H_k( f^{-1}(-\infty, b_\ell ] \cap \bigcap_{C \in \sigma}U_C, f^{-1}(-\infty, b_{\ell -1} ] \cap \bigcap_{C \in \sigma}U_C;\FF)$ is given by $H_k(\mathcal{N}_{C,\infty}, \partial_- \mathcal{N}_{C,\infty};\FF)$ if $\sigma = \{C\}$ where $C \in \mathcal{D}_{c_\ell}$, and is 0 otherwise.
\end{lemma}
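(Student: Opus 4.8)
The plan is to determine, for each simplex $\sigma\in\mathrm{Nerve}_p(\mathcal{U})$, the homology of the pair $(V_\sigma\cap A_\ell,\,V_\sigma\cap A_{\ell-1})$, where $V_\sigma=\bigcap_{C\in\sigma}U_C$ and $A_m=f^{-1}(-\infty,b_m]$. Two facts drive everything: $c_\ell$ is the unique critical value of $f$ lying in the open interval $(b_{\ell-1},b_\ell)$, so on the slab $f^{-1}\big((b_{\ell-1},b_\ell)\big)$ the gradient flow is free of singularities away from the components in $\mathcal{D}_{c_\ell}$; and away from $\mathcal{N}_{C,\mathbb{j}(C)}$ each $U_C$ is saturated by gradient trajectories, so that restricted to such a slab it is a product of a level slice with an interval, and so that the retractions of $\mathcal{W}^{\pm}\mathcal{N}_{C,n}$ onto $\mathcal{N}_{C,n}$ furnished by Lemma \ref{lem3.1} (and its mirror image) restrict to the $U_C$.

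First I would treat $\sigma=\{C\}$ with $C\in\mathcal{D}_{c_\ell}$. Here $V_\sigma=U_C$, and since $\bigcap_n\mathcal{N}_{C,n}=C\subseteq f^{-1}(c_\ell)$ and $\mathbb{j}(C)\gg1$ we may assume $\mathcal{N}_{C,\mathbb{j}(C)}\subseteq f^{-1}\big((b_{\ell-1},b_\ell)\big)$. Running the gradient flow and applying the retraction of $U_C$ onto $\mathcal{N}_{C,\mathbb{j}(C)}$ from Lemma \ref{lem3.1}, then cleaning up the boundary with excision (\cite{Hatcher} Thm 2.20) and the collar structure near the corners exactly as in the construction of the isomorphisms (\ref{phi}), shows that $(V_\sigma\cap A_\ell,\,V_\sigma\cap A_{\ell-1})$ has the same homology as $(\mathcal{N}_{C,\mathbb{j}(C)},\,\partial_-\mathcal{N}_{C,\mathbb{j}(C)})$; if one prefers not to manufacture an honest homotopy equivalence of pairs, one can instead feed the flow-induced isomorphisms of absolute homology into the five lemma. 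Proposition \ref{Mnexist} then rewrites the answer as $H_k(\mathcal{N}_{C,\infty},\partial_-\mathcal{N}_{C,\infty};\FF)$.

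For every other $\sigma$ I would show the relative homology is zero. The easy cases are when $\sigma$ contains no component in $\mathcal{D}_{c_\ell}$, or when $\sigma$ contains a component whose critical value, together with the direction in which the associated $U_C$ extends, forces $V_\sigma$ to lie inside $A_{\ell-1}$ or to miss the slab $f^{-1}\big((b_{\ell-1},b_\ell)\big)$ altogether: in all of these a single application of the gradient flow either makes $V_\sigma\cap A_\ell=V_\sigma\cap A_{\ell-1}$ or exhibits $V_\sigma\cap A_\ell$ as deformation retracting onto $V_\sigma\cap A_{\ell-1}$ (here one uses that $V_\sigma$ restricted to the singularity-free slab is a product of a level slice with an interval), so the relative homology vanishes. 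What is left is the case $\sigma=\{C_0\}\sqcup\sigma'$ with $C_0\in\mathcal{D}_{c_\ell}$ and $\sigma'$ nonempty and meeting only critical levels other than $c_\ell$ --- the situation in which $V_\sigma$ genuinely straddles the critical level $c_\ell$. Here I would use the gradient flow to push everything into the isolating neighbourhood $\mathcal{N}_{C_0,\mathbb{j}(C_0)}$ (cf. the Remark after (\ref{phi})), where the flow-invariant subsets $U_C$, $C\in\sigma'$, cut out a subset of the pair $(\mathcal{N}_{C_0,\mathbb{j}(C_0)},\,\partial_-\mathcal{N}_{C_0,\mathbb{j}(C_0)})$, and argue that this subset has trivial pair-homology.

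This last case is the real obstacle. In every other case $V_\sigma$ is either situated trivially with respect to the slab $f^{-1}\big((b_{\ell-1},b_\ell)\big)$ or is retracted clear of $c_\ell$ by a single use of the gradient flow; in the mixed case it is not flow-invariant across $c_\ell$, and the vanishing has to be extracted from the local structure near $C_0$. The content of this step is that a nonempty such $\sigma$ records a (possibly broken) gradient trajectory passing through $C_0$, and the corresponding local model near $C_0$ --- a product of a disc with a collar on which $f$ is monotone --- contributes nothing to the filtration by the sets $A_m$; this is exactly the information the $A_m$-filtration throws away and the nerve filtration retains. Verifying it requires keeping careful track of the corners of the Morse neighbourhoods, the collar structure of $f^{-1}(c_\ell)$ along those corners, and the hierarchy $\mathbb{j}(\tilde C)\gg\mathbb{j}(C)$ of Remark \ref{rem:nerve} --- routine but somewhat laborious bookkeeping rather than a new idea.
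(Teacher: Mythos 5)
You correctly isolate the singleton case and handle it as the paper does (flow plus excision), and your treatment of simplices $\sigma$ disjoint from $\mathcal{D}_{c_\ell}$ is in the right spirit. But the case you label ``the real obstacle'' --- $\sigma=\{C_0\}\sqcup\sigma'$ with $C_0\in\mathcal{D}_{c_\ell}$ and $\sigma'\neq\emptyset$ --- is exactly where your argument stops, and the strategy you sketch for it (push everything into $\mathcal{N}_{C_0,\mathbb{j}(C_0)}$ and analyse the traces of the sets $U_C$, $C\in\sigma'$, on the pair $(\mathcal{N}_{C_0,\mathbb{j}(C_0)},\partial_-\mathcal{N}_{C_0,\mathbb{j}(C_0)})$) is not the paper's and is unlikely to close the gap as described: there is no general reason why a flow-saturated open subset of a Morse neighbourhood should have trivial pair homology, and the vanishing here is not a local statement near $C_0$ at all. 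The idea you are missing is that the mixed case needs \emph{no} separate treatment. By Remark \ref{rem:nerve} the components in $\sigma$ have pairwise distinct critical values, so any non-exceptional $\sigma$ contains some $C_j$ with $f(C_j)\neq c_\ell$, and membership in $U_{C_j}$ forces every gradient trajectory through $\bigcap_{C\in\sigma}U_C\cap f^{-1}[b_{\ell-1},b_\ell]$ to traverse the whole slab: downwards to $f^{-1}(b_{\ell-1})$ if $f(C_j)<c_\ell$, upwards to $f^{-1}(b_\ell)$ if $f(C_j)>c_\ell$. This global sweeping of the slab, not the local model near $C_0$, is what kills the relative homology, and it applies verbatim whether or not $\sigma$ also contains a component at level $c_\ell$.

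A second ingredient you omit entirely is Alexander--Spanier duality. When the retraction provided by the flow goes \emph{upwards} (the case $f(C_j)>c_\ell$), it retracts the slab piece onto $f^{-1}(b_\ell)\cap\bigcap_{C\in\sigma}U_C$, i.e.\ onto the \emph{wrong} part of the boundary for the pair whose homology you need to kill, which is taken relative to the bottom $f^{-1}(b_{\ell-1})$. The paper converts ``$\partial_\perp\cup\partial_+$ retracts onto $\partial_+$'' into ``$H_*(\,\cdot\,,\partial_-)=0$'' via the duality
$$H_k\bigl(f^{-1}[b_{\ell-1},b_\ell]\cap\overline{\textstyle\bigcap_{C\in\sigma}U_C},\,\partial_-\bigr)\;\cong\;H_{\dim M-k}\bigl(f^{-1}[b_{\ell-1},b_\ell]\cap\overline{\textstyle\bigcap_{C\in\sigma}U_C},\,\partial_\perp\cup\partial_+\bigr)^*,$$
followed by excision back to the open intersections. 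Your proposal has no substitute for this step; a deformation retraction onto the top level does not by itself say anything about homology relative to the bottom level. So the two genuinely missing pieces are (i) the observation that a component of $\sigma$ off level $c_\ell$ sweeps the whole intersection across the slab, making the mixed case automatic, and (ii) the duality argument for the upward-sweeping case.
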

\begin{proof}
By Remark \ref{rem:nerve} we can let $\sigma = \{C_0, \ldots,C_p\}$ where
$f(C_0) < \cdots < f(C_p)$. By assumption the Morse neighbourhoods $\mathcal{N}{C,\mathbb{j}(C)}$ are chosen as in Remark \ref{decomp} so that the open subsets $U_C = \mathcal{W}^+_C(\mathcal{N}^\circ_{C,\mathbb{j}(C)})$ and their intersections are the interiors of closed submanifolds of $M$ with corners, and so are the intersections of these with the intervals $[b_{\ell -1},b_\ell]$. If $c_\ell > f(C_j)$ for some $j \in \{0,\ldots,p\}$ then the downwards gradient flow of $f$ induces a retraction 
 $$f^{-1}[b_{\ell -1}, b_\ell ] \cap \bigcap_{C \in \sigma}U_C \to f^{-1}(b_{\ell -1}) \cap \bigcap_{C \in \sigma}U_C$$ 
 and so by excision $H_k( f^{-1}(-\infty, b_\ell ] \cap \bigcap_{C \in \sigma}U_C, f^{-1}(-\infty, b_{\ell -1} ] \cap \bigcap_{C \in \sigma}U_C;\FF
)$ is zero. If $c_\ell < f(C_j)$ for some $j \in \{0,\ldots,p\}$ then the upwards gradient flow of $f$ induces a retraction 
$$f^{-1}[b_{\ell -1}, b_\ell ] \cap \bigcap_{C \in \sigma}U_C \to f^{-1}(b_{\ell}) \cap \bigcap_{C \in \sigma}U_C$$ 
which extends to a retraction of closed submanifolds with corners 
$$f^{-1}[b_{\ell -1}, b_\ell ] \cap \overline{\bigcap_{C \in \sigma}U_C} \to f^{-1}(b_{\ell}) \cap \overline{\bigcap_{C \in \sigma}U_C}.$$
The boundary of $f^{-1}[b_{\ell -1}, b_\ell ] \cap \overline{\bigcap_{C \in \sigma}U_C}$ is the union $\partial_- \cup \partial_\perp \cup \partial_+$ of its intersection $\partial_-$ with $f^{-1}(b_{\ell -1})$, its 
intersection $\partial_+$ with $f^{-1}(b_{\ell})$ and the closure $\partial_\perp$ of its intersection  with $f^{-1}(b_{\ell -1},b_\ell)$, and by Alexander-Spanier duality 
$$ H_k( f^{-1}[b_{\ell - 1}, b_\ell ] \cap \overline{\bigcap_{C \in \sigma}U_C}, \partial_-) \cong
H_{\dim M - k} ( f^{-1}[b_{\ell - 1}, b_\ell ] \cap \overline{\bigcap_{C \in \sigma}U_C}, \partial_\perp \cup \partial_+)^*.$$
The retraction 
 $$f^{-1}[b_{\ell -1}, b_\ell ] \cap \overline{\bigcap_{C \in \sigma}U_C} \to f^{-1}(b_{\ell}) \cap \overline{\bigcap_{C \in \sigma}U_C} = \partial_+$$
 restricts to a retraction of $\partial_\perp \cup \partial_+$ to $\partial_+$, and so 
 $H_k( f^{-1}[b_{\ell - 1}, b_\ell ] \cap \overline{\bigcap_{C \in \sigma}U_C}, \partial_-) \cong 0.
$
Since $f^{-1}[b_{\ell - 1}, b_\ell ] \cap {\bigcap_{C \in \sigma}U_C}$ is the interior of the submanifold with corners $f^{-1}[b_{\ell - 1}, b_\ell ] \cap \overline{\bigcap_{C \in \sigma}U_C}$, it follows by excision that 
$$ H_k( f^{-1}(-\infty, b_\ell ] \cap \bigcap_{C \in \sigma}U_C, f^{-1}(-\infty, b_{\ell -1} ] \cap \bigcap_{C \in \sigma}U_C;\FF) \cong 0$$
in this case too.
 Therefore $H_k( f^{-1}(-\infty, b_\ell ] \cap \bigcap_{C \in \sigma}U_C, f^{-1}(-\infty, b_{\ell -1} ] \cap \bigcap_{C \in \sigma}U_C;\FF)$ is zero 
 unless $\sigma = \{C\}$ where $C \in \mathcal{D}_{c_\ell}$. In the latter case the gradient flow combined with excision gives the required result.
\end{proof}

\begin{corollary} \label{relcor}
The $E_1$ page of the spectral sequence associated to the filtration of the Mayer--Vietoris double complex $N$ for the Morse cover $\mathcal{U} = \{U_C: C \in \mathcal{D} \}$ given by 
$F_\ell N_{p,q} = \bigoplus_{\sigma \in \mathrm{Nerve}_p(\mathcal{U})} C_q(f^{-1}(-\infty, b_\ell ] \cap \bigcap_{C \in \sigma}U_C)$ is given by 
$$E^1_{k,\ell} = \bigoplus_{C \in \mathcal{D}_{c_\ell}} H_{k+\ell}(\mathcal{N}_{C,\infty}, \partial_- \mathcal{N}_{C,\infty};\FF).$$
\end{corollary}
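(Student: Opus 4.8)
The plan is to read off the $E_1$ page directly from its definition as the homology of the successive quotients of the filtered total complex, and to reduce that computation to Lemma~\ref{rellemma} using the first spectral sequence of a double complex.

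First I would identify the successive quotient complex. Write $A^\sigma_\ell = f^{-1}(-\infty,b_\ell]\cap\bigcap_{C\in\sigma}U_C$. Since $b_{\ell-1}<b_\ell$ we have $A^\sigma_{\ell-1}\subseteq A^\sigma_\ell$, and both the inclusion-induced maps defining $\partial'$ and the singular boundary $\partial''$ preserve the subspaces $A^\sigma_{\ell-1}$; hence $F_{\ell-1}N$ is a sub-double-complex of $F_\ell N$ and
$$
F_\ell N_{p,q}/F_{\ell-1}N_{p,q}\;=\;\bigoplus_{\sigma\in\mathrm{Nerve}_p(\mathcal{U})} C_q\big(A^\sigma_\ell,\,A^\sigma_{\ell-1}\big),
$$
the relative singular chain group. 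Thus $F_\ell\text{Tot}(N)/F_{\ell-1}\text{Tot}(N)$, equipped with the differential induced by $\partial=\partial'+\partial''$, is exactly the total complex of the first-quadrant double complex $\bar N$ with $\bar N_{p,q}=\bigoplus_{\sigma\in\mathrm{Nerve}_p(\mathcal{U})}C_q(A^\sigma_\ell,A^\sigma_{\ell-1})$. Combined with the formula $E^1_{p,q}\cong H_{p+q}(F_pC/F_{p-1}C)$ recalled above (here $\ell$ is the filtration degree and $k$ the complementary one), this gives $E^1_{k,\ell}=H_{k+\ell}(\text{Tot}(\bar N))$.

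Next I would compute $H_*(\text{Tot}(\bar N))$ by running the first spectral sequence of $\bar N$, taking homology with respect to $\bar\partial''$ first. We have $H_q(\bar N_{p,*})=\bigoplus_{\sigma\in\mathrm{Nerve}_p(\mathcal{U})}H_q(A^\sigma_\ell,A^\sigma_{\ell-1};\FF)$, and by Lemma~\ref{rellemma} each summand vanishes unless $\sigma=\{C\}$ with $C\in\mathcal{D}_{c_\ell}$ (so $p=0$), in which case it equals $H_q(\mathcal{N}_{C,\infty},\partial_-\mathcal{N}_{C,\infty};\FF)$. Hence the $E(1)^1$ page is concentrated in the single column $p=0$, where it is $\bigoplus_{C\in\mathcal{D}_{c_\ell}}H_q(\mathcal{N}_{C,\infty},\partial_-\mathcal{N}_{C,\infty};\FF)$. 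A first-quadrant double-complex spectral sequence always converges, and one concentrated in a single column has all higher differentials zero, so it degenerates at $E(1)^1$ with no extension problems and
$$
H_m(\text{Tot}(\bar N))\;\cong\;\bigoplus_{C\in\mathcal{D}_{c_\ell}}H_m(\mathcal{N}_{C,\infty},\partial_-\mathcal{N}_{C,\infty};\FF).
$$
Taking $m=k+\ell$ and combining with the previous step then yields the stated description of $E^1_{k,\ell}$.

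I do not expect a serious obstacle here: all of the geometry — the gradient-flow retractions, excision, and the Alexander--Spanier-duality vanishing for multi-element simplices $\sigma$ — has already been absorbed into Lemma~\ref{rellemma} and Remark~\ref{rem:nerve}, so what remains is routine homological bookkeeping. The two points that need a little care are the identification of the successive quotients of the filtered total complex with $\text{Tot}(\bar N)$ (in particular verifying that $F_{\ell-1}N$ is genuinely a sub-double-complex), and the observation that the collapse of the first spectral sequence of $\bar N$ onto the $p=0$ column lets one read off $H_*(\text{Tot}(\bar N))$ directly.
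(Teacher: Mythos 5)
Your proposal is correct and is essentially the paper's own argument: the proof given there consists of exactly the two steps you describe, namely identifying $F_\ell N/F_{\ell-1}N$ as a double complex of relative chains and then running its first spectral sequence (the filtration $F^{(1)}$ of (\ref{ffilt})), whose $E_1$ page collapses to the $p=0$ column by Lemma \ref{rellemma}. You have simply spelled out the routine bookkeeping that the paper leaves implicit.
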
 
\begin{proof}
This follows from Lemma \ref{rellemma} using the filtrations $F^{(1)}(F_\ell N/F_{\ell -1} N)$ of the double complexes $F_\ell N/F_{\ell -1} N$ and the associated \lq first spectral sequences' as at (\ref{ffilt}).
\end{proof}

\begin{rem} \label{relrem}
We obtain the same results by replacing $f^{-1}(-\infty,b_\ell]$ with $\bigcap_{f(C) \leqslant b_\ell} \mathcal{W}^-_C(\mathcal{N}_{C,\mathbb{j}(C)})$ chosen as in Remark \ref{rem:nerve}.
\end{rem}

Since this spectral sequence abuts to the homology of $M$, we obtain yet another proof of the generalised Morse inequalities (Theorem \ref{mainthm}). In addition this approach allows us to try to refine this theorem by studying the maps in the spectral sequence.

Recall that the Mayer-Vietoris double complex of the Morse cover $\mathcal{U} = \{ U_C: C \in \mathcal{D} \}$
has differential $d = \partial' + \partial'\!'$ where $\partial'\!'$ is the usual boundary map $N_{p,q} \to N_{p,q-1}$, and $\partial':N_{p,q} \to N_{p-1,q}$ is defined by fixing some total order on $\mathcal{D}$ and defining 
 $$ \partial': 
 C_q( \bigcap_{C \in \sigma}U_C)  \to \bigoplus_{i=0}^p C_q( \bigcap_{C \in \sigma \setminus\{C_i\}}U_C)$$
 for $\sigma \in \mathrm{Nerve}_p(\mathcal{U})$  by
$\partial' = \sum_{i=0}^p (-1)^i \partial_{(i)}$ if $\sigma = \{C_0, \ldots,C_p\}$ where $C_0 < \cdots < C_p$ for this total order, and 
$\partial_{(i)}$ is the chain map induced by 
the inclusion of
$\bigcap_{C \in \sigma}U_C$ in $\bigcap_{C \in \sigma \setminus \{C_i\}}U_C$. The signs are chosen to ensure that $(\partial')^2 = 0 = \partial'\partial'\!' + \partial'\!' \partial$, so that  $d = \partial' + \partial'\!'$ is a differential on $N$; a different choice of total order only affects the resulting double complex up to isomorphism. 
In our situation we can use a different choice of signs obtained by using Remark \ref{rem:nerve} and ordering the elements of $\sigma = \{C_0, \ldots,C_p\}$ according to the values taken by $f$ on $C_0, \ldots,C_p$.

As before, let $C_*(M)$ be the chain complex of singular simplices on $M$ with coefficients in $\FF$, and let $\mathcal{U} = \{U_C: C \in \mathcal{D}\}$ be a Morse cover as above, where
$$U_C = \mathcal{W}^+\mathcal{N}^\circ_{C,\mathbb{j}(C)} \,\,\, \mbox{ if $C \in \mathcal{D}$}$$ for some $\mathbb{j}:\mathcal{D} \to \NN$ such that $\mathbb{j}(C) >\!> 1$ when $C \in \mathcal{D}$ and the Morse neighbourhoods $\mathcal{N}_{C,\mathbb{j}(C)}$ are chosen as in Remark \ref{decomp}. Let
$$C^{\mathcal{U}
}_* (M) = \{ \sum_i n_i \sigma_i \in C_*(M) : \sigma_i \mbox{ has image contained in some element of $\mathcal{U}$} \} . $$
By \cite{Hatcher} Prop 2.21, the inclusion 
$C^{\mathcal{U}}_* (M) \to C_*(M)$ is a chain homotopy equivalence. Moreover if $A \subseteq M$ then there is an induced chain homotopy equivalence
$$ \frac{C^{\mathcal{U}}_* (M) }{C^{\mathcal{U}}_* (A)} \hookrightarrow 
\frac{C_*(M)}{C_*(A)} 
$$
giving an isomorphism on relative homology $H_i^{\mathcal{U}}(M,A;\FF) \cong H_i(M,A; \FF)
$. Similarly we obtain isomorphisms on relative homology $H_i^{\mathcal{U}}(\mathcal{N}_{C,j},\partial_- \mathcal{N}_{C,j};\FF) \cong H_i(\mathcal{N}_{C,j},\partial_- \mathcal{N}_{C,j}; \FF)
$ for each $C \in \mathcal{D}$ and $j$,
giving
\begin{equation} \label{equationu} 
 H_i(\mathcal{N}_{C,j},\partial_- \mathcal{N}_{C,j};\FF) \,\,\,\,\cong \,\,\,\,
 \frac{ \{\xi \in C^{\mathcal{U}}_i (\mathcal{N}_{C,j}): \partial \xi \in C^{\mathcal{U}}_{i-1} (\partial_- \mathcal{N}_{C,j}) \}
}{ \{ \partial \alpha + \beta: \alpha \in C^{\mathcal{U}^{\mathbb{j}}(\mathcal{N})}_{i+1} (\mathcal{N}_{C,j})
, \beta \in C^{\mathcal{U}^{\mathbb{j}}(\mathcal{N})}_i (\partial_- \mathcal{N}_{C,j})
\} }.
\end{equation}

 We would like to define boundary maps $\partial_{i\ell}^k$ from the sum $\bigoplus_{\tilde{C} \in \mathcal{D}_{c_k}} H_i(\mathcal{N}_{\tilde{C},\infty},\partial_- \mathcal{N}_{\tilde{C},\infty};\FF)$
to the sum  $
 \bigoplus_{C \in \mathcal{D}_{c_\ell}} H_{i-1}(\mathcal{N}_{C,\infty},\partial_- \mathcal{N}_{C,\infty};\FF)$ 
to be zero  if $\ell \geqslant k$ and for $\ell < k$ to be given as follows. Suppose  
 $\xi \in C^{\mathcal{U}}_i (\mathcal{N}_{\tilde{C},\mathbb{j}(\tilde{C})})$ with $\partial \xi \in C^{\mathcal{U}}_{i-1} (\partial_- \mathcal{N}_{\tilde{C},\mathbb{j}(\tilde{C})})$. Then we can write
$$ \partial \xi = \sum_{\ell=1}^{k-1} \sum_{C \in \mathcal{D}_{c_{\ell}}}  \zeta_{C} $$
where the image of $\zeta_{C}$ for $C \in \mathcal{D}_{c_{\ell}}$     
lies in the intersection of $\partial_- \mathcal{N}_{\tilde{C}, \mathbb{j}(\tilde{C})}$ with $U_C$, and its boundary lies in the intersection of this with $U_{\hat{C}}$ for $\hat{C} \in \mathcal{D}_{c_{j}}$ with $j \neq \ell$ and $0<j<k$.
We would like to set
$$ \partial_{i\ell} ^k([\xi]) = 
\sum_{C \in \mathcal{D}_{c_{\ell}}}  [\zeta_{C}] ,
$$
 where $[\zeta_{C}] \in  H_{i-1}(\mathcal{N}_{C,\infty},\partial_- \mathcal{N}_{C,\infty};\FF)$ is represented by the element of $ H_{i-1}(\mathcal{N}_{C,\mathbb{j}(C)},\partial_- \mathcal{N}_{C,\mathbb{j}(C)};\FF)$ 
 given by flowing $\zeta_C$ from $\partial_- \mathcal{N}_{\tilde{C}, \mathbb{j}(\tilde{C})} \cap U_C 
$ until it meets $
\mathcal{N}_{C,\, j_\ell}$.
%\end{rem} 

When $\ell = k-1$ this gives us a well defined map from 
%These maps $\partial_{i\ell}^k$ are not necessarily well defined from
 $\bigoplus_{\tilde{C} \in \mathcal{D}_{c_k}} H_i(\mathcal{N}_{\tilde{C},\infty},\partial_- \mathcal{N}_{\tilde{C},\infty};\FF)
$ to the sum 
$ \bigoplus_{C \in \mathcal{D}_{c_\ell}} H_{i-1}(\mathcal{N}_{C,\infty},\partial_- \mathcal{N}_{C,\infty};\FF)$. 
In general we do not get well defined maps $\partial_{i\ell}^k$ from
 $\bigoplus_{\tilde{C} \in \mathcal{D}_{c_k}} H_i(\mathcal{N}_{\tilde{C},\infty},\partial_- \mathcal{N}_{\tilde{C},\infty};\FF)
$ to
$ \bigoplus_{C \in \mathcal{D}_{c_\ell}} H_{i-1}(\mathcal{N}_{C,\infty},\partial_- \mathcal{N}_{C,\infty};\FF)$.
However, with appropriate signs, 
 for $k-\ell = r$ they do define the $E_r$-page of the spectral sequence with 
 $$E^1_{p, q} = \bigoplus_{C \in \mathcal{D}_q} H_{p+q}(\mathcal{N}_{C,\infty}, \partial_- \mathcal{N}_{C,\infty};\FF)$$
 associated to the Mayer--Vietoris double complex of the cover $\mathcal{U} = \{U_c:C \in \mathcal{D}\}$ filtered as above. 
 
 \begin{rem} \label{rem4.5}   We can describe the spectral sequence for the Morse cover $\mathcal{U} = \{ U_C: C \in \mathcal{D} \}$ by relating it to the corresponding spectral sequence for a Morse decomposition 
$\mathcal{V} = \{ V_C: C \in \mathcal{D} \}$
where $V_C = \mathcal{W^+\!N}_{C,
\mathbb{j}(C)} \setminus \bigcup_{\tilde{C} \in \mathcal{D}, f(\tilde{C}) > f(C)} 
 \mathcal{W^+\!N}_{\tilde{C},
\mathbb{j}(\tilde{C})}^\circ $, as described in Remark \ref{decomp}. Here the system of Morse neighbourhoods has been chosen so that each $V_C$ is a closed submanifold of $M$ with corners, meeting along submanifolds with corners of their common boundaries. So we can choose a triangulation $T$ of $M$ which is compatible with triangulations of the submanifolds $V_C$ and unions of intersections $V_{C_1} \cap \cdots \cap V_{C_\ell}$  and their boundaries and corners, as well as intersections of these with $f^{-1}(-\infty, b_\ell ] $ and $f^{-1}(b_\ell)$ for $0<\ell<\rho$. For suitable choices of the Morse decomposition the chain complex $C_*^T(M)$ defined by this triangulation $T$ is contained in the chain complex $C_*^\mathcal{U}(M)$ for the Morse cover $\mathcal{U}$, and the inclusions 
$C^T_*(M) \to C^{\mathcal{U}}_* (M) \to C_*(M)$ are chain homotopy equivalences, with induced chain homotopy equivalences
$$ \frac{C^{T}_* (M) }{C^{T}_* (A)} \to \frac{C^{\mathcal{U}}_* (M) }{C^{\mathcal{U}}_* (A)} \to 
\frac{C_*(M)}{C_*(A)} 
$$
for subsets $A$ of $M$ compatible with the triangulation $T$. There is a corresponding homotopy equivalence given by inclusion into the Mayer--Vietoris double complex of the Morse cover $\mathcal{U}$ of $M$ from the Mayer--Vietoris double complex of the Morse decomposition 
$\mathcal{V} = \{ V_C: C \in \mathcal{D} \}$ with respect to the triangulation $T$ given by 
$$N^{\mathcal{V},T}_{p,q} = \bigoplus_{\sigma \in \mathrm{Nerve}_p(\mathcal{U})} C^T_q(\bigcap_{C \in \sigma}V_C).$$

Note that (cf. Remark \ref{rem:nerve}) %before let $\psi_t(x)$  describe the downwards gradient flow for $f$ from $x$ at time $t$. If $\mathbb{j}(C) >\!> \mathbb{j}(\tilde{C})$ 
when $f(\tilde{C}) > f(C)$ then 
$V_{\tilde{C}}$ only meets  $V_C$ when there are sequences  $C_0 = C, C_1,\ldots, C_m = \tilde{C}$ in $\mathcal{D}$ and
$\ell_0 = \ell < \ell_1 < \cdots < \ell_m = k$
with $f(C_p) = \{c_{\ell_p} \}$  for $0\leqslant p \leqslant m$, such that if $0<p\leqslant m$ %there is a  connected component $A_p$ of 
$$\{ x \in M : \{ \psi_{t}(x): t \leqslant 0\} \mbox{  has a limit point in $C_p$ and } \{ \psi_{t}(x): t \geqslant 0\} \mbox{  has a limit point in $C_{p-1}$} \}$$
%such that if $W^-_{C_p}(A_p)$ is the connected component containing $A_p$ of the Morse stratum  $W^-_{C_p}$ for $C_p$   (as defined at Definition \ref{defnwc}) and $W^-+{C_{p-1}}(A_p)$ is the connected component containing $A_p$ of the Morse stratum $W^+_{C_{p-1}}$ for $C_{p-1}$, then $W^-_{C_p}(A_p) \cap W^-+{C_{p-1}}(A_p)$ i
is nonempty and closed in $f^{-1}(f(C_{p-1}),f(C_p))$; moreover if $\ell_{p-1} = \ell_p - 1$ 
 then this closedness condition is always satisfied. % for every connected component of the intersection  $$\{ x \in M : \{ \psi_{t}(x): t \leqslant 0\} \mbox{  has a limit point in $C_p$ and } \{ \psi_{t}(x): t \geqslant 0\} \mbox{  has a limit point in $C_{p-1}$} \}$$ %subset  is closed in $f^{-1}(f(C_{p-1}),f(C_p))$ of $W^-_{C_p}$ and $W^+_{C_{p-1}}$.
When this closedness condition is satisfied %subset is closed in $f^{-1}(f(C_{p-1}),f(C_p))$ for every such component $A_p$
 then the map $\partial^{p}_{i{p-1}}$ described as follows %by analogy with Remark \ref{rem4.5} 
 on $\xi \in C_i^T(V_{{C}_p})$ with $\partial \xi \in C_{i-1}^T(\partial_-V_{{C}_{p}})$
 induces 
 a well defined map from
  $ H_i(V_{C_p}, \partial_-V_{C_p};\FF) \cong H_i(\mathcal{N}_{C_p,\infty},\partial_- \mathcal{N}_{C_p,\infty};\FF) 
$ to %\bigoplus_{\ell <k}
$ H_{i-1}(\mathcal{N}_{C_{p-1},\infty},\partial_- \mathcal{N}_{C_{p-1},\infty};\FF)$. Here we  write
$$ \partial \xi = \sum_{\ell=1}^{k-1} \sum_{C \in \mathcal{D}_{c_{\ell}}}  \zeta_{C} $$
where  for each $C \in \mathcal{D}_{c_{\ell}}$     we have
 $\zeta_{C} \in C_{i-1}^T(\partial_-V_{{C}_{p}} \cap V_C)$ and
 $$\partial \zeta_{C} \in C_{i-1}^T(\partial_-V_{{C}_{p}} \cap V_C \cap \bigcup_{\hat{C} \neq C} V_{\hat{C}} ).$$
 
 We set
$$ \partial_{i\ell} ^k([\xi]) = 
\sum_{C \in \mathcal{D}_{c_{\ell}}}  [\zeta_{C}]
$$
 where $[\zeta_{C}] \in  H_{i-1}(\mathcal{N}_{C,\infty},\partial_- \mathcal{N}_{C,\infty};\FF)$ is represented by the element of $ H_{i-1}(\mathcal{N}_{C,\mathbb{j}(C)},\partial_- \mathcal{N}_{C,\mathbb{j}(C)};\FF)$ 
 given by flowing $\zeta_C$ from $\partial_- \mathcal{N}_{\tilde{C}, \mathbb{j}(\tilde{C})} \cap U_C 
$ until it meets $
\mathcal{N}_{C,\, j_\ell}$.

Note also that such a connected component must be contained in a connected component of the Morse stratum  $W^-_{C_p}$ for $C_p$   (as defined at Definition \ref{defnwc}) and a  connected component of the Morse stratum $W^+_{C_{p-1}}$ for $C_{p-1}$. Thus we can decompose the maps $ \partial_{i\ell} ^k$ according to these connected components
(cf. Definition \ref{defnquiverrefined} below).  
 \end{rem}

 \begin{rem} In the Morse--Smale situation when $C$ is an isolated critical point we have 
 $$H_j(\mathcal{N}_{C,\infty}, \partial_- \mathcal{N}_{C,\infty};\FF) =  \FF$$ if $j$ is the index of $C$, and is zero otherwise. Moreover $W^-_C \cap W^+_{\tilde{C}} = \emptyset$ unless $\mathrm{index}(C) > \mathrm{index}(\tilde{C})$. This means that the spectral sequence only has nonzero maps between critical points with indices differing by one. %, and, as we will observe in Remark \ref{remcomplexMorse} below, in this case the information provided by the spectral sequence is equivalent to that provided by the Morse--Witten complex. However in general the picture is more complicated, as we will see in $\S$6.
 \end{rem}

 \begin{rem} \label{remcomplexMorse} When $M$ is oriented 
 the components $H_i(\mathcal{N}_{\tilde{C},\infty},\partial_- \mathcal{N}_{\tilde{C},\infty};\FF)
\to  H_{i-1}(\mathcal{N}_{C,\infty},\partial_- \mathcal{N}_{C,\infty};\FF)$ of the maps
 $\partial_{i\ell}^k : \bigoplus_{\tilde{C} \in \mathcal{D}_{c_k}} H_i(\mathcal{N}_{\tilde{C},\infty},\partial_- \mathcal{N}_{\tilde{C},\infty};\FF)
\to 
 \bigoplus_{C \in \mathcal{D}_{c_\ell}} H_{i-1}(\mathcal{N}_{C,\infty},\partial_- \mathcal{N}_{C,\infty};\FF)$ (when well defined) can be described using the isomorphism 
 $$(H_{i}(\mathcal{N}_{C,\infty},\partial_- \mathcal{N}_{C,\infty};\FF))^* \cong 
H_{\dim M - i}(\mathcal{N}_{C,\infty},\partial_+ \mathcal{N}_{C,\infty};\FF) $$
given by the intersection pairing between chains with boundaries in $\partial_\pm \mathcal{N}_{C,n}$ meeting transversally (and therefore not meeting on $\partial \mathcal{N}_{C,n}$). From this viewpoint the component mapping $H_i(\mathcal{N}_{\tilde{C},\infty},\partial_- \mathcal{N}_{\tilde{C},\infty};\FF)
 $ to  $H_{i-1}(\mathcal{N}_{C,\infty},\partial_- \mathcal{N}_{C,\infty};\FF)$ is given by the bilinear pairing
$$H_i(\mathcal{N}_{\tilde{C},\infty},\partial_- \mathcal{N}_{\tilde{C},\infty};\FF)
\otimes  H_{\dim M - i+1}(\mathcal{N}_{C,\infty},\partial_- \mathcal{N}_{C,\infty};\FF) \to \FF$$
which takes a pair $(\xi,\eta)$ such that $\xi \in C_i (\mathcal{N}_{\tilde{C},\mathbb{j}(\tilde{C})})$ with $\partial \xi \in C_{i-1} (\partial_- \mathcal{N}_{\tilde{C},\mathbb{j}(\tilde{C})})$
and  $\eta \in C_{\dim M -i +1} (\mathcal{N}_{{C},{j}_\ell})$ with $\partial \eta \in C_{\dim M - i} (\partial_- \mathcal{N}_{{C},{j}_\ell})$, transports the boundary $\partial \xi$ of $\xi$ under the gradient flow until it meets $
\mathcal{N}_{C,\, j_\ell + 1}$ and takes the intersection pairing of this with $\eta \in C_{\dim M -i +1} (\mathcal{N}_{{C},{j}_\ell})$. 
Equivalently this is given by an intersection pairing in a submanifold with corners of a moduli space of unparametrised flows. 
 When 
$$\{ x \in M : \{ \psi_{t}(x): t \leqslant 0\} \mbox{  has a limit point in $\tilde{C}$ and } \{ \psi_{t}(x): t \geqslant 0\} \mbox{  has a limit point in $C$} \}$$
 is closed in $f^{-1}(f(C),f(\tilde{C}))$ this intersection pairing is well defined, % (see Remark \ref{decomp}),
  but in general it is only well defined on a suitable page of the spectral sequence. In the Morse--Smale situation, when the vector spaces concerned are nonzero then this subset  is always closed in $f^{-1}(f(C),f(\tilde{C}))$, and the intersection pairing counts flow lines in the usual way between critical points whose indices differ by one. Thus in this case the information encoded in the spectral sequence is essentially the same as in the Morse--Witten complex. However in general we have a more complicated picture, which can be described in terms of multicomplexes. 
 \end{rem}
 
 \section{Multicomplexes supported on acyclic quivers} \label{sec:quivers}
 
 In this section we will modify the usual definition of a multicomplex given in $\S$4 (cf. for example \cite{LWZ}) to define multicomplexes supported on acyclic quivers (Definition \ref{defngamma} below). These will be used to generalise the Morse-Witten complex to the situation where $f:M \to \RR$ is any smooth function on a compact Riemannian manifold whose critical locus has finitely many connected components.
 
 Let $\Gamma = (Q_0,Q_1, h, t)$ be an acyclic quiver (or equivalently a directed graph without oriented cycles). Here $Q_0$ and $Q_1$ are finite sets (of vertices and arrows respectively) and $h:Q_1 \to Q_0$ and $t:Q_1 \to Q_0$ are maps (determining the head and tail of any arrow). 
 
 \begin{defn}
 The vertex span $R = \FF^{Q_0}$ and arrow span $A=\FF^{Q_1}$ of $\Gamma$ are the vector spaces of $\FF$-valued functions on $Q_0$ and $Q_1$, with bases identified with $Q_0$ and $Q_1$ via Kronecker delta functions.
 $R$ is a commutative algebra over $\FF$ under pointwise multiplication of functions, with the basis elements in $Q_0$ as idempotents, 
while $A$ is an $R$-bimodule via
 $$(e\cdot f)(a) = e(ha) f(a) \,\,\, \mbox{ and } \,\,\, (f \cdot e)(a) = f(a) e(ta)$$
 for all $e \in R$, $f \in A$ and $a \in Q_1$. 
Any $R$-bimodule $M$ can be decomposed into sub-$R$-bimodules $$M=\bigoplus_{e,\tilde{e} \in Q_0} M_{e,\tilde{e}} \,\, \mbox{ where } \,\, M_{e,\tilde{e}} = eM\tilde{e}.$$
 The path algebra of $\Gamma$ is the graded algebra
 $$ \mathcal{A} = \bigoplus_{d \geqslant 0} A^d$$
 where $A^d = A \otimes_R A \otimes_R \cdots \otimes_R A$ and $A^0 = R$. Here
  $A^d$ has a basis given by the paths
 $$\{ a_1\ldots a_d: a_1, \ldots ,a_d \in Q_1 \mbox{ and } t(a_k)=h(a_{k+1}) \mbox{ for } 1 \leqslant k < d \}
 $$ of length $d$ in $\Gamma$, with multiplication given by concatenation where defined and 0 otherwise.
 Our assumption that $\Gamma$ is acyclic implies that $A^d=0$ when $d$ is large enough, and therefore $\dim_\FF \mathcal{A} < \infty$.  \end{defn}
 
Recall (from for example \cite{DWZ}) that a representation $\rho$ of $\Gamma$ over $\FF$ is given by vector spaces $V_w$ for $w \in Q_0$ and linear maps $\rho(a): V_{h(a)} \to V_{t(a)}$ for $a \in Q_1$. Any such representation $\rho$ induces a representation $V = \bigoplus_{w \in Q_0} V_w$ of the path algebra $\mathcal{A}$ with a path $a_1,\ldots a_d \in A^d$ acting as the composition of the linear maps $\rho(a_1), \ldots, \rho(a_d)$. Conversely any representation of the algebra $\mathcal{A}$ comes from a representation of $\Gamma$.  Let
 $$ e_\Gamma = \sum_{e \in Q_0} e \in R$$
 so that $e_\Gamma b = b = be_\Gamma $ for all $b \in \mathcal{A}$. If $V = \bigoplus_{e \in Q_0} V_e$ is a representation of $\Gamma$ then a linear subspace $W$ of $V$ is a subrepresentation of $\Gamma$ if and only if $e_\Gamma W= W$.
 
\begin{rem}  \label{remdg} If we also assume that $\Gamma$ has no multiple arrows between the same two vertices, then we can make $\mathcal{A}$ into a differential graded algebra where the differential $\delta a$ of an arrow from $v$ to $w$ is the sum of all the paths of length 2 from $v$ to $w$, and $\delta$ is extended to paths via the Leibniz rule (cf. \cite{GLMY}).
Let
 $$D_\Gamma = \sum_{a \in Q_1} a \,\, \in \mathcal{A}.$$
 Then $\delta D_\Gamma = (D_\Gamma)^2$ is given by the sum of all paths of length 2 in $\Gamma$; indeed $(D_\Gamma)^r$ is given by the sum of all paths of length r in $\Gamma$ for any $r\geqslant 1$ and $\delta((D_\Gamma)^r)$ is 0 if $r$ is even and $(D_\Gamma)^{r+1}$ if $r$ is odd.  If we let $\Gamma^r$ be the quiver with set of vertices $Q_0$ and arrows given by paths of length $r$ in $\Gamma$, then its path algebra is $ \bigoplus_{d \geqslant 0} A^{rd}$ and 
 $D_{\Gamma^r} = (D_\Gamma)^r$.
\end{rem}

\begin{defn}
We will say that a subset $Q_0'$ of $Q_0$ defines a final subquiver 
(respectively an initial subquiver) $$\Gamma' = (Q_0',Q_1', h|_{Q_1'},t|_{Q_1'}) \,\, \mbox{ where } \,\, Q_1' = h^{-1}(Q_0') \cap t^{-1}(Q_0')$$
 of $\Gamma$ if it satisfies the condition that $v \in Q_0'$ whenever there exists $w \in Q_0'$ and $a \in Q_1$ with $t(a) = w$ and $h(a) = v$ (respectively $t(a) = v$ and $h(a) = w$).
\end{defn}

Note that if $\Gamma_1$ and $\Gamma_2$ are final subquivers of $\Gamma$ then so are $\Gamma_1 \cap \Gamma_2$ and $\Gamma_1 \cup \Gamma_2$; the same is true for initial subquivers.

\begin{exit}
If $j \geqslant 0$ then the subset 
 of $Q_0$ consisting of those $v \in Q_0$ such that every path from $v$ (respectively to $v$) in $\Gamma$ has length at most $j$ defines a final subquiver (respectively an initial subquiver) of $\Gamma$.\end{exit}

\begin{exit}  \label{rem5.5o}
If $d \in \RR$ and $f$ is any real-valued function on $Q_0$ such that $f(v) > f(w)$ whenever there is an arrow in $\Gamma$ from $v$ to $w$,  then  the subset 
 of $Q_0$ consisting of those $v \in Q_0$ such that $f(v) < d$ (respectively $f(v) > d$) defines a final subquiver (respectively an initial subquiver) of $\Gamma$.
\end{exit}

\begin{defn} \label{defnrquiver}
An $\RR$-quiver $(\Gamma,f)$ is given by a quiver $\Gamma= (Q_0,Q_1,h,t)$ and a function $f:Q_) \to \RR$ which grades $\Gamma$ in the sense that if $a \in Q_1$ is an arrow in $\Gamma$ then $f(h(a)) > f(t(a))$ (cf.\cite{HKKP} $\S$2.1).
\end{defn}

\begin{rem}
If $(\Gamma,f)$ is an $\RR$-quiver then the quiver $\Gamma$ is acyclic.
\end{rem}

\begin{rem}
Example \ref{rem5.5o} can be rephrased to say that if $(\Gamma,f)$ is an $\RR$-quiver and $d \in \RR$ then $\{ v \in Q_0: f(v) < d\}$ defines a final subquiver of $\Gamma$. Conversely if $\Gamma$ is an acyclic quiver and has no multiple arrows and if $\Gamma'$ is a final subquiver of $\Gamma$, then there is some $d \in \RR$ and $f:Q_0 \to \RR$ such that $(\Gamma,f)$ is an $\RR$-quiver and $\Gamma'$ is the subquiver defined by $\{ v \in Q_0: f(v) < d\}$.
\end{rem}

\begin{exit}
If $v_0 \in Q_0$ then the subset $Q_0^{[v_0>]}$ of $Q_0$ consisting of those $v \in Q_0$ such that there is a path from $v_0$ to $v$
 in $\Gamma$ defines a final subquiver $\Gamma^{[v_0>]}$ of $\Gamma$, and $Q_0^{[v_0>]} \cup \{ v_0 \}$  defines a final subquiver $\Gamma^{[v_0 \geqslant ]}$. More generally for any $r \in \NN$ the subset $Q_0^{[v_0\geqslant (r)]}=Q_0^{[v_0>(r-1)]}$ of $Q_0$ consisting of those $v \in Q_0$ such that there is a path from $v_0$ to $v$ of length at least $r$
 in $\Gamma$ defines a final subquiver $\Gamma^{[v_0\geqslant (r)]}=\Gamma^{[v_0>(r-1)]}$ of $\Gamma$. Similarly  the subset $Q_0^{[\leqslant (r) v_0]}$ of $Q_0$ consisting of those $v \in Q_0$ such that there is a path from $v$ to $v_0$ of length at most $r$
 in $\Gamma$ defines an initial subquiver $\Gamma^{[\leqslant (r)v_0]}$ of $\Gamma$. 
\end{exit}

\begin{defn}
Let $V$ be a vector space over $\FF$. A {\it final filtration} $\mathcal{F}$ of $V$ over the quiver $\Gamma$ is given by a subspace $V_{\Gamma'}^\mathcal{F}$ of $V$ for every final subquiver $\Gamma'$ of $\Gamma$ such that
$$V_\emptyset^\mathcal{F} = \{ 0 \} \,\,\, \mbox{ and } \,\,\, V_\Gamma^\mathcal{F} =  V,$$
while
$$ {\Gamma_1} \subseteq{\Gamma_2}  \,\,\, \mbox{ implies } \,\,\,    V_{\Gamma_1}^\mathcal{F} \subseteq  V_{\Gamma_2}^\mathcal{F} $$
for all final subquivers $\Gamma_1$ and $\Gamma_2$ of $\Gamma$. 
The final filtration is {\it strict} if in addition 
$$ V_{\Gamma_1 \cap \Gamma_2}^\mathcal{F} =  V_{\Gamma_1}^\mathcal{F} \cap V_{\Gamma_2}^\mathcal{F}  \,\,\, \mbox{ and } \,\,\,  V_{\Gamma_1 \cup \Gamma_2}^\mathcal{F} =  V_{\Gamma_1}^\mathcal{F} +  V_{\Gamma_2}^\mathcal{F} $$
for all final subquivers $\Gamma_1$ and $\Gamma_2$ of $\Gamma$. The associated $\Gamma$-graded vector space defined by the filtration is
$$\text{gr}_\mathcal{F}(V) = \bigoplus_{v \in Q_0} V_{\Gamma^{[v\geqslant]}}^\mathcal{F}/V_{\Gamma^{[v> ]}}^\mathcal{F}.$$
When the final filtration is strict then $\text{gr}_\mathcal{F}(V)$ is isomorphic as a vector space to $V$.
\end{defn}

\begin{rem}
Initial filtrations can also be defined similarly.
\end{rem}

\begin{rem} \label{filt}
A representation $V = \bigoplus_{w \in Q_0} V_w$ of the quiver $\Gamma$  (or equivalently of its path algebra $\mathcal{A}$) 
has a natural filtration over $\Gamma$, and the associated $\Gamma$-graded vector space can be thought of as the representation of $\Gamma$ given by $V$ but with all arrows represented by zero maps.
\end{rem}

 \begin{defn} \label{defnfiltcomplex}
 A chain complex $C = \oplus_{n \in \ZZ} C_n$ with differential $d:C \to C$ with $d(C_n) \subseteq C_{n-1}$ is {\it filtered over $\Gamma$} if it is equipped with a filtration $\mathcal{F}$ of $C$ such that,
for every final subquiver $\Gamma'$ of $\Gamma$, the differential $d$ maps the subspace $C_{\Gamma'}^\mathcal{F}$ of $C$ into itself.

A {\it complex supported on $\Gamma$} is a representation $\rho$ of $\Gamma$ such that $(\rho(D_{\Gamma}))^2 = 0$, where $D_{\Gamma}$ is as in Remark \ref{remdg}.
 \end{defn}

 \begin{defn} \label{defngamma}
 A {\it multicomplex $N$ supported on the quiver $\Gamma = (Q_0,Q_1,h,t)$} is given by representations $\rho_r$ of $\Gamma^r$ (defined as in Remark \ref{remdg}) for $r\geqslant 1$ with the same vector space  $N_w$ associated to each vertex $w$ of $\Gamma^r$ independent of $r$, 
 such that $\sum_{r\geqslant 1} \rho_r(D_{\Gamma^r})$ is a differential on $\sum_{w \in Q_0} N_w$. Here we can allow the quiver $\Gamma$ to have infinitely many vertices provided that the vector space $N_w$ is nonzero for only finitely many vertices $w$ of $\Gamma$. The multicomplex will be called homogeneous of degree $k$ if $\rho_r (D_{\Gamma^r}) = 0$ unless $r=k$.
 We will call $N$ a {\it strict multicomplex supported on $\Gamma$} if
 $$ \sum_{m+n=j} \rho_m(D_{\Gamma^m})\rho_n(D_{\Gamma^n}) = 0$$
 for every $j > 0$. %, where $\rho_0(D_{\Gamma^0})$ is interpreted as the identity on each vertex. %, and hence  for every $j>0$ we have $(\sum_{1 \leqslant r \leqslant j} \rho_r(D_{\Gamma^r}))^2 = 0$.
 
  A {\it level 1 graded multicomplex $N$ supported on $\Gamma$} is given by %representations $\rho_{n,r}$   for $n\in \ZZ$ and $r\geqslant 1$ of the quivers $\Gamma^{n,r} = (Q_0,(Q_1^r)^n,h^r,t^r)$, where $Q_1^r$ is the set of pairs $(a_1 \ldots a_d,m)$ for $a_1 \ldots a_d \in A^d$ a path in $\Gamma$   with the same vector space  $N_{w,n}$ associated to each vertex $(w,n)$ of $\Gamma^{n,r}$ independent of $r$,  such that $\sum_{n \in \ZZ, r\geqslant 1} \rho_{n,r}(D_{\Gamma^{n,r}})$ is a differential on $N = \bigoplus_{w \in Q_0} \bigoplus_{n \in \ZZ} N_{w,n}$.
    a multicomplex supported on the quiver whose set of vertices is $Q_0 \times \ZZ$ and whose set of arrows is $Q_1 \times \ZZ$ with $(a,n) \in Q_1 \times \ZZ$ representing an arrow to $(h(a),n-1)$ from $(t(a),n)$. Thus $N = \bigoplus_{w \in Q_0} \bigoplus_{n \in \ZZ} N_{w,n}$ has a differential
 $\sum_{r\geqslant 1} \rho_r(D_{\Gamma^r})$ where   $ \rho_r(D_{\Gamma^r})$ maps $N_{w,n}$ to the sum of the subspaces $N_{w',n-1}$ such that there is a path of length $r$ in $\Gamma$ from $w$ to $w'$. A level $j$ graded multicomplex supported on $\Gamma$ for $j>0$ is defined similarly.
  
%  representations $\rho_{n,r}$ of $\Gamma^r$ for $n \in \ZZ$ and $r\geqslant 1$ %such that $(\rho_{n,r})_{r \geqslant 1}$ for fixed $n \in \ZZ$ defines a multicomplex supported on $\Gamma$, together with a differential $d$ given by $d:\rho_{n,r} \to \rho_{n+1,r}$ satisfying $d\circ \rho_{n,r}(D_{\Gamma^r}) = - \rho_{n+1,r}(D_{\Gamma^r})  \circ d$, so that   such that  $$N = \bigoplus_{w \in Q_0} \bigoplus_{n \in \ZZ} N_{w,n}$$ %is a representation of $\Gamma$ graded by $\ZZ$  has differentials $d^{[0]} + d^{[1]} + d^{[2]} + \cdots d^{[j]}$ for $j>0$ such that(i) if $x \in N_{w,n}$ and $a_1 \ldots a_d \in A^d$ is a path in $\Gamma$ of length $d$ then  $a_1 \ldots a_d x \in N_{h(a_1),n-d}$; (ii) $d^{[i]}$ maps $N_{w,n}$ to the sum of the subspaces $N_{w',n-1}$ such that there is a path of length $i$ in $\Gamma$ from $w$ to $w'$

 A {\it double-multicomplex $N$ supported on $\Gamma \times \ZZ$} is given by representations $\rho_{n,r}$ of $\Gamma^r$ for $n \in \ZZ$ and $r\geqslant 1$ such that $(\rho_{n,r})_{r \geqslant 1}$ for fixed $n \in \ZZ$ defines a multicomplex supported on $\Gamma$, together with a differential $d$ given by $d:\rho_{n,r} \to \rho_{n-1,r}$ satisfying $d\circ \rho_{n,r}(D_{\Gamma^r}) = - \rho_{n-1,r}(D_{\Gamma^r})  \circ d$, so that 
 $d + \sum_{n,r} \rho_{n,r}(D_{\Gamma^r})$ is a differential on $N = \bigoplus_{w \in Q_0} \bigoplus_{n \in \ZZ} N_{w,n}$.

%Then  $$N = \bigoplus_{w \in Q_0} \bigoplus_{n \in \ZZ} N_{w,n}$$ %is a representation of $\Gamma$ graded by $\ZZ$
% has differentials $d^{[0]} + d^{[1]} + d^{[2]} + \cdots d^{[j]}$ for $j>0$ such that

%(i) if $x \in N_{w,n}$ and $a_1 \ldots a_d \in A^d$ is a path in $\Gamma$ of length $d$ then  $a_1 \ldots a_d x \in N_{h(a_1),n-d}$;

%(ii) $d^{[i]}$ maps $N_{w,n}$ to the sum of the subspaces $N_{w',n-1}$ such that there is a path of length $i$ in $\Gamma$ from $w$ to $w'$, and

%(iii) $e_\Gamma d^{[i]} = d^{[i]} = d^{[i]} e_\Gamma$\\ for each $n,i \geqslant 0$ and $w \in Q_0$ (where $N_{w,n} = 0$ if $n<0$). 

%Its total complex is the complex with underlying vector space $N$ and differential $d$. The multicomplex will be called homogeneous of degree $k$ if $d^{[i]} = 0$ unless $i=k$.
 \end{defn}

Recall that the nerve $\mathrm{Nerve}(\mathcal{C})$ of a small category $\mathcal{C}$ is a simplicial set with 0-simplices the objects of the category, and for $k>0$ the set $\mathrm{Nerve}(\mathcal{C})_k$ of $k$-simplices consisting of $k$-tuples of composable morphisms $x_0 \to x_1 \to \cdots \to x_k$ in $\mathcal{C}$. The $i$th face map $\mathrm{Nerve}(\mathcal{C})_k \to \mathrm{Nerve}(\mathcal{C})_{k-1}$is given by composition of morphisms at the $i$th object when $0<i<k$ and removal of the $i$th object when $i=0$ or $k$, while the $i$th degeneracy map $s_i: \mathrm{Nerve}(\mathcal{C})_k \to \mathrm{Nerve}(\mathcal{C})_{k+1}$ inserts an extra morphism given by the identity at the $i$th object when $0 \leqslant i \leqslant k$. 

The nerve of an open cover $\mathcal{U} = \{U_i:i \in I\}$ of $M$ as considered in $\S$4 is then the nerve of the category $\mathcal{C}[\mathcal{U}]$ whose objects are nonempty finite intersections in $\mathcal{U}$; that is, they are given by finite $J \subseteq I$ such that $\bigcap_{i \in J} U_i \neq \emptyset$, and whose morphisms are inclusions.

An acyclic quiver $\Gamma = (Q_0,Q_1,h,t)$ generates a finite category $\mathcal{C}((\Gamma))$ with $\mathrm{Ob}(\mathcal{C}((\Gamma))) = Q_0$, morphisms given by paths in $\Gamma$ and composition given by concatenation. In fact we can make this into a strict 2-category such that there is a unique 2-morphism from a path $v_0\ldots v_p$ in $\Gamma$ to a path $w_0\ldots w_q$ in $\Gamma$ if there is a subset $\{j_0<j_1 < \cdots < j_p\}$ of $\{0,1,\cdots,q\}$ such that $v_i = w_{j_i}$ for $0 \leq i \leq p$. When $\Gamma$ is the quiver defined at Definition \ref{6.1below} using a Morse cover $\mathcal{U}$ of $M$ associated to a smooth function $f:M \to \RR$ which has a finite set $\mathcal{D}$ of connected components of $\mathrm{Crit}(f)$, then there is a surjection from $\mathrm{Nerve}(\mathcal{C}((\Gamma))$ to $\mathrm{Ob}(\mathcal{C}[\mathcal{U}]) = \mathrm{Nerve}(\mathcal{C}[\mathcal{U}])_0$ which takes a $k$-tuple of composable paths in $\Gamma$ to its set of endpoints.

Suppose that we have a double complex  $N = \bigoplus_{p,q \in \ZZ} N_{p,q}$  over $\FF$
with differentials $\partial':N \to N$ of bidegree $(-1,0)$ and $\partial'\!':N \to N$ of bidegree $(0,-1)$ satisfying the following two conditions with respect to the quiver $\Gamma$. Firstly we require
\begin{equation} \label{eqn:nerve} N_{p,q} = \bigoplus_{v_0\cdots v_p \in \Gamma_{(p)}} N_{v_0\cdots v_p,q} \end{equation}
where $\Gamma_{(p)}$ is the set of $(p+1)$-tuples $(v_0,\ldots, v_p)$ of vertices in $\Gamma$ such that there is a path in $\Gamma$ from $v_{i-1}$ to $v_i$ for $1 \leq i \leq p$; equivalently $\Gamma_{(p)} = \mathrm{Nerve}(\mathcal{C}((\Gamma)))_p/\sim$ where two $p$-tuples of composable paths in $\Gamma$ are $\sim$-equivalent if and only if the paths in the compositions have the same endpoints. Secondly we require that $\partial':N_{v_0\cdots v_p,q} \to N_{p-1,q}$ is given by $\sum_{i=0}^p (-1)^i\partial_{(i,v_0\cdots v_p,q)}$ where $\partial_{(i,v_0\cdots v_p,q)}:N_{v_0\cdots v_p,q}  \to N_{v_0\cdots \hat{v_i} \cdots v_p,q} $ satisfies \begin{equation} \label{eqn:nerve2}  
\partial_{(i,v_0\cdots \hat{v_j} \cdots v_p,q)} \circ \partial_{(j,v_0\cdots v_p,q)} = \partial_{(j,v_0\cdots \hat{v_i} \cdots v_p,q)} \circ \partial_{(i,v_0\cdots v_p,q)}\end{equation} if $0 \leq i < j \leq p$. 

\begin{exit} \label{exampleprelevelling}
We saw in $\S$4 that these conditions are satisfied by the double complex associated to a suitable Morse cover of a Riemannian manifold $M$ with smooth function $f:M\to \RR$ whose critical set $\mathrm{Crit}(f)$ has finitely many connected components.
\end{exit}

\begin{defn} \label{levelling}
In this situation we will call a function $\phi:\Gamma \times \ZZ \to \RR$ taking only finitely many distinct values $\phi_1< \ldots < \phi_L$ a {\it levelling function} for the double complex $N$ provided that

(i) $\phi(v,q-1) \leqslant \phi(v,q)$ for all vertices $v$ of $\Gamma$ and all $q \in \ZZ$;

(ii) when there is an arrow in $\Gamma$ from $w$ to $v$ then $\phi(v,q) < \phi(w,q)$;

(iii) if we filter the double complex by 
$$ \mathcal{F}_\ell N = \bigoplus_{v_0\cdots v_p \in \Gamma_{(p)}, q \in \ZZ: \phi(v_p,q) \leqslant \phi_\ell} :N_{v_0\cdots v_p,q} ,$$
%where $\phi_1< \ldots < \phi_L$ are the distinct values taken by $\phi$ and $\phi_0 < \phi_1$, 
then for any $v_0\cdots v_p \in \Gamma_{(p)}$
the relative homology 
 of the complex $(\bigoplus_{q: \phi(v_0,q) \leqslant  \phi_\ell} N_{v_0\cdots v_p,\cdot }, \partial'\!')$ and its subcomplex $(\bigoplus_{q: \phi(v_0,q) < \phi_\ell} N_{v_0\cdots v_p,\cdot }, \partial'\!')$
is zero unless $p=0$.
\end{defn}

\begin{rem}
Conditions (i) and (ii) tell us that $\phi$ makes the quiver $(Q_0 \times \ZZ, Q_1 \times \ZZ)$ associated to $\Gamma$ as in Definition \ref{defngamma} into an $\RR$-quiver in the sense of Definition \ref{defnrquiver}.
\end{rem} 

\begin{lemma}  Let $\phi:\Gamma \times \ZZ \to \RR$ be a {levelling function} for the double complex $N$.
The spectral sequence associated to the filtration $ \mathcal{F}_\ell N$ of $N$ has $(\ell,k)$-term in its $E_1$ page given by
$$ \bigoplus_{v \in Q_0: \phi(v,k+\ell) = \phi_{\ell} }  
H_{k+\ell} (N_{v,\cdot};
%\left( \frac{\bigoplus_{v \in Q_0, q \in \ZZ: \phi (v,q) \leqslant  \phi_\ell} N_{v,q }}{\bigoplus_{v \in Q_0, q \in \ZZ: \phi (v,q) <  \phi_\ell} N_{v,q }},
\partial'\!' ),$$
and it abuts to the homology of the total complex $N$.
\end{lemma}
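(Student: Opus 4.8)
The plan is to identify the spectral sequence of the lemma with the standard spectral sequence of the finite filtration $\mathcal{F}_\bullet N$ of the total complex $\text{Tot}(N)$, and then to compute its $E_1$-page by using condition (iii) of Definition \ref{levelling} to kill everything in the associated graded except a single row.

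First I would verify that $0=\mathcal{F}_0 N\subseteq\cdots\subseteq\mathcal{F}_L N=N$ really is a filtration of $\text{Tot}(N)$ by subcomplexes. Finiteness is immediate from the hypothesis that $\phi$ takes only the values $\phi_1<\cdots<\phi_L$. That $d=\partial'+\partial'\!'$ maps each $\mathcal{F}_\ell N$ into itself is exactly what conditions (i) and (ii) are designed to give: $\partial'\!'$ sends $N_{v_0\cdots v_p,q}$ into $N_{v_0\cdots v_p,q-1}$ and $\phi(v_0,q-1)\leqslant\phi(v_0,q)$ by (i); the horizontal differential $\partial'$ is a signed sum of face maps $\partial_{(i)}$, of which those with $i\geqslant 1$ leave the leading vertex $v_0$ and the degree $q$ unchanged, while $\partial_{(0)}$ replaces $v_0$ by the next vertex $v_1$ of the path, for which $\phi(v_1,q)<\phi(v_0,q)$ by (ii); since $\phi$ is $\{\phi_1,\dots,\phi_L\}$-valued this forces $\phi(v_1,q)\leqslant\phi_{\ell-1}$ whenever $\phi(v_0,q)=\phi_\ell$. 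The spectral sequence of a finitely filtered complex (recalled in $\S$\ref{sec:spectral}, cf.\ \cite{Maclane} Ch.\ 16 Thm 5.4) then applies, and because the filtration is finite it converges and abuts to $H_*(\text{Tot}(N))=H_*(N)$; this is the second assertion of the lemma.

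For the $E_1$-page I would compute the homology of the associated graded complex $\mathcal{F}_\ell N/\mathcal{F}_{\ell-1}N$. Since $\mathcal{F}_\bullet$ is a filtration by partial direct sums of the summands $N_{v_0\cdots v_p,q}$, this quotient keeps the $(p,q)$-bigrading of $N$ and is the sum of those summands with $\phi(v_0,q)=\phi_\ell$; by the previous paragraph the induced differential $d_0$ is $\sum_{i=1}^{p}(-1)^i\partial_{(i)}$ together with $\partial'\!'$ followed by the projection onto the level-$\ell$ part (the face map $\partial_{(0)}$ has dropped the level and so contributes nothing). Now I would filter $\mathcal{F}_\ell N/\mathcal{F}_{\ell-1}N$ by its $p$-grading — the \lq\lq first spectral sequence'' of a double complex, exactly as in the proof of Corollary \ref{relcor} — and take $\partial'\!'$-homology row by row. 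The $p$-row is $\bigoplus_{v_0\cdots v_p\in\Gamma_{(p)}}$ of the level-$\ell$ subquotient of the column $(N_{v_0\cdots v_p,\cdot},\partial'\!')$, i.e.\ of the relative complex whose homology condition (iii) controls, and that homology therefore vanishes for every $p\geqslant 1$. Hence this inner spectral sequence collapses onto the $p=0$ row (equivalently: the $p=0$ part of $\mathcal{F}_\ell N/\mathcal{F}_{\ell-1}N$ is a subcomplex whose quotient is the total complex of a double complex with $\partial'\!'$-acyclic columns, hence is acyclic by the acyclic assembly argument, and the inclusion induces an isomorphism on homology), giving $E^1_{\ell,k}\cong\bigoplus_{v\in Q_0}H_{\ell+k}\bigl(\text{level-}\ell\text{ subquotient of }(N_{v,\cdot},\partial'\!')\bigr)$. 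In degree $\ell+k$ that subquotient has underlying group $N_{v,\ell+k}$ when $\phi(v,\ell+k)=\phi_\ell$ and $0$ otherwise, so the only surviving terms are those with $\phi(v,k+\ell)=\phi_\ell$, where this homology is the group written $H_{k+\ell}(N_{v,\cdot};\partial'\!')$ in the statement; this yields the asserted formula.

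The part needing most care is the last step: keeping precise track of which pieces of $\partial'+\partial'\!'$ survive on the associated graded (only $\partial_{(0)}$ strictly lowers the filtration, and $\partial'\!'$ must be read modulo lower levels), and recording that the degree-$(\ell+k)$ homology of the level-$\ell$ subquotient of a column is exactly what is meant by $H_{k+\ell}(N_{v,\cdot};\partial'\!')$ — this is precisely why condition (iii) need only be imposed for $p\geqslant 1$. Collapsing the inner spectral sequence (or running the acyclic assembly argument) also presupposes that $N$ is suitably bounded, e.g.\ a first-quadrant double complex as in the running example of a Morse cover (Example \ref{exampleprelevelling}), so that both spectral sequences are regular; under that standing assumption the argument is complete, and one should finally check that it reproduces Corollary \ref{relcor} in the Morse setting, with the abstract column homology at $v=v_C$ playing the role of $H_{*}(\mathcal{N}_{C,\infty},\partial_-\mathcal{N}_{C,\infty};\FF)$ from Lemma \ref{rellemma}.
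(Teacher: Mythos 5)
Your proof is correct and follows essentially the same route as the paper: the standard spectral sequence of the finite filtration $\mathcal{F}_\bullet N$ gives abutment to $H_*(\text{Tot}(N))$, and the $E_1$-term is computed by running the first (column-filtration) spectral sequence on each $\mathcal{F}_\ell N/\mathcal{F}_{\ell-1}N$ and invoking condition (iii) to kill every row with $p\geqslant 1$, exactly as in the paper's appeal to the proof of Corollary \ref{relcor}. You supply details the paper leaves implicit — notably the check that only $\partial_{(0)}$ drops the filtration level and that $H_{k+\ell}(N_{v,\cdot};\partial'\!')$ must be read as the homology of the level-$\ell$ subquotient of the column — but these are elaborations of, not departures from, the paper's argument.
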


\begin{proof}
We follow the proof of Corollary \ref{relcor}, using the \lq first spectral sequence' associated to the filtration $F^{(1)}(  \mathcal{F}_\ell N/ \mathcal{F}_{\ell -1} N)$ of the double complex $ \mathcal{F}_\ell N/ \mathcal{F}_{\ell -1} N$ defined as at (\ref{ffilt}). By Definition \ref{levelling} (ii) the $(p,q)$th term in the $E_1$ page of this spectral sequence is
$$\bigoplus_{v_0\cdots v_p \in \Gamma_{(p)}:\phi(v_0,p+q)= \phi_\ell} H_{p+q}(N_{v_0\cdots v_p,\cdot};\partial'\!' ),$$
and by (iii) this is zero unless $p=0$ in which case it is 
$$ \bigoplus_{v_0 \in Q_0: \phi(v_0,q) = \phi_{\ell} }  
H_{q} (N_{v_0,\cdot};
%\left( \frac{\bigoplus_{v \in Q_0, q \in \ZZ: \phi (v,q) \leqslant  \phi_\ell} N_{v,q }}{\bigoplus_{v \in Q_0, q \in \ZZ: \phi (v,q) <  \phi_\ell} N_{v,q }},
\partial'\!' ).$$
The result follows.
\end{proof}

\begin{rem} \label{e1}
The $E_1$ page 
$$ \bigoplus_{k,\ell} \bigoplus_{v \in Q_0: \phi(v,k+\ell) = \phi_{\ell} }  
H_{k+\ell} (N_{v,\cdot};
%\left( \frac{\bigoplus_{v \in Q_0, q \in \ZZ: \phi (v,q) \leqslant  \phi_\ell} N_{v,q }}{\bigoplus_{v \in Q_0, q \in \ZZ: \phi (v,q) <  \phi_\ell} N_{v,q }},
\partial'\!' )
=\bigoplus_{v \in Q_0, n \in \ZZ}  
H_{n} (N_{v,\cdot}; \partial'\!' )
$$
of the spectral sequence associated to the filtration $ \mathcal{F}_\ell N$ of $N$ becomes a level 1 graded multicomplex supported on $\Gamma$ in the sense of Definition \ref{defngamma} in a unique way determined by the requirement that the differential should be $\sum_{r\geqslant 1} \rho_r(D_{\Gamma^r})$ where   $ \rho_r(D_{\Gamma^r})$ maps $H_{n} (N_{w,\cdot}; \partial'\!' )$ to the sum of the subspaces $H_{n-1} (N_{w',\cdot}; \partial'\!' )$ such that there is a path of length $r$ in $\Gamma$ from $w$ to $w'$.
\end{rem}

\begin{exit}  \label{examplelevelling}
In the situation of Example \ref{exampleprelevelling} we can take $\phi(v_C,q) = f(C)$ and we recover the spectral sequence described in $\S$4; we will call this choice of levelling function the Morse levelling function. More generally for any $\phi$ we find that 
arrows from $C_k$ to $C_\ell$ are represented by the maps $\partial^k_{i\ell}$ as in Remark \ref{rem4.5}. %These are well defined at the $E_1$ stage for paths of length 1,  at the $E_2$ stage there are well defined induced maps corresponding to paths of length two in $\Gamma$ and so on.

When $f:M \to \RR$ is Morse--Smale we can choose $\phi$ so that $\phi(v_C,q) = q$ when $C$ is a critical point of index $q$ and recover the Morse--Witten complex; more precisely, the spectral sequence degenerates after the $E_1$-page and the latter can be identified with the Morse--Witten complex.

For any pair $(f,g)$ where $g$ is a Riemannian metric on $M$ and $f:M \to \RR$ is smooth with finitely many critical components, we get an $\RR$-quiver $\Gamma$  and a final filtration of the homology of $M$ over $\Gamma$, with spectral sequences  abutting to $H_*(M;\FF)$ whose $E_1$ pages are level 1 graded multicomplexes over $\Gamma$.
\end{exit}

\begin{rem} \label{splitquiver}
Suppose that we have another acyclic quiver $\tilde{\Gamma} = (\tilde{Q}_0,\tilde{Q}_1,\tilde{h},\tilde{t})$ (again with no multiple arrows) which  is a perturbation $\eta: \tilde{\Gamma} \to \Gamma$ of the quiver $\Gamma$, in the sense that each vertex of $\Gamma$ has split into finitely many vertices of $\tilde{\Gamma}$, and each path in $\Gamma$ has split into finitely many paths in $\tilde{\Gamma}$ in a compatible way. More precisely we require firstly a surjection $\eta_0: \tilde{Q}_0 \to Q_0$, which allows us to identify the vertex span $R$ for $\Gamma$ with the subalgebra of the vertex span $\tilde{R}$ for $\tilde{\Gamma}$ spanned by the idempotents
$$  \sum_{ w \in \eta_0^{-1}(v)}  w \,\,\,\,\,\, \,\,\, \mbox{ for } v \in Q_0,$$
and then $e_\Gamma = e_{\tilde{\Gamma}}$ and the path algebra $\tilde{\mathcal{A}}$ for $\tilde{\Gamma}$ becomes an $R$-bimodule. Secondly there should be a surjection
$$\eta_1: \{ \mbox{paths in } \tilde{\Gamma} \} \to  \{ \mbox{paths in } {\Gamma} \} $$ 
such that $\eta_0( \tilde{h}(a_d)) = h ( \eta_1 (a_1 \ldots a_d))$ and $\eta_0( \tilde{t}(a_1)) = t (\eta_1 (a_1 \ldots a_d))$ for every path $ a_1 \ldots a_d$ in $\tilde{\Gamma}$, inducing a surjection $\tilde{\mathcal{A}} \to \mathcal{A}$ which respects the multiplication and  $R$-bimodule structure but not necessarily the grading given by path-length. 

%In this situation every final subquiver $\Gamma'$ of $\Gamma$ determined by $Q_0' \subseteq Q_0$ defines a final subquiver $\eta^*\Gamma'$ of $\tilde{\Gamma}$ determined by $\eta_0^{-1}(Q_0')$. Thus any vector space $V$ with a filtration  $$\mathcal{F} = \{V_{\tilde{\Gamma}'}^\mathcal{F}: \tilde{\Gamma}' \mbox{ a final subquiver of }\tilde{\Gamma} \}$$ over $\tilde{\Gamma}$ has an induced filtration $$\eta_*\mathcal{F} = \{V_{\eta^*(\Gamma')}^\mathcal{F}: \Gamma' \mbox{ a final subquiver of }{\Gamma} \}$$ over ${\Gamma}$, which is refined by the filtration $\mathcal{F}$ over $\tilde{\Gamma}$. 
Now suppose that $N = \bigoplus_{w \in \tilde{Q}_0} %\bigoplus_{n \in \NN} 
N_{w}$ is a multicomplex supported over $\tilde{\Gamma}$. Then $N$ can be regarded as a multicomplex supported on $\Gamma$ by writing 
$$ N = \bigoplus_{v \in Q_0}\,\, \left(   \bigoplus_{w \in \eta_0^{-1}(v)}  % \bigoplus_{n \in \NN} N_{w,n} 
N_w \right). $$
%and %for any $a \in Q_1$ letting $\rho(a)$ be the linear combination
%$$ \sum_{\tilde{a} \in \eta_1^{-1}(a) \cap \tilde{A}^1} \lambda_{\tilde{a}} \tilde{a}  $$ of the linear maps representing the elements of $\eta_1^{-1}(a)$ which have length one.  In this way $N$ becomes a multicomplex supported over $\Gamma$, where the total complex of  $N$ is the same whether we regard it as a multicomplex over  $\tilde{\Gamma}$ or $\Gamma$.

%By Remark \ref{filt} $N$ has a filtration $\mathcal{F}$ over $\tilde{\Gamma}$, which is preserved by the differential on $N$, and hence $N$  has an induced filtration $\eta_*\mathcal{F}$ over $\Gamma$, also  preserved by the differential on $N$. There is an induced differential on the associated graded vector space $$\text{gr}_{\eta_*\mathcal{F}}(V) = \bigoplus_{v \in Q_0} V_{\eta^*(\Gamma^{[\geqslant v]})}^\mathcal{F}/V_{\eta^*(\Gamma^{[> v]})}^\mathcal{F},$$ and as above, this gives us the $E_0$ page of a spectral sequence of multicomplexes over $\Gamma$ which abuts to the homology of the total complex of the multicomplex $N$ with its induced filtration over $\Gamma$. 
\end{rem}

\section{Morse--Smale perturbations } \label{sec:multi}

As before let $f:M \to \RR$ be a smooth function on a compact Riemannian manifold  whose critical locus $\text{Crit}(f)$ has finitely many connected components, and let $\mathcal{D}$ be the set of connected components of $\text{Crit}(f)$. In this section we will study the spectral sequences introduced in $\S$5 with $E_1$ pages given by multicomplexes supported on quivers 
(which refine the information given by the vector spaces $H_i(\mathcal{N}_{C,\infty},\partial_- \mathcal{N}_{C,\infty};\FF)$ and in the Morse--Smale situation reduce to the Morse--Witten complex), by relating them to the Morse--Witten complexes of Morse--Smale perturbations of $f:M\to \RR$.
 
 When $(f,g)$ is Morse--Smale then $\text{Crit}(f) = \mathcal{D}$ and if $C \in \mathcal{D}$ then $H_i(\mathcal{N}_{C,\infty},\partial_- \mathcal{N}_{C,\infty};\FF)$ is one-dimensional if $i$ is the index of the critical point in $C$ and otherwise is 0. 
Recall (from for example \cite{CohenNorbury}) that in this situation   there is an associated graph $\Gamma$ (more precisely a multi-digraph or quiver) embedded in $M$, with vertices given by the critical points of $f$ and arrows joining a critical point $p$ of index $i$ to a critical point $q$ of index $i-1$ given by the (finitely many) gradient flow lines from $p$ to $q$. The Morse--Witten complex is the differential module with basis $\text{Crit}(f)$ and differential 
$$\partial p = \sum_{q \in \text{Crit}(f)} n(p,q) q $$
where $n(p,q)$ is the number of flow lines from $p$ to $q$, counted with signs determined (canonically) by suitable choices of local orientations. 
It is usually regarded as a complex via the grading by index (which is essentially the homological degree), but 
when we grade it using the homological degree and the critical value, then we have a multicomplex $N = \bigoplus_{p,q \in \NN} N_{p,q}$ with 
$N_{p,q} = \bigoplus_{C \in \mathcal{D}_q} H_{p+q}(\mathcal{N}_{C,\infty}, \partial_- \mathcal{N}_{C,\infty};\FF)$
which is supported on $\Gamma$, in the sense that 
the total differential $\partial$ is the sum of linear maps from $H_{p+q}(\mathcal{N}_{C,\infty}, \partial_- \mathcal{N}_{C,\infty};\FF)$ to $H_{k+\ell}(\mathcal{N}_{C',\infty}, \partial_- \mathcal{N}_{C',\infty};\FF)$ for $C \in \mathcal{D}_q$ and $C' \in \mathcal{D}_\ell$
 of bidegree $(-i,i-1)$, one for each arrow in $\Gamma$ from $C$ to $C'$. The spectral sequence described at the end of $\S4$ contains the information in this (multi)complex, but it is packaged more efficiently in the Morse--Witten complex, which appears as the $E_1$ page of an alternative spectral sequence defined in $\S$5 (see Example \ref{examplelevelling}).
 
In the Morse--Smale situation a flow line from a critical point $p$ of index $i$ to a critical point $q$ of index $i-1$ is a connected component of 
$$\{ x \in M : \{ \psi_{t}(x): t \leqslant 0\} \mbox{  has a limit point in $\{p\}$ and } \{ \psi_{t}(x): t \geqslant 0\} \mbox{  has a limit point in $\{q\}$} \},$$
which is always closed in $f^{-1}(f(q),f(p))$ (where $\psi_t(x)$  describes the downwards gradient flow for $f$ from $x$ at time $t$).
In our more general situation recall that we associated to the smooth function $f:M \to \RR$ on the Riemannian manifold $M$ a quiver $\Gamma$ (see Definition \ref{6.1below}) which has  vertices $\{v_C:C \in \mathcal{D}\}$ labelled by the connected components $C$ of $\text{Crit}(f)$, and an arrow from $v_{C_+}$ to $v_{C_-}$ for each connected component of 
$$\{ x \in M : \{ \psi_{t}(x): t \leqslant 0\} \mbox{  has a limit point in $C_+$ and } \{ \psi_{t}(x): t \geqslant 0\} \mbox{  has a limit point in $C_-$} \}$$
whenever this subset is closed in $f^{-1}(f(C_-),f(C_+))$. Moreover given a \lq levelling function' $\phi$ as at Definition \ref{levelling} we obtain a spectral sequence abutting to the homology of $M$ whose $E_1$ page is a  a level 1 graded multicomplex supported on $\Gamma$ in the sense of Definition \ref{defngamma} (see Remark \ref{e1}).
%\begin{rem} \label{rem5.1} When $(f,g)$ is Morse--Smale then the Morse--Witten complex of $f$ can be regarded as a multicomplex supported on $\Gamma$ in the sense of Definition \ref{defngamma}, which is homogeneous of degree 1. When $f:M \to \RR$ is smooth and $\text{Crit}(f)$ has finitely many connected components then a similar argument shows that there is a spectral sequence of multicomplexes supported on $\Gamma$ in the sense of Definition \ref{defnspectral}, where $C \in \mathcal{D}$ is represented in the $E_1$ page  by $H_*(\mathcal{N}_{C,\infty},\partial_- \mathcal{N}_{C,\infty};\FF)$, and \end{rem}
 %The analogue of the Morse--Witten complex is not in general a differential module which is a multicomplex supported on $\Gamma$ with homology $H_*(M;\FF)$, but instead the spectral sequence of multicomplexes supported on the quiver $\Gamma$ described in Remark \ref{rem5.1}. $H_*(M;\FF)$ has a natural filtration over $\Gamma$ such that this spectral sequence abuts to $H_*(M;\FF)$ in the sense of Definition \ref{defnabut}. 
One way to study these spectral sequences is to use the multicomplexes given by the Morse--Witten complexes for  Morse--Smale perturbations of the smooth function $f:M \to \RR$. In order for this to work we need to describe the relative homology $H_*(\mathcal{N}_{C,\infty},\partial_- \mathcal{N}_{C,\infty};\FF)$ in Morse--Witten terms. 

There are many descriptions of the relative (co)homology of a manifold with boundary which are appropriate for Morse theory \cite{Akaho,BNR,
CDGM, ChangLiu,MM,Schwarz, Shonkwiler} and many of them can be adapted to apply to manifolds with corners, whose boundaries decompose as the Morse neighbourhoods do, to give a description of the homology of the manifold relative to part of the boundary. We will use the approach of Laudenbach \cite{Laudenbach} and will avoid having to deal with corners by choosing a system of smooth Morse neighbourhoods (Remark \ref{modify}).

Laudenbach considers a compact manifold $M$ with non-empty boundary, and defines a smooth real-valued function $f$ on $M$ to be Morse when its critical points lie in the interior of $M$ and are nondegenerate, and its restriction to the boundary is a Morse function in the usual sense; being Morse in this sense is generic among smooth functions on $M$. Then there are two types of critical points of the restriction of $f$ to $\partial M$: type N (for Neumann) when the gradient flow is pointing into $M$ and type D (for Dirichlet) when the gradient flow is pointing out of $M$. The homotopy type (and homology) of $\{x \in M:f(x) \leqslant a\}$ may change when $a$ crosses a critical value of $f$ on the interior of $M$ or the value of a critical point of type N of $f |_{\partial M}$ but not  when $a$ crosses  the value of a critical point of type D of $f |_{\partial M}$. Similarly the homotopy type of $M$ relative to its boundary (and the relative homology $H_*(M,\partial M;\FF)$ may change when $a$ crosses a critical value of $f$ on the interior of $M$ or the value of a critical point of type D of $f |_{\partial M}$ but not  when $a$ crosses  the value of a critical point of type N of $f |_{\partial M}$. 

\begin{defn}
Let $f:M \to \RR$ be a Morse function in Laudenbach's sense on a compact manifold $M$ with boundary. Let

$C_k$ be the set of critical points of $f$ on the interior of $M$ with index $k$;

$N_k$ be the set of critical points of $f$ on $\partial M$ of type N and index $k$;

$D_k$ be the set of critical points of $f$ on $\partial M$ of type D and index $k-1$.
\end{defn}

Laudenbach proves

\begin{thm} 
(i) There is a differential on the free graded $\ZZ$-module generated by $C_* \cup N_*$ such that it is a chain complex whose homology is isomorphic to the homology of $M$. 

(ii) There is a  co-differential on the free graded $\ZZ$-module generated by $C_* \cup D_*$ such that it is a cochain complex whose cohomology is isomorphic to the relative cohomology $H^*(M,\partial M;\ZZ^{\text{or}})$ with coefficients twisted by the local system of orientations on $M$. 
\end{thm}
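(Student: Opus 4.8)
The result is classical, due to Laudenbach \cite{Laudenbach}; the plan is to recall the proof, which couples the standard handle‑attachment picture of Morse theory with a local analysis at the two types of boundary critical points, and then to derive (ii) from (i) by replacing $f$ with $-f$ and invoking Poincar\'e--Lefschetz duality. First I would establish (i) by tracking how the homotopy type of the sublevel set $M_a = \{x\in M: f(x)\leqslant a\}$ changes as $a$ increases. On an interval of regular values containing no value of $f$ at a boundary critical point, a pseudo‑gradient vector field for $f$ which has been modified to be tangent to $\partial M$ near the (isolated) critical points of $f|_{\partial M}$ gives a deformation retraction of $M_b$ onto $M_a$, so nothing changes. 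When $a$ crosses a critical value $c$ there are three local models: at an interior critical point of index $\lambda$ the usual Morse model $f = c - |y|^2 + |z|^2$, $y\in\RR^\lambda$, shows a $\lambda$‑cell is attached; at a type N boundary point one may choose coordinates with $M = \{x_n\geqslant 0\}$, $\grad(f)$ pointing in the $+x_n$ direction, and $f = c + x_n - |y|^2 + |z|^2$ with $y\in\RR^\lambda$ (the index of $f|_{\partial M}$ there), and a direct inspection of the region $\{x_n\geqslant 0,\ x_n + |z|^2 - |y|^2 \leqslant b-c\}$ shows that again a $\lambda$‑cell is attached, with core a $\lambda$‑disc in $\partial M$; at a type D boundary point $f = c - x_n - |y|^2 + |z|^2$, and the region $\{x_n \geqslant \max(0,\ |y|^2 - |z|^2 + c - b)\}$ deformation retracts off the boundary, so no cell is attached. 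Hence $M$ is homotopy equivalent to a CW complex with exactly one $\lambda$‑cell for each element of $C_\lambda$ and one $\lambda$‑cell for each element of $N_\lambda$.

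Next I would upgrade this CW structure to a chain complex. After a $C^1$‑small perturbation of the metric (equivalently of the adapted pseudo‑gradient), away from small neighbourhoods of the critical points and of $\partial M$ so as not to disturb the local models, one arranges the Morse--Smale condition: the unstable manifolds of the points of $C_*\cup N_*$ (genuine open cells of the expected dimension, those of a type N point lying in $\partial M$) meet the stable manifolds transversally, and the relevant stable/unstable manifolds meet $\partial M$ transversally. Then the moduli spaces of unparametrised trajectories between critical points whose degrees differ by one are finite, those with degree difference two are compact $1$‑manifolds with boundary (the usual compactness‑and‑gluing argument carries over near $\partial M$ because the pseudo‑gradient has been made tangent to $\partial M$ at the boundary critical points), so the signed count $\partial$ satisfies $\partial^2 = 0$; a coherent choice of orientations of unstable manifolds makes $\partial$ a $\ZZ$‑linear differential, and the resulting complex is isomorphic to the cellular chain complex of the CW structure above. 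Its homology is therefore $H_*(M;\ZZ)$, proving (i).

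For (ii) I would apply (i) to $-f$. The interior critical points are unchanged but an index‑$\lambda$ interior critical point of $f$ has index $n-\lambda$ for $-f$; since $\grad(-f) = -\grad(f)$, the type N boundary critical points of $-f$ are precisely the type D boundary critical points of $f$, and a critical point of $f|_{\partial M}$ of index $\lambda$ has index $(n-1)-\lambda$ for $(-f)|_{\partial M}$. Thus (i) for $-f$ produces a $\ZZ$‑chain complex on the free module generated by $C_*\cup D_*$ (placing an element of $D_k$, i.e.\ a type D critical point of $f|_{\partial M}$ of index $k-1$, in degree $n-k$, and an interior index‑$\lambda$ point in degree $n-\lambda$) whose differential counts trajectories of $-\grad(f)$ and whose homology is $H_*(M;\ZZ)$. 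Re‑indexing by $k\mapsto n-k$ turns this into a cochain complex on the free graded module generated by $C_*\cup D_*$, with $C_k$ and $D_k$ in degree $k$ and a co‑differential counting $\grad(f)$‑trajectories from lower to higher critical points; by Poincar\'e--Lefschetz duality for the compact manifold with boundary $M$ --- valid whether or not $M$ is orientable, in the form $H_{n-k}(M;\ZZ)\cong H^k(M,\partial M;\ZZ^{\text{or}})$ (cf.\ \cite{Hatcher}) --- the cohomology of this cochain complex is $H^*(M,\partial M;\ZZ^{\text{or}})$, where the orientation twist arises exactly from the non‑orientable Lefschetz isomorphism. This proves (ii).

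The main obstacle is the transversality and compactness analysis near $\partial M$: trajectories can approach, run along, or hit the boundary, so one must work with a pseudo‑gradient adapted to $\partial M$ and check that genericity of the metric can be achieved within this class while preserving the local models at boundary critical points. An alternative that bypasses much of this is to double $M$ along $\partial M$ and extend $f$ to a closed Morse function $Df$ by smoothing the local expressions $c\pm|x_n| + Q$ (a type N point of index $\lambda$ becoming an interior critical point of index $\lambda$, a type D point of index $\lambda$ becoming one of index $\lambda+1$), reducing to ordinary closed‑manifold Morse homology on $DM$ at the cost of keeping track of the resulting involution and the Mayer--Vietoris decomposition; either route yields the theorem.
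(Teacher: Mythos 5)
Your proposal is correct in outline, but it establishes part (i) by a genuinely different mechanism from the one the paper attributes to Laudenbach. You keep the type N boundary critical points throughout: you analyse the change in the sublevel sets directly at all three local models, showing that a type N point of boundary index $\lambda$ attaches a $\lambda$-cell (with core in $\partial M$) while a type D point attaches nothing, and then identify the flow-line differential with the cellular differential via a Morse--Smale perturbation and a compactness-and-gluing argument for trajectory spaces near $\partial M$. Laudenbach's argument, as sketched in the paper, instead \emph{eliminates} the type N points first: a local modification of $(f,X)$ near each type N point trades it for one type D boundary point plus one interior critical point without changing the Morse complex; once no type N points remain the complex "does not see the boundary," one arranges weak self-indexing, and the result is the ordinary cellular chain complex of a CW structure. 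Your route buys a more direct geometric picture (every generator visibly corresponds to a cell) at the cost of the boundary transversality and compactness analysis, which you rightly flag as the main obstacle and which Laudenbach's trade-off lemma is designed to sidestep; his route buys a reduction to the closed-manifold argument at the cost of the local surgery lemma. Your derivation of (ii) from (i) by passing to $-f$ (which exchanges types N and D and reverses indices) and invoking Poincar\'e--Lefschetz duality $H_{n-k}(M;\ZZ)\cong H^k(M,\partial M;\ZZ^{\text{or}})$ is correct, and the degree bookkeeping for $D_k$ (index $k-1$ on $\partial M$, hence degree $n-k$ for $-f$, hence degree $k$ after reindexing) is consistent with the paper's conventions. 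The doubling alternative you mention is also a legitimate, and in some respects cleaner, way to obtain both statements.
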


His proof of (i) involves using a pseudo-gradient vector field $X$ for $f$ which is adapted to the boundary in the sense that 

a) $X.f < 0$ except at the critical points in the interior of $M$ and the critical points of type $N$ on $\partial M$;

b) $X$ points inwards along the boundary except near the critical points of type N on $\partial M$ where it is tangent to the boundary;

c) if $p$ is a critical point in the interior of $M$, then $X$ is hyperbolic at $p$ and the quadratic form $X^{\text{lin}}.d^2_p f$ (where $X^{\text{lin}}$ is the linear part of $X$ at $p$) is negative definite;

d) near any critical point $p$ of type N for $f |_{\partial M}$ there are coordinates $x = (y,z) \in \RR^{n-1} \times \RR$ on $M$ such that $M$ is given locally by $z \geqslant 0$ and $f$ is given locally by $f(x) = f(p) + z + q(y)$ where $q$ is a nondegenerate quadratic form and $X$ is tangent to the boundary, vanishing and hyperbolic at $p$ and $X^{\text{lin}}.(q(y) + z^2)$ is negative definite;

e) $X$ is Morse--Smale, meaning that the global unstable manifolds and local stable manifolds are mutually transverse.

It is shown in \cite{Laudenbach} that a pseudo-gradient $X$ for $f$ adapted to the boundary always exists, that there is a differential on the free graded $\ZZ$-module generated by $C_* \cup N_*$ given for $p \in C_k \cup N_k$ by
\begin{equation} \label{Lcomplex}  \partial p = \sum_{q \in C_{k-1} \cup N_{k-1} } m_{pq} q \end{equation}
where $m_{pq}$ is the number of flow lines from $p$ to $q$, counted with appropriate signs, and that the homology of the resulting complex is isomorphic to $H_*(M;\FF)$. The last statement is proved by 

(i) showing that if $p$ is a critical point on $\partial M$ of type N, then $f$ and $X$ can be modified in an arbitrarily small neighbourhood $U$ of $p$ to a Morse function $f_1$ with the same Morse complex but with no critical point in $U \cap \partial M$ of type N and instead one  critical point in $U \cap \partial M$ of type D (with the same index as $p$ as a critical point on $\partial M$)
together with one interior critical point in $U \setminus \partial M$ (with the same index as a critical point on the interior of $M$);

(ii) considering the case when there are no critical points of type N. In this case the Laudenbach complex for the homology of $M$ does not \lq see' the boundary at all, in the sense that every flow line appearing in the differential connects critical points in the interior. Then standard Morse theory arguments show that it can be assumed without loss of generality that $f$ is weakly self-indexing, in the sense that for critical points $p$ and $q$ we have $f(p) > f(q)$ if and only if the index of $p$ is strictly greater than the index of $q$; this follows as in \cite{Laudenbach} $\S$2.4 from

\begin{lemma} Let  $(f, X)$ be a Morse function and an adapted pseudo-gradient. Let $p$ and $q$ be two critical points with $f(p) > f(q)$. Assume that the open interval $((f(q),f(p))$ contains no critical value and that there are no connecting orbits from $p$ to $q$. Then there exists a path of
Morse functions $f_t$ for $t\in [0,1]$, with $f_0 = f$ and  $f_1(p) < f_1(q) = f(q)$, such that $X$ is a pseudo-gradient for every $f_t$.
\end{lemma}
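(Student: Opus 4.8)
This statement is the classical \lq rearrangement' lemma of Morse theory (compare \cite{Milnor}, and Laudenbach's $\S$2.4 itself), and the plan is to adapt its standard proof to the present setting of a Morse function on a manifold with boundary together with an adapted pseudo-gradient. What makes the adaptation routine is that the deformation will change only the \emph{values} of $f$, and not the vector field $X$, its zeros, its stable and unstable manifolds, or the local normal forms (c)--(d); so once the deformation is in hand, each $f_t$ is automatically again a Morse function in Laudenbach's sense with $X$ an adapted pseudo-gradient, and the real content is the construction of a \lq slice function' constant along the flow of $X$.

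First I would localise. With $c_- = f(q) < c_+ = f(p)$, fix $\varepsilon > 0$ so small that $f^{-1}([c_- - \varepsilon, c_+ + \varepsilon])$ contains no critical value other than $c_-$ and $c_+$, and set $W = f^{-1}([c_- - \varepsilon, c_+ + \varepsilon])$. Using the flow of $X$ I would form $K_p$, the closure in $W$ of the set of points whose backward $X$-orbit stays in $W$ until it converges to $p$ (morally $(\overline{W^u(p)} \cup \overline{W^s(p)}) \cap W$), and likewise $K_q$ using $q$ and forward orbits; these are compact and invariant under the portion of the flow inside $W$. The crux is that $K_p \cap K_q = \emptyset$: a common point would, by a compactness argument of the kind standard for gradient-like flows --- using that $f$ decreases strictly along non-constant trajectories and that there is no critical value strictly between $c_-$ and $c_+$, so that trajectories traverse the intermediate levels in finite time --- produce an honest trajectory of $X$ running from $p$ down to $q$, contradicting the assumption that there is no connecting orbit. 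I expect this step, upgrading \lq point in both closures' to \lq genuine connecting orbit', to be the main technical obstacle, though it is entirely routine.

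Granting the disjointness, I would construct a smooth $\mu : W \to [0,1]$ with $X \cdot \mu = 0$, with $\mu \equiv 1$ on a neighbourhood of $K_p$ and $\mu \equiv 0$ on a neighbourhood of $K_q$, and with $\mu$ locally constant near every zero of $X$ in $W$ and near every critical point of $f|_{\partial M}$ in $W$ (all of which lie on the two levels $c_\pm$): near $p$ then $\mu \equiv 1$, near $q$ then $\mu \equiv 0$, and near each remaining such point $\mu$ is some constant, the choices being mutually consistent precisely because $K_p$ and $K_q$ are disjoint and there is no critical value strictly between $c_\pm$. Such a $\mu$ is produced by the usual recipe for functions constant along a gradient-like flow (averaging a bump function over the flow lines, or building it on the \lq entry--exit' fibration of $W \setminus \mathrm{Zero}(X)$); near $\partial W$ no condition on $\mu$ beyond smoothness is needed, for the reason that appears next. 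Finally I would set $f_t(x) = G_t(f(x), \mu(x))$ for $x \in W$ and $f_t = f$ on $M \setminus W$, where $G_t : [c_- - \varepsilon, c_+ + \varepsilon] \times [0,1] \to [c_- - \varepsilon, c_+ + \varepsilon]$ is a smooth family with $\partial G_t / \partial y > 0$, $G_0(y,s) = y$, $G_t(y,s) = y$ whenever $y$ is near an endpoint or $s = 0$, and $G_1(c_+, 1) < c_-$ (possible since $c_- - \varepsilon < c_-$). Then $f_0 = f$; each $f_t$ agrees with $f$ near $\partial W$, hence is globally smooth; since $\mu$ is locally constant near the relevant critical points, $f_t$ there agrees with an orientation-preserving reparametrisation of $f$ (which one may take affine near the relevant value if literal normal forms are wanted), so $f_t$ is Morse in Laudenbach's sense with exactly the critical points, types and indices of $f$; and $X \cdot f_t = (\partial_y G_t)(f,\mu)\,(X \cdot f) + (\partial_s G_t)(f,\mu)\,(X \cdot \mu) = (\partial_y G_t)(f,\mu)\,(X \cdot f) < 0$ off $\mathrm{Zero}(X)$, so $X$ remains an adapted pseudo-gradient for every $f_t$. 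Finally $f_1(p) = G_1(c_+, 1) < c_- = G_1(c_-, 0) = f_1(q) = f(q)$, as required.
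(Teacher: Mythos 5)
The paper does not actually prove this lemma --- it is quoted verbatim from Laudenbach \cite{Laudenbach} $\S$2.4 --- so the relevant comparison is with the standard rearrangement argument, which is indeed what you reconstruct. Your overall strategy (keep $X$ fixed, build a flow-invariant slice function $\mu$, deform $f$ through $G_t(f,\mu)$, and check $X\cdot f_t=(\partial_yG_t)(X\cdot f)<0$) is the right one, and the disjointness step, the chain-rule verification, and the preservation of the Morse and normal-form conditions are handled essentially correctly.

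There is, however, a genuine gap where you assert that the constants assigned to $\mu$ near the \emph{other} zeros of $X$ at the levels $c_\pm$ are ``mutually consistent precisely because $K_p$ and $K_q$ are disjoint''. This is false. The hypotheses only exclude connecting orbits from $p$ to $q$; they permit further critical points $p''$ at level $c_+$ and $q'$ at level $c_-$ with connecting orbits $p\to q'$, $p''\to q'$ and $p''\to q$ (realizable already with two index-one points above two minima on a surface with boundary). In that configuration one still has $K_p\cap K_q=\emptyset$, yet no admissible $\mu$ exists: since $q'\in\overline{W^u(p)}\subseteq K_p$, the requirement $\mu\equiv 1$ near $K_p$ forces $\mu\equiv 1$ on a neighbourhood of $q'$; flow-invariance propagates the value $1$ up the orbit $p''\to q'$ to points arbitrarily close to $p''$; local constancy at $p''$ --- which you rightly impose, since $p''$ is a zero of $X$ and so must remain a nondegenerate critical point of every $f_t$ --- then gives $\mu\equiv 1$ near $p''$; and flow-invariance along $p''\to q$ finally yields $\mu=1$ arbitrarily close to $q$, contradicting $\mu\equiv 0$ near $K_q$. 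The true obstruction is thus not $K_p\cap K_q\neq\emptyset$ but a chain of connecting orbits $p\to q'\leftarrow p''\to q$, which the hypotheses do not rule out. The standard repair is a preliminary reduction: first push every other zero of $X$ at level $c_+$ slightly up and every other one at level $c_-$ slightly down, using perturbations supported in small neighbourhoods of those points alone (these need not be flow-invariant; one only needs $X\cdot f_s<0$ on the region where the cut-off varies, which holds for sufficiently small shifts, leaves $(c_-,c_+)$ free of critical values, and does not touch $f(q)$). After shrinking $\varepsilon$, the slab $W$ contains no zeros of $X$ other than $p$ and $q$, and your construction then goes through as written.
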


It is observed in \cite{Laudenbach} $\S$2.4 that, when $f$ is weakly self-indexing and there are no critical points of type N on the boundary, $M$ has the homotopy type of a CW-complex with a $k$-cell for each critical point of index $k$, and Laudenbach's   complex (\ref{Lcomplex}) is the corresponding cellular chain complex, whose homology is $H_*(M;\FF)$ by the cellular homology theorem.

Now let us return to the situation where $f:M \to \RR$ is any smooth function whose critical locus has finitely many components, on the compact Riemannian manifold $M$. We can apply Laudenbach's results to a perturbation of $f$ (or rather $-f$) restricted to suitably chosen Morse neighbourhoods.  As at Remark \ref{modify}, we can choose a system  of strict Morse neighbourhoods $\{ \mathcal{N}_{C,n}:C \in \mathcal{D}, n\geqslant 0\}$ such that $\partial  \mathcal{N}_{C,n} = \partial_+  \mathcal{N}_{C,n} \cup \partial_-  \mathcal{N}_{C,n}$ is smooth, $f|_{\partial  \mathcal{N}_{C,n}}$ is a Morse function and has no critical points on $\partial_+  \mathcal{N}_{C,n} \cap \partial_-  \mathcal{N}_{C,n}$ and $f|_{\partial_+ \mathcal{N}_{C,n}}$ (respectively $f|_{\partial_-  \mathcal{N}_{C,n}}$) achieves its minimum value (respectively its maximum value) precisely on $\partial_+  \mathcal{N}_{C,n} \cap \partial_-  \mathcal{N}_{C,n}$. Then we can perturb $f$ in the interior of $\mathcal{N}_{C,n}$ so that it is a Morse function on $\mathcal{N}_{C,n}$ in Laudenbach's sense, and moreover it is weakly self-indexing with every critical value strictly greater than the value of $f$ at every critical point of its restriction to
$\partial_-  \mathcal{N}_{C,n}$ and strictly less than the value of $f$ at every critical point of its restriction to
$\partial_+  \mathcal{N}_{C,n}$. The critical points of type N for $f$ (respectively type D for $-f$) on $\partial  \mathcal{N}_{C,n}$ are precisely those in $\partial_+  \mathcal{N}_{C,n}$ while the critical points of type D for $f$ (respectively type N for $-f$) are those in $\partial_-  \mathcal{N}_{C,n}$.

We will say that a pseudo-gradient vector field $X$ for $f$ on $\mathcal{N}_{C,n}$ is doubly adapted to the boundary if

a) $X.f < 0$ except at the critical points in the interior of $\mathcal{N}_{C,n}$ and the critical points for the restriction of $f$ to $\partial \mathcal{N}_{C,n}$;

b) $X$ points inwards along $\partial_-  \mathcal{N}_{C,n}$ except near the critical points  on $\partial_-  \mathcal{N}_{C,n}$ where it is tangent to the boundary, and $X$ points outwards along $\partial_+  \mathcal{N}_{C,n}$ except near the critical points  on $\partial_+  \mathcal{N}_{C,n}$ where it is tangent to the boundary;

c) if $p$ is a critical point in the interior of $\mathcal{N}_{C,n}$, then $X$ is hyperbolic at $p$ and the quadratic form $X^{\text{lin}}.d^2_p f$ (where $X^{\text{lin}}$ is the linear part of $X$ at $p$) is negative definite;

d) near any critical point $p$ on $\partial_-  \mathcal{N}_{C,n}$ (respectively on $\partial_+  \mathcal{N}_{C,n}$)
 there are coordinates $x = (y,z) \in \RR^{n-1} \times \RR$ on $M$ such that $M$ is given locally by $z \geqslant 0$ (respectively $z \leqslant 0$) and $f$ is given locally by $f(x) = f(p) + z + q(y)$ where $q$ is a nondegenerate quadratic form and $X$ is tangent to the boundary, vanishing and hyperbolic at $p$ and $X^{\text{lin}}.(q(y) + z^2)$ is negative %(respectively positive)
 definite;

e) $X$ is Morse--Smale.

By Laudenbach's argument,
there is a doubly adapted pseudo-gradient  on the Morse neighbourhood $\mathcal{N}_{C,n}$  and a  differential on the free graded $\ZZ$-module generated by $C_* \cup N_* \cup D_{*}$, which is given for $p \in C_k \cup N_k \cup D_{k}$
 by
\begin{equation} \label{Lcomplex2}  \partial p = \sum_{q \in C_{k-1} \cup N_{k-1} \cup D_{k-1} } m_{pq} q \end{equation}
where $m_{pq}$ is the number of flow lines from $p$ to $q$, counted with appropriate signs, and that the homology of the resulting complex is isomorphic to $H_*(\mathcal{N}_{C,n}, \partial_- \mathcal{N}_{C,n};\FF)$. 

Now instead of restricting the smooth map $f:M \to \RR$ to the Morse neighbourhoods $\{ \mathcal{N}_{C,\mathbb{j}(C)}: C \in \mathcal{D}\}$ for some $\mathbb{j}: \mathcal{D} \to \NN$, let us consider $f$ on $M$ itself, but perturb it as above in the interior of each $ \mathcal{N}_{C,\mathbb{j}(C)}$ to become a Morse function.

\begin{figure}[t]
\includegraphics[width = 0.4\linewidth]{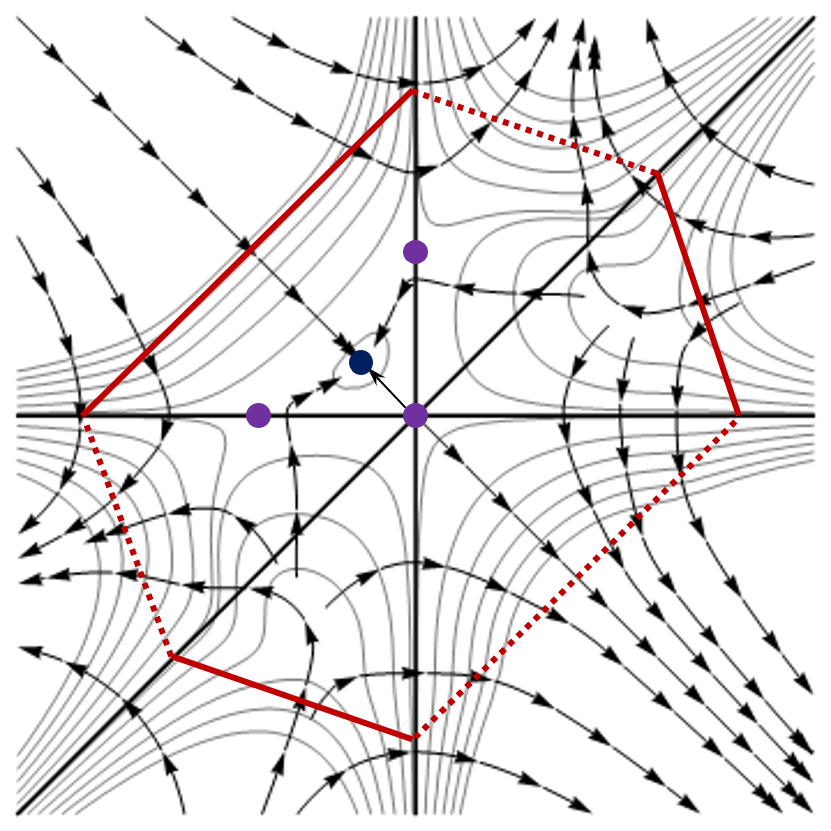}
\centering
\caption{A Morse-Smale perturbation of $f(x,y) = x y\, (x-y)$ near 0.}
\label{fig:perturb}
\end{figure}

Using a pseudo-gradient doubly adapted to the boundaries of the Morse neighbourhoods and given by the gradient flow of the perturbed function elsewhere, we obtain a differential determined by flow lines on the graded vector space with basis given (in the obvious modification of the notation above) by
$$\bigcup_{C \in \mathcal{D}} C^{[C]}_*  \cup N^{[C]}_* \cup D^{[C]}_{*}  ,$$
with homology $H_*(M;\FF)$ and the structure of a multicomplex supported on a quiver $\tilde{\Gamma}$. Here the choice of pseudo-gradient allows (broken) flow lines to cross $\partial \mathcal{N}_{C,\mathbb{j}(C)}$ only at critical points for the restriction of $f$ to $\partial \mathcal{N}_{C,\mathbb{j}(C)}$.

The quiver $\tilde{\Gamma}$ associated to the perturbed function $\tilde{f}:M \to \RR$  is a perturbation $\tilde{\Gamma} \to \Gamma$ of the quiver $\Gamma$ associated as at Definition \ref{defnquiver} to the original smooth function $f:M \to \RR$, in the sense that each vertex of $\Gamma$ has split into finitely many vertices of $\tilde{\Gamma}$, and each path in $\Gamma$ has split into finitely many paths in $\tilde{\Gamma}$ in a compatible way. Indeed we can factor $\tilde{\Gamma} \to \Gamma$ as $\tilde{\Gamma} \to \Gamma' \to \Gamma$ where the quiver $\Gamma'$ is defined as follows (cf. \cite{Bloom}).

\begin{defn} \label{defnquiverrefined}
The refined quiver $\Gamma'$ associated to the smooth function $f:M \to \RR$ on the Riemannian manifold $M$ has  vertices  of three types: a vertex $v_C$ for each connected component $C$ of the critical locus $\text{Crit}(f)$ of $f$, a vertex $u_{G,C}$ for every connected component $G$ of the Morse stratum $W^+_C$ for $C$ (as defined at Definition \ref{defnwc}) and a vertex $w_{H,C}$ for every connected component $H$ of the Morse stratum $W^-_C$ for $C$. The arrows are also of three types: 

(i) there is  an arrow from $u_{G,C}$ to $v_C$ for every $C \in \mathcal{D}$ and  every connected component $G$ of the Morse stratum $W^+_C$ for $C$,

(ii)   there is  an arrow from $v_C$ to $w_{H,C}$  for every $C \in \mathcal{D}$ and  every connected component $H$ of the Morse stratum $W^-_C$ for $C$, and

(iii)  there is  an arrow from $w_{H,C_+}$ to $u_{G,C_-}$ for every $C_+$ and $C_-$  in $\mathcal{D}$ such that there is a connected component of 
$$\{ x \in M : \{ \psi_{t}(x): t \leqslant 0\} \mbox{  has a limit point in $C_+$ and } \{ \psi_{t}(x): t \geqslant 0\} \mbox{  has a limit point in $C_-$} \}$$
that is closed in $f^{-1}(f(C_-),f(C_+))$ and is contained in $G \cap H$.
\end{defn}

The quivers $\Gamma$ and $\Gamma'$ only depend on  the smooth function $f:M \to \RR$ on the Riemannian manifold $M$, but the quiver $\tilde{\Gamma}$ depends on the perturbation of the function $f$ and the gradient vector field.
Similarly the multicomplex supported on $\tilde{\Gamma}$ with underlying vector space $\FF \langle \bigcup_{C \in \mathcal{D}} C^{[C]}_* \cup N^{[C]}_* \cup \tilde{D}^{[C]}_*
\rangle$ (bigraded by critical value and index/homological degree) depends on these perturbations. However as in Remark \ref{splitquiver} we can relate the spectral sequences associated to the Morse levelling functions (see Example \ref{examplelevelling}) for $f:M\to \RR$ and the Morse--Smale perturbation by a suitable quasi-isomorphism respecting the map from $\tilde{\Gamma}$ to $\Gamma$, and combine this with studying the effect on the spectral sequence of modifying the levelling functions in each case.

\section{Examples} \label{sec:examples}

Let us consider some examples of smooth functions $f:M\to \RR$ with finitely many components of $\mathrm{Crit}(f)$ on compact surfaces equipped with Riemannian metrics. Recall that the Morse inequalities in Theorem \ref{mainthm} depend only on the function $f$ and not on the Riemannian metric on $M$, whereas, as we shall see, the quiver $\Gamma$ (see Definition \ref{6.1below}) and  refined quiver $\Gamma'$ (see Definition \ref{defnquiverrefined}) associated to $f:M \to \RR$ may depend on the Riemannian structure.
In addition  the spectral sequences abutting  to the homology $H_*(M;\FF)$ of $M$
whose $E_1$ pages are level 1 graded multicomplexes supported on $\Gamma$ (in the sense of Definition \ref{defngamma}) depend on the Riemannian structure and the function as well as on the choice of a levelling function (see Definition \ref{levelling}) such as the one provided by the Morse function itself.

 First let us consider the height function $f:M \to \RR$ given by projection to the $z$-axis on a torus $M$ embedded in $\RR^3$ in the standard way, as the rotation about the $y$-axis of a circle in the $(x,y)$-plane. This is a Morse function with a maximum, a minimum and two saddle points. If we use the standard Riemannian metric on $\RR^3$ then the corresponding quiver $\Gamma_3$ is as pictured in Figure \ref{fig:standardtorus}. For a generic perturbation $g$ of this metric the pair $(f,g)$ is Morse--Smale and the quiver is given by $\Gamma_1$, while $\Gamma_2$ is the quiver associated to the intermediate case: a perturbation of the standard Riemannian metric which is not generic.

\begin{figure}[t]
\includegraphics[width = 0.6\linewidth]{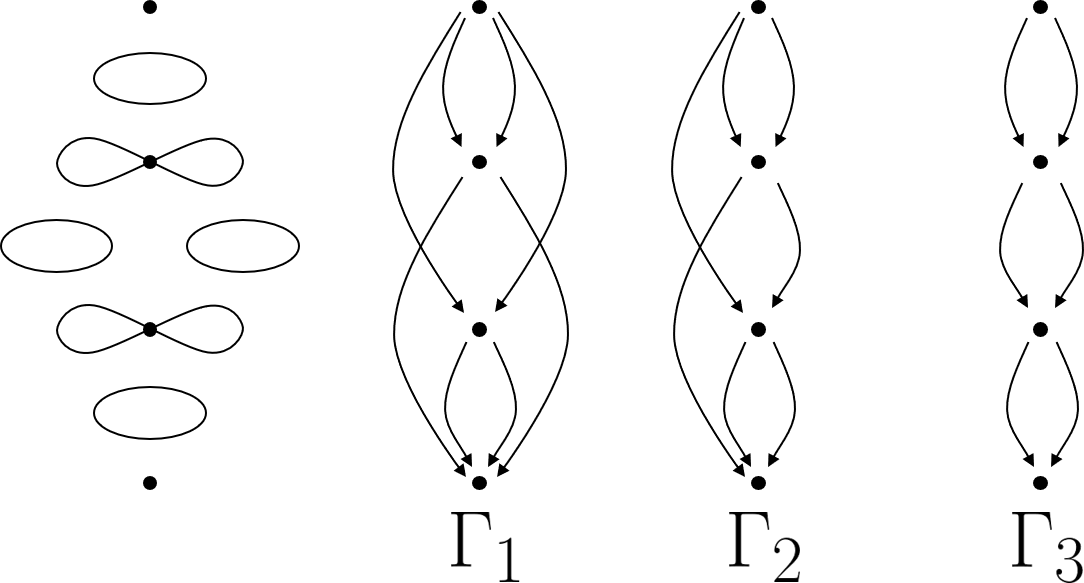}
\centering
\caption{Level sets for the height function on a standard torus in $\RR^3$ and associated quivers using three different Riemannian metrics}
\label{fig:standardtorus}
\end{figure}

The corresponding refined quivers $\Gamma'_1, \Gamma'_2, \Gamma'_3$, defined as at Definition \ref{defnquiverrefined}, are pictured in Figure \ref{fig:standardrefined}.

Note that the local behaviour of $f$ near its two saddle points ensures that their Morse neighbourhoods are topologically squares and that the quiver has two incoming and two outgoing arrows at each saddle point. Moreover if there exist flow lines between critical points with successive critical values $\alpha$ and $\beta$ then they must form a closed subset of $f^{-1}(\alpha,\beta)$, and for each critical point there must exist paths in the quiver to a local minimum and  from a local maximum. Thus the quivers $\Gamma_1, \Gamma_2, \Gamma_3$ and their refinements $\Gamma'_1, \Gamma'_2, \Gamma'_3$ are the only possibilities for this function. In each case, and for each choice of levelling function $\phi:\Gamma \times \ZZ \to \RR$ satisfying the conditions of Definition \ref{levelling}, we have
$$H_i(\mathcal{N}_{c_j,n}, \partial_\pm \mathcal{N}_{c_j,n};\FF) \cong \FF$$
if $(i,j) = (0,1)$ or $(1,2)$ or $(1,3)$ or $(2,4)$ and is zero otherwise, and thus the differential in the $E_1$ page of the associated spectral sequence is zero: this is a perfect Morse function in the sense that the Morse inequalities are actually equalities.

\begin{figure}[t]
\includegraphics[width = 0.4\linewidth]{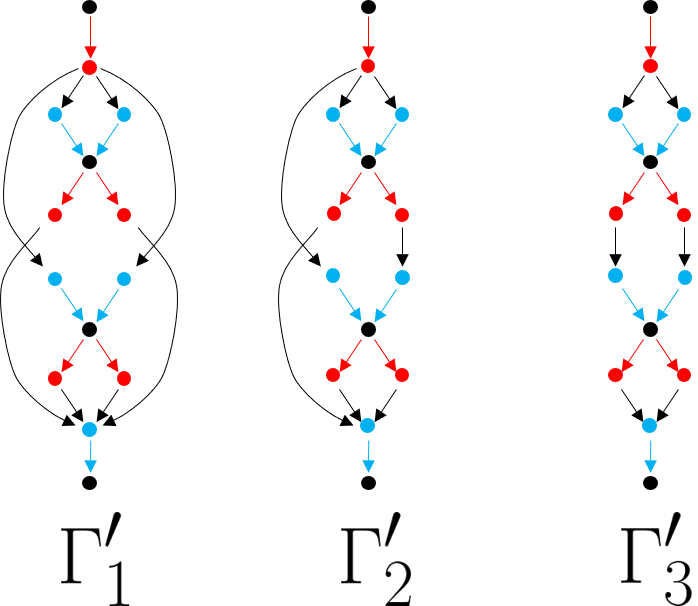}
\centering
\caption{Refined quivers for the height function on a standard torus in $\RR^3$ using three different Riemannian metrics}
\label{fig:standardrefined}
\end{figure}

\medskip

 We can also consider the height function given by projection to the $z$-axis of a circle in the $(y,z)$-plane. This is a Morse--Bott function $f:M \to \RR$ whose critical locus $\mathrm{Crit}(f)$ has two connected components, $C_{\min}$ and $C_{\max}$, each a circle, with $f$ attaining its maximum value on $C_{\max}$ and its minimum value on $C_{\min}$. For any choice of Riemannian metric their Morse neighbourhoods are annuli and 
 $$ P_t(\mathcal{N}_{C_{\max},n}, \partial_\pm \mathcal{N}_{C_{\max},n}) = tP_t(S^1) = t + t^2$$
 while
 $$ P_t(\mathcal{N}_{C_{\min},n}, \partial_\pm \mathcal{N}_{C_{\min},n}) =  P_t(\mathcal{N}_{C_{\min},n}) = P_t(S^1) = 1+t.$$
 The subset
$$\{ x \in M : \{ \psi_{t}(x): t \leqslant 0\} \mbox{  has a limit point in $C_{\max}$ and } \{ \psi_{t}(x): t \geqslant 0\} \mbox{  has a limit point in $C_{\min}$} \}$$
 is closed in $f^{-1}(f(C_{\min}),f(C_{\max}))$ (indeed it is the whole of this set) and has two connected components, so the quiver $\Gamma$ and refined quiver $\Gamma'$ are as pictured in Figure \ref{fig:secondtorus}.

 Any choice of levelling function $\phi:\Gamma \times \ZZ \to \RR$ gives the same spectral sequence as the one defined by the critical values, which has zero differential on the $E_1$ page: this is another perfect Morse function. 
 
 \medskip

 Our next example is the height function on the torus $M$ embedded in $\RR^3$ as in Figure \ref{fig:funnytorus}. The critical points $c_1, c_2, c_3, c_4, c_5$ of this smooth function $f:M\to \RR$ are all isolated, with $f(c_1) < f(c_2) < f(c_3) < f(c_4) < f(c_5)$. There are four nondegenerate critical points $c_1$ (where $f$ takes its minimum value), $c_3$ (a local maximum), $c_4$ (a saddle point) and $c_5$ (the global maximum), as well as one degenerate isolated critical point $c_2$ where the function locally looks like $f(x,y) = xy(x-y)$. 

\begin{figure}[t]
\includegraphics[width = 0.2\linewidth]{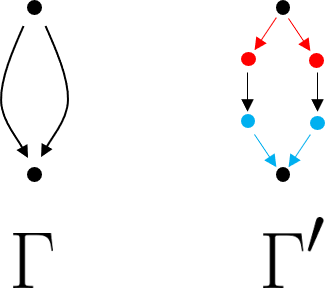}
\centering
\caption{Quiver and refined quiver for the height function on a torus in $\RR^3$ with critical locus two circles}
\label{fig:secondtorus}
\end{figure}
 
 A system of strict Morse neighbourhoods for $f$ near $c_2$ is given in Figure \ref{fig:strict}; each $\mathcal{N}_{c_2,n}$ is homeomorphic to a hexagon, with $\partial_+ \mathcal{N}_{c_2,n}$ and $\partial_- \mathcal{N}_{c_2,n}$ each having three connected components homeomorphic to the interval $[0,1]$, so that 
 $$H_1(\mathcal{N}_{c_2,n},\partial_\pm \mathcal{N}_{c_2,n};\FF) \cong \FF^2$$
 and $ H_i(\mathcal{N}_{c_2,n}, \partial_\pm \mathcal{N}_{c_2,n};\FF) = 0$ if $i \neq 1$. 
 
 Similarly we can choose Morse neighbourhoods $\mathcal{N}_{c_4,n}$ for $f$ near the saddle point $c_4$ such that each $\mathcal{N}_{c_4,n}$ is homeomorphic to a square, with $\partial_+ \mathcal{N}_{c_4,n}$ and $\partial_- \mathcal{N}_{c_4,n}$ each having two connected components homeomorphic to the interval $[0,1]$, so that 
 $$H_1(\mathcal{N}_{c_4,n},\partial_\pm \mathcal{N}_{c_4,n};\FF) \cong \FF$$
 and $ H_i(\mathcal{N}_{c_4,n}, \partial_\pm \mathcal{N}_{c_4,n};\FF) = 0$ if $i \neq 1$. The local maxima $c_3$ and $c_5$ (respectively minimum $c_1$) have Morse neighbourhoods $\mathcal{N}_{c_j,n}$ each homeomorphic to a disc, with $\partial_+ \mathcal{N}_{c_j,n}$ (respectively $\partial_- \mathcal{N}_{c_j,n}$) empty and $\partial_- \mathcal{N}_{c_j,n}$ (respectively $\partial_+ \mathcal{N}_{c_j,n}$) homeomorphic to a circle $S^1$, so that 
 $$ H_i( \mathcal{N}_{c_j,n}, \partial_- \mathcal{N}_{c_j,n};\FF) \cong \FF$$
 when $i=2$ (respectively $i=0$) and $ H_i( \mathcal{N}_{c_j,n}, \partial_- \mathcal{N}_{c_j,n};\FF)  = 0$ otherwise. Thus
 $$ \sum_{j=1}^5 P_t(  \mathcal{N}_{c_j,\infty}, \partial_- \mathcal{N}_{c_j,\infty}) = 1 + 2t + t^2 + t + t^2 = P_t(M) + (1+t)t$$
 while 
 $$ \sum_{j=1}^5 P_t(  \mathcal{N}_{c_j,\infty}, \partial_+ \mathcal{N}_{c_j,\infty}) = t^2 + 2t + 1 + t + 1 = P_t(M) + (1+t),$$
 verifying the descending and ascending Morse inequalities (Theorem \ref{mainthm}) in this example. Let us consider the descending case in more detail.
 
 The level sets, quiver $\Gamma_1$ and refined quiver $\Gamma_1'$ associated to $f:M\to \RR$ and a generic Riemannian metric on $M$ are illustrated in Figure \ref{fig:funnytorusquiver}. Recall that the quiver has vertices $\{v_{c_j}:j = 1,2 ,3,4,5\}$ labelled by the critical points $c_1, c_2, c_3, c_4, c_5$ and arrows from $v_{c_j}$ to $v_{c_k}$ for each connected component of 
$$\{ x \in M : \{ \psi_{t}(x): t \leqslant 0\} \mbox{  has a limit point  $c_j$ and } \{ \psi_{t}(x): t \geqslant 0\} \mbox{  has a limit point $c_k$} \}$$
whenever this subset is closed in $f^{-1}(f(c_k)),f(c_j))$. 

\begin{figure}[t]
\includegraphics[width = 0.5\linewidth]{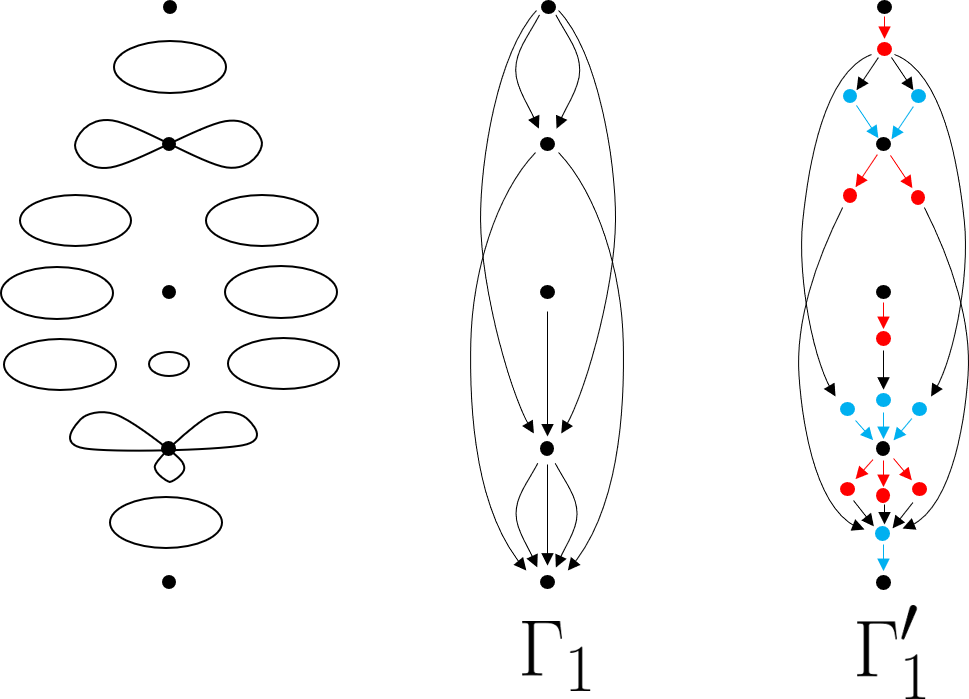}
\centering
\caption{Level sets, quiver and refined quiver for the height function on the torus in Figure \ref{fig:funnytorus} with generic Riemannian metric}
\label{fig:funnytorusquiver}
\end{figure}

As for our first example, the height function on a standard torus in $\RR^3$, there are two other possible quivers $\Gamma_2$ and $\Gamma_3$ for this function $f:M \to \RR$ arising from different choices of the Riemannian metric $g$; these are illustrated in Figure \ref{funnytorusquivers}.

The quiver $\Gamma_1^2$ of paths of length 2 in $\Gamma_1$ has the same vertices as $\Gamma_1$ and has ten arrows from $v_{c_5}$ to $v_{c_1}$ and three arrows from $v_{c_4}$ to $v_{c_1}$; if $r>2$ then $\Gamma_1^r$ has no arrows. 
The quiver $\Gamma_2^2$ of paths of length 2 in $\Gamma_2$ has the same vertices as $\Gamma_2$ (or equivalently the same vertices as $\Gamma_1$) and has five 
arrows from $v_{c_5}$ to $v_{c_1}$, two arrows from $v_{c_5}$ to $v_{c_2}$, three arrows from $v_{c_4}$ to $v_{c_1}$ and three arrows from $v_{c_3}$ to $v_{c_1}$. The quiver $\Gamma_2^3$ of paths of length 3 in $\Gamma_2$ has six arrows from $v_{c_5}$ to $v_{c_1}$ and no other arrows; if $r>3$ then $\Gamma_2^r$ has no arrows. 
The quiver $\Gamma_3^2$ of paths of length 2 in $\Gamma_3$ has four arrows from $v_{c_5}$ to $v_{c_2}$, six arrows from $v_{c_4}$ to $v_{c_1}$ and three arrows from $v_{c_3}$ to $v_{c_1}$, while the quiver $\Gamma_3^3$ of paths of length 3 in $\Gamma_3$ has twelve arrows from $v_{c_5}$ to $v_{c_1}$ and no other arrows; if $r>3$ then $\Gamma_3^r$ has no arrows. 

We saw at Lemma \ref{lemdec} that $ M = \bigsqcup_{C \in \mathcal{D}} W^+_C = \bigsqcup_{C \in \mathcal{D}} W^-_C$ where $\mathcal{D} = \{c_1,c_2,c_3,c_4,c_5\}$ and 
$$ W^+_{c_j} = \{ x \in M : \mbox{ for every $n \geqslant 0$ the downwards gradient flow for $f$ from $x$ enters } $$
$$ \mbox{ and never leaves the Morse neighbourhood } \mathcal{N}_{c_j,n} \} $$
and
$$ W^-_{c_j} = \{ x \in M : \mbox{ for every $n \geqslant 0$ the upwards gradient flow for $f$ from $x$ enters } $$
$$ \mbox{
and never leaves the Morse neighbourhood } \mathcal{N}_{c_j,n} \} $$
as in Definition  
\ref{defnwc}. For a generic choice of Riemannian metric, if there is an arrow in $\Gamma_1$ from $v_{c_j}$ to $v_{c_k}$ then $W^-_{c_j} \cap W^+_{c_k}$ is homeomorphic to a disjoint union of open intervals, one for each such arrow. If $j=k$ then $W^-_{c_j} \cap W^+_{c_k} = \{c_j\}$.
 Otherwise $W^-_{c_j} \cap W^+_{c_k}$ is empty, except for the intersections $W^-_{c_3} \cap W^+_{c_1}$  and $W^-_{c_5} \cap W^+_{c_1}$ which are open subsets of $M$, the former homeomorphic to a disc and the latter having two connected components each homeomorphic to a disc. We obtain Morse covers $\{\mathcal{W}^+\mathcal{N}^\circ_{c_j,n_j}:j=1,\ldots,5\}$ and $\{\mathcal{W}^-\mathcal{N}^\circ_{c_j,n_j}:j=1,\ldots,5\}$ of $M$ in the sense of Definition \ref{Ms} from the Morse stratifications $\{W^+_{c_j}:j=1,\ldots,5\}$ and $\{W^-_{c_j}:j=1,\ldots,5\}$ by taking open neighbourhoods of these strata by using the interiors of Morse neighbourhoods of the critical points combined with the gradient flow. 
 As in Remark \ref{decomp}, we can choose the Morse neighbourhoods such that the submanifolds with corners $\mathcal{W}^+\mathcal{N}^\circ_{c_j,n_j}$ meet along submanifolds (with corners) of their boundaries, so that $M$ has a triangulation which is compatible with these submanifolds and their intersections. 
\begin{figure}[t]
\includegraphics[width = 0.26\linewidth]{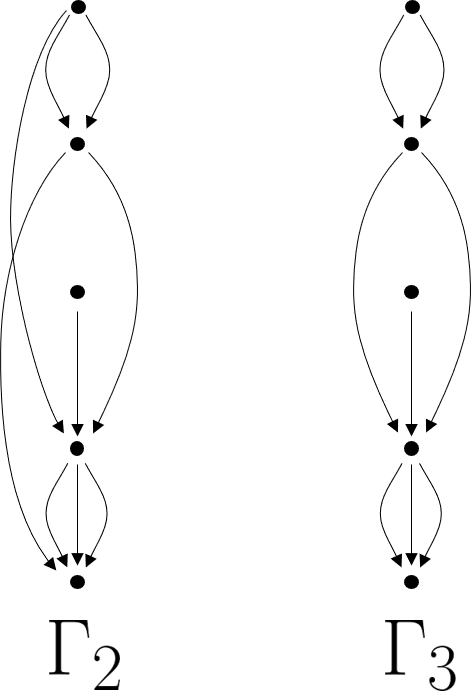}
\centering
\caption{Quivers $\Gamma_2$ and $\Gamma_3$ for the height function on the torus in Figure \ref{fig:funnytorus} with non-generic Riemannian metrics}
\label{fig:funnytorusquivers}
\end{figure}
 
 Recall from $\S$5 that, given any levelling function $\phi:\Gamma_1 \times \ZZ \to \RR$ in the sense of Definition \ref{levelling} (for example the Morse levelling function), we can associate to $f:M \to \RR$
  a spectral sequences abutting  to the homology $H_*(M;\FF)$ of $M$
whose $E_1$ page is given by a differential on
$$ \bigoplus_{v_{c_j} \in \Gamma_1, n \in \ZZ}  H_n(\mathcal{N}_{c_j,n_j},\partial_-\mathcal{N}_{c_j,nj};\FF) $$
which is the sum over all the arrows $a$ in $\Gamma_1$ of maps
$$ \partial^a_i: H_i( \mathcal{N}_{c_k,n_k},\partial_-\mathcal{N}_{c_k,n_k};\FF) \to 
H_{i-1}( \mathcal{N}_{c_\ell,n_\ell},\partial_-\mathcal{N}_{c_\ell,n_\ell};\FF)$$
where $t(a) = v_{c_k}$ and $h(a) = v_{c_\ell}$, defined as follows. If $[\xi] \in H_i( \mathcal{N}_{c_k,n_k},\partial_-\mathcal{N}_{c_k,n_k};\FF)$ is represented by a chain $\xi$ in $\mathcal{N}_{c_k,n_k}$ (for suitable $n_k >\!> 1$) with boundary in $\partial_-\mathcal{N}_{c_k,n_k}$, then the pairing of 
$$ \partial^a_i[\xi] \in H_{i-1}( \mathcal{N}_{c_\ell,n_\ell},\partial_-\mathcal{N}_{c_\ell,n_\ell};\FF) \cong H_{\dim(M) - i+1}( \mathcal{N}_{c_\ell,n_\ell},\partial_+\mathcal{N}_{c_\ell,n_\ell};\FF)^*$$
 with a $(\dim M - i + 1)$-chain $\eta$ in $\mathcal{N}_{c_\ell,n_\ell}$ with boundary in $\partial_+\mathcal{N}_{c_\ell,n_\ell}$ is given by flowing $\partial \eta$ upwards and $\partial \xi$ downwards in a small neighbourhood of the connected component of $W^-_{c_k} \cap W^+_{c_\ell}$ which defines the arrow a and intersecting them in any level set of $f$ strictly between $f(c_\ell)$ and $f(c_k)$.
 
 Such a map $ \partial^a_i$ can only be nonzero when $H_i( \mathcal{N}_{c_k,n_k},\partial_-\mathcal{N}_{c_k,n_k};\FF)$ and $ 
H_{i-1}( \mathcal{N}_{c_\ell,n_\ell},\partial_-\mathcal{N}_{c_\ell,n_\ell};\FF)$ are both nonzero. We have seen that $H_i( \mathcal{N}_{c_k,n_k},\partial_-\mathcal{N}_{c_k,n_k};\FF)$  is only nonzero if $i=0$ and $j=1$, or if $i=1$ and $j \in \{2,4\}$, or if $i=2$ and $j\in \{ 3,5\}$. Looking at the quiver $\Gamma_1$, we see that the sum of the maps $\partial_i^a$ over all the arrows $a$ from a given vertex can only be nonzero when it is 
$$\theta_{3,2}: \FF \cong H_2( \mathcal{N}_{c_3,n_3},\partial_-\mathcal{N}_{c_3,n_3};\FF) \to 
H_{1}( \mathcal{N}_{c_2,n_2},\partial_-\mathcal{N}_{c_2,n_2};\FF)  \cong \FF^2$$
or
$\theta_{5,24} = \theta_{5,2} + \theta_{5,4}$ from $ \FF \cong H_2( \mathcal{N}_{c_5,n_5},\partial_-\mathcal{N}_{c_5,n_5};\FF) $ to $$H_{1}( \mathcal{N}_{c_2,n_2},\partial_-\mathcal{N}_{c_2,n_2};\FF) \oplus 
H_{1}( \mathcal{N}_{c_4,n_4},\partial_-\mathcal{N}_{c_4,n_4};\FF) \cong \FF \oplus \FF$$
or 
$ \theta_{2,1}: \FF^2 \cong H_{1}( \mathcal{N}_{c_2,n_2},\partial_-\mathcal{N}_{c_2,n_2};\FF) \to
H_{0}( \mathcal{N}_{c_1,n_1},\partial_-\mathcal{N}_{c_1,n_1};\FF) \cong \FF.$
In this situation, just as in the Morse--Smale case, there is  a levelling function $\phi_1$ such that $\phi_1(v,q) = q$, and then the differential on the $E_1$ page of the associated spectral sequence is $\theta_{3,2} + \theta_{5,24} + \theta_{2,1}$ where $\theta_{3,2}$ is injective, while $\theta_{2,1} = 0$. To understand $\theta_{5,24}$, consider the generator of $H_2( \mathcal{N}_{c_5,n_5},\partial_-\mathcal{N}_{c_5,n_5};\FF)$  whose boundary is represented by the circle $\partial_-\mathcal{N}_{c_5,n_5}$. This decomposes according to its intersections with $\mathcal{W}^+\mathcal{N}^\circ_{c_j,n_j}$ for $j=1,2,4$ 
 into eight intervals meeting at their endpoints in cyclical order $12141214$. The intersections with $\mathcal{W}^+\mathcal{N}^\circ_{c_2,n_2}$ and  $\mathcal{W}^+\mathcal{N}^\circ_{c_4,n_4}$ flow downwards without meeting each other until they meet $\partial^+\mathcal{N}^\circ_{c_2,n_2}$ and 
 $\partial^+ \mathcal{N}^\circ_{c_4,n_4}$, and by considering the local pictures of the gradient flow of $f$ near $c_2$ (see Figure \ref{fig:strict}) and near the saddle point $c_4$, we find that $\theta_{5,42}$ maps this generator of  
$H_2( \mathcal{N}_{c_5,n_5},\partial_-\mathcal{N}_{c_5,n_5};\FF)$ 
to $(0, - \theta_{3,2}([\zeta])$ where $[\zeta]$ is a similar generator of $H_2( \mathcal{N}_{c_3,n_3},\partial_-\mathcal{N}_{c_3,n_3};\FF)$.
At the second page of the spectral sequence we have a map $\theta_{5,1}$ associated to the paths of length two in $\Gamma_1$ which flow from $c_5$ to $c_1$, but these are all zero on the grounds of degree, so the spectral sequence degenerates at this page. 

If on the other hand we choose the Morse levelling function $\phi_2(v_{c_j},q) = f(c_j)$ (as we can always do), then the differential on the first page of the spectral sequence is $\theta_{3,2} + \theta_{5,4} + \theta_{2,1}$ where $\theta_{3,2}$ as before is injective and $\theta_{5,4}$ and $ \theta_{2,1}$ are both zero. Again the spectral sequence degenerates after the first page.

Notice that we can modify the smooth function $f:M \to \RR$ without changing the quiver $\Gamma_1$ so that the critical value $f(c_3)$ is no longer less than $f(c_4)$; indeed it can be made greater than $f(c_5)$. This provides different choices of levelling functions for which the differential on the first page of the spectral sequence is zero, so that in these cases the spectral sequence does not degenerate after the first page.

\medskip

To construct an example without isolated critical points, we can consider the following modification of the height function $f:M\to \RR$ illustrated in Figure \ref{fig:funnytorus}. First replace each of the local maxima $c_3$ and $c_5$ with a local maximum attained on a small circle and a local minimum inside the circle. Next remove disc neighbourhoods of the two new local minima and (smoothly) glue in a cylinder on which the height function has one local minimum and one saddle point, to create a surface $\hat{M}$ of genus 2 with a smooth function $\hat{f}:\hat{M} \to \RR$ which is Morse--Bott away from the critical point $c_2$.  The connected components of $\mathrm{Crit}(\hat{f})$ are:

(i) the critical points $c_1,c_2,c_4$ of $f:M\to \RR$;

(ii) two circles $C_3$ and $C_5$ which are Morse--Bott local maxima replacing the critical points $c_3$ and $c_5$ of $f:M\to \RR$;

(iii) an additional saddle point $c_6$ and local minimum $c_7$ in the cylinder joining $C_3$ and $C_5$.

Morse neighbourhoods of $c_1,c_2,c_4$ are as before, and $c_6$ and $c_7$ have Morse neighbourhoods just like $c_4$ and $c_1$. Morse neighbourhoods of the circles $C_3$ and $C_5$ are given by annuli $\mathcal{N}_{C_j,n}$ for $j=3,5$  with $\partial_+ \mathcal{N}_{C_j,n}$ empty and $\partial_- \mathcal{N}_{C_j,n}$ the disjoint union of two circles, so $H_i(\mathcal{N}_{C_j,n}, \partial_- \mathcal{N}_{C_j,n}; \FF)$ is isomorphic to $\FF$ if $i=1,2$ and is zero otherwise. Thus
$$P_t(\mathcal{N}_{c_1,\infty},\partial_- \mathcal{N}_{c_1,\infty})  + P_t(\mathcal{N}_{c_2,\infty},\partial_- \mathcal{N}_{c_2,\infty})  + P_t(\mathcal{N}_{C_3,\infty},\partial_- \mathcal{N}_{C_3,\infty}) + P_t(\mathcal{N}_{c_4,\infty},\partial_- \mathcal{N}_{c_4,\infty}) $$   $$
+ P_t(\mathcal{N}_{C_5,\infty},\partial_- \mathcal{N}_{C_5,\infty}) + P_t(\mathcal{N}_{c_6,\infty},\partial_- \mathcal{N}_{c_6,\infty}) + P_t(\mathcal{N}_{c_7,\infty},\partial_- \mathcal{N}_{c_7,\infty}) 
$$  $$= 1 + 2t + (t + t^2) + t + (t + t^2) + t + 1 = P_t(\hat{M}) + (1+t)^2$$
verifying Theorem \ref{mainthm} in this case. The quiver $\Gamma$ %and refined quiver $\Gamma'$ 
associated to $\hat{f}: \hat{M} \to \RR$ with a generic Riemannian metric on $\hat{M}$ is illustrated in Figure \ref{fig:genus2quiver}; the analysis of the spectral sequences is similar to the previous example
of the height function $f:M\to \RR$ on a torus $M$ embedded in $\RR^3$ as illustrated in Figure \ref{fig:funnytorus}.

\begin{figure}[t]
\includegraphics[width = 0.1\linewidth]{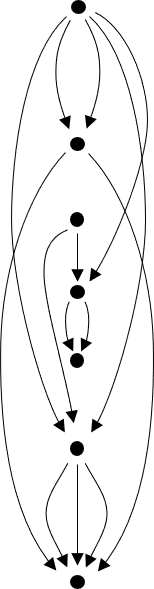}
\centering
\caption{Quiver for $\hat{f}:\hat{M} \to \RR$  with generic Riemannian metric}
\label{fig:genus2quiver}
\end{figure}

 \medskip

Recall that a smooth function $f \colon M \to \RR$ is a Morse--Bott function if the connected components  $C \in \mathcal{D}$ of its critical set $\text{Crit}(f)$ are submanifolds of $M$ and the Hessian of $f$  at any point of $C$ is nondegenerate in directions normal to $C$. In this situation $M = \bigcup_{C \in \mathcal{D}} S_C$ where the Morse strata $S_C$ are given by
\begin{equation} \label{eqnstrata} S_C = \{ x \in M: \mbox{ the downwards gradient flow from $x$ has a limit point in $C$} \} \end{equation}
and 
Morse neighbourhoods $\mathcal{N}_{C,n}$ can be chosen as disc sub-bundles in the normal bundles to the Morse strata $S_C$ over neighbourhoods of $C$ in $S_C$ (with $\partial_- \mathcal{N}_{C,n}$ corresponding to the boundaries of the discs). Thus 
$$H_i(\mathcal{N}_{C,n}, \partial_- \mathcal{N}_{C,n};\RR) \cong H_{i - \text{codim}_\RR (S_C)}(C;\RR)
$$ giving us the classical Morse inequalities (\ref{Morseineq}) as an immediate consequence of Theorem \ref{mainthm}. Morse--Bott homology has also been studiedin ways which are related to but not the same as our viewpoint  \cite{BH,Frau, Hurtubise, Hurtubise2,ZZ} .

An early example of Morse theory applied to a smooth function which is not Morse--Bott was the normsquare of a moment map $\mu:M\ \to \lieks$ for a Hamiltonian action of a compact Lie group $K$ on a compact symplectic manifold $M$ \cite{K}. For any choice of invariant inner product on the Lie algebra $\liek$ of $K$ the normsquare $f = |\!|\mu|\!|^2$  of the moment map is a $K$-equivariantly perfect  minimally degenerate Morse function. 
 More precisely,  the set of critical points for $f=||\mu||^2$ is a finite disjoint union of closed subsets $C \in \mathcal{D}$ along each of which $f$ is {minimally degenerate} in the following sense.

\begin{defn}  \label{mindeg} A locally closed submanifold $\Sigma_C$ containing $C$ with orientable normal bundle in $M$
 is a \emph{minimising (respectively maximising) submanifold} for $f$  along $C$ if
  
  \begin{enumerate} \item  the restriction of $f$ to $\Sigma_C$ achieves its minimum (respectively maximum) value exactly on $C$, and
  
   \item the tangent space to $\Sigma_C$ at any point $x \in C$ is maximal among subspaces of $T_x M$ on which the Hessian $H_x(f)$ is non-negative (respectively non-positive). 
   
    \end{enumerate} 
    If a minimising (respectively maximising) submanifold $\Sigma_C$ exists, then $f$ is called minimally (respectively maximally) degenerate along $C$. The smooth function $f$ is minimally (respectively maximally) degenerate if it is minimally (respectively maximally) degenerate along every $C \in \mathcal{D}$. It is extremally degenerate if for each $C \in \mathcal{D}$ it is either minimally or maximally degenerate along $C$.
\end{defn}
    
    In \cite{K} it was shown that if $f$ is minimally degenerate then it induces a smooth stratification $\{S_C: C \in \mathcal{D}\}$ of  $M$ where $S_C$ is the Morse stratum defined as at (\ref{eqnstrata}) for a suitable choice of Riemannian metric, and that there are thus Morse inequalities as in the Morse--Bott case. The stratum $S_C$ then coincides with $\Sigma_C$ near $C$.

\begin{rem} \label{mindisc}
Indeed suppose that $f \colon M\to \RR$ is any extremally degenerate smooth function on a compact manifold $M$. Then just as in the Morse--Bott case, Morse neighbourhoods $\mathcal{N}_{C,n}$  can be chosen to be disc bundles in the normal bundle to an arbitrarily small neighbourhood of $C$ (which retracts onto $C$) in the minimising or maximising manifold for $f$ along $C$. Here $\partial_\pm \mathcal{N}_{C,n}$ corresponds to the boundary of the disc or the boundary of the neighbourhood (depending on whether there is a minimising or maximising manifold for $f$ along $C$). Thus  $P_t(N_{C,\infty},\partial_- N_{C,\infty}) = t^{\lambda_f(C)} P_t(C)$ where $\lambda_f(C) 
$ is the codimension (respectively dimension) of the minimising (respectively maximising) manifold along $C$. From
Theorem \ref{mainthm}, if the normal bundles are orientable, we obtain Morse inequalities of the classical form (\ref{Morseineq}).
\end{rem}
    
    When $f= |\!|\mu|\!|^2$ 
 is    the normsquare of a moment map for a Hamiltonian action of a compact Lie group $K$ on a compact symplectic manifold then $f$ is minimally degenerate and
    there are also $K$-equivariant Morse inequalities. It was shown in \cite{K} that these are in fact equalities: the normsquare of the moment map is equivariantly perfect. This was then used to obtain inductive formulas for the Betti numbers of symplectic quotients, and of quotient varieties arising in algebraic geometry.
In their fundamental paper \cite{AB} Atiyah and Bott considered an infinite-dimensional version of the normsquare of a moment map given by the Yang--Mills functional arising in gauge theory over a compact Riemann surface. They conjectured that the Yang--Mills functional should induce an equivariantly perfect Morse stratification on an infinite-dimensional space of connections, but avoided the analytical difficulties created by the degeneracy of the functional, combined with the infinite-dimensionality, by using an alternative construction of the stratification. Their conjecture was proved by Daskalopoulos in \cite{Dask} and extended by Witten in \cite{Witten92} to obtain intersection pairings.

\section{Further extensions} \label{sec:applications}

We have associated to a compact Riemannian manifold $M$ with smooth  $f:M \to \RR$,  whose critical locus $\text{Crit}(f)$ has finitely many connected components, a quiver $\Gamma$ (see Definition \ref{6.1below}) and a refined quiver $\Gamma'$ (see Definition \ref{defnquiverrefined}). 
In addition given a levelling function (see Definition \ref{levelling}) such as the one provided by the Morse function itself, we obtain a spectral sequence whose $E_1$ page is a level 1 graded multicomplex supported on $\Gamma$ (in the sense of Definition \ref{defngamma}) involving the relative homology $H_*(\mathcal{N}_{C,n}, \partial_-\mathcal{N}_{C,n};\FF)$ of Morse neighbourhoods, and which abuts to the homology $H_*(M;\FF)$ of $M$. Using this (and in other ways) we have obtained Morse inequalities relating the dimensions of these homology groups. 

More generally we can define a Morse cobordism $\mathcal{M}$ as follows (cf.\! Definition \ref{Mn}), and extend our arguments to obtain similar descriptions for the relative homology $H_*(\mathcal{M},\partial_-\mathcal{M};\FF)$.

\begin{defn} \label{defnmc}
We will say that a {\it Morse cobordism} in dimension $n$ is given by a compact manifold $\mathcal{M}$  with corners (locally modelled on $[0,\infty) ^2 \times \RR^{\dim M -2}$) and a Riemannian metric $g$, together with a smooth function $f:M \to \RR$,
such that 

(a) $\mathcal{M}$ has boundary
$\partial \mathcal{M} = \partial_+ \mathcal{M} \cup \partial_- \mathcal{M} \cup \partial_\perp \mathcal{M}$
where $\partial_+ \mathcal{M}$ , $\partial_- \mathcal{M}$ and $\partial_\perp \mathcal{M}$ are compact submanifolds of $\mathcal{M}$ with boundaries whose connected components form the corners of $\mathcal{M}$, and $ \partial_+ \mathcal{M} \cap \partial_- \mathcal{M} \cap \partial_\perp \mathcal{M} = \emptyset$;

(b) $f$ is locally constant on
$   (\partial_+ \mathcal{M}) \cap  (\partial_- \mathcal{M}) \, = \, \partial(\partial_+\mathcal{M}) \, \cap \, \partial(\partial_- \mathcal{M}), $  and on each of
$ \partial(\partial_\pm \mathcal{M}) \, \cap \, \partial(\partial_\perp \mathcal{M}) \, = \, (\partial_\pm \mathcal{M}) \cap  (\partial_\perp \mathcal{M})  
;$

(c)  the critical locus $\text{Crit}(f)$ has finitely many connected components, and every critical point $p$ of $f$ on $\mathcal{M}$ lies in the interior of $\mathcal{M}$ and satisfies
$$f(x) \leqslant f(p) \leqslant f(y)$$
for all $x \in \partial_- \mathcal{M}$ and $y \in \partial_+ \mathcal{M}$, with strict inequalities when $x \in (\partial_- \mathcal{M})^\circ$ and $  y \in (\partial_+ \mathcal{M})^\circ$;

(d) the gradient vector field $\grad(f)$ on $\mathcal{M}$ associated to its Riemannian metric $g$  satisfies\\
(i) 
 the restriction of $\grad (f)$ to $(\partial_+ \mathcal{M})^\circ $ points inside $\mathcal{M}$;  \\
(ii) 
 the restriction of $\grad (f)$ to $(\partial_- \mathcal{M})^\circ$ points outside $\mathcal{M}$;
\\
(iii) $\partial_\perp \mathcal{M}$ is invariant 
under the gradient flow $\grad (f)$ (where defined);

(e) each connected component $Q$ of $\partial_\perp \mathcal{M}$ has a diffeomorphism 
$$ \Psi_Q: (Q \cap \partial_+\mathcal{M}) \times [b_-^Q,b_+^Q] \cong 
Q$$
whose composition with $f$ is projection onto the interval $ [b_-^Q,b_+^Q]$, such that $\Psi_Q(x,d)$ is given by the intersection of the gradient flow from $x$ with $f^{-1}(d)$, and in particular $Q \cap \partial_+\mathcal{M}$ and $Q \cap \partial_-\mathcal{M} = \Psi_Q((Q \cap \partial_+\mathcal{M})\times \{b_-^Q\}) \cong Q \cap \partial_+\mathcal{M}$ are corners of $\mathcal{M}$. 
\end{defn}

\begin{defn} \label{defnquiver3}
The quiver  associated to a Morse cobordism $\mathcal{M}$  as above  has  vertices of three types: a vertex $v_C$ for each connected component $C$ of the critical locus $\text{Crit}(f)$ of $f$ (on the interior of $\mathcal{M}$), a vertex $u_{G}$ for every connected component $G$ of $\partial_+\mathcal{M}$ and a vertex $w_{H}$ for every connected component $H$ of $\partial_-\mathcal{M}$. There is an arrow from one vertex (labelled by $K=C,G$ or $H$) to another (labelled by $K'=C',G'$ or $H'$) for each connected component of 
$$\{ x \in M :  \mbox{the trajectory of $x$ under $\grad(f)$ flows up to $K$ and down to $K'$} \}$$
when this subset has closed intersection with $f^{-1}(\sup\! f(K'),\inf\! f(K))$.
\end{defn}

A Morse neighbourhood $\mathcal{N}_{C,n}$ for a smooth function $f:M\to \RR$ whose critical locus $\text{Crit}(f)$ has finitely many connected components on a compact Riemannian manifold $M$, equipped with the restriction of $f$ and of the Riemannian metric, is a Morse cobordism in the sense of Definition \ref{defnmc}. Moreover any Morse cobordism $\mathcal{M}$ can be decomposed into a union of Morse neighbourhoods and Morse cobordisms which are trivial in the sense that there are no (interior) critical points, meeting along parts of their boundaries (in a way reminiscient of TQFTs). Given a levelling function generalising Definition \ref{levelling} (for example using the critical values of $f$ as in Example \ref{examplelevelling}) we obtain a spectral sequence whose $E_1$ page is a level 1 graded multicomplex supported on the quiver given as in Definition \ref{defnquiver3} which abuts to  the relative homology $H_*(\mathcal{M},\partial_-\mathcal{M};\FF)$.

\begin{rem}
Our arguments also extend to work for equivariant homology, and to obtain generalisations of the Novikov inequalities for a closed 1-form on $M$ (cf. \cite{BF,BF2,BF3,BF4,BS, Novikov, Novikov2, Pazhitnov}). 
\end{rem}

\end{document}